\documentclass[a4paper,11pt,twoside]{article}
\usepackage{geometry}
\usepackage{fancyhdr}
\usepackage{graphicx}
\usepackage{amssymb}
\usepackage{amsmath}
\usepackage{amsfonts}
\usepackage{theorem}
\usepackage{empheq}
\usepackage{mathrsfs}
\usepackage{mathtools}
\usepackage{bm}
\usepackage{color}
\usepackage{setspace}
\usepackage{exscale}
\usepackage{relsize}
\usepackage{float}
\usepackage{cite}
\usepackage{nicefrac}
\usepackage{setspace}
\usepackage{algorithm,algorithmic}
\usepackage{tikz}
\usetikzlibrary{positioning}
\usepackage{xcolor}
\usepackage{rotating}
\usepackage{enumerate}
\usepackage{hyperref}
\usepackage{authblk}
\usepackage{textcomp}
\usepackage[title]{appendix}
\usepackage{cleveref}
\usepackage{cancel}

\usepackage{booktabs, multirow, threeparttable}
\usepackage{paralist} 
\usepackage{verbatim} 
\usepackage{subfigure} 

\newtheorem{theorem}{Theorem}[section]

{\theorembodyfont{\rmfamily}\newtheorem{example}{Example}[section]}
{\theorembodyfont{\rmfamily}}
\newtheorem{rmk}{Remark}[section]

\newenvironment{proof}{{\flushleft \bf Proof:}}{\hfill$\square$\par}

\newenvironment{acknowledgment}{{\flushleft \bf Acknowledgment:}}{}

\allowdisplaybreaks[1]

\numberwithin{equation}{section}
\numberwithin{figure}{section}
\numberwithin{table}{section}

\usepackage{caption}
\newcommand{\hf}{\frac{1}{2}}
\newcommand{\xL}{x_{j-\frac{1}{2}}}
\newcommand{\xR}{x_{j+\frac{1}{2}}}
\newcommand{\yL}{y_{k-\frac{1}{2}}}
\newcommand{\yR}{y_{k+\frac{1}{2}}}
\newcommand{\jL}{j-\frac{1}{2}}
\newcommand{\jR}{j+\frac{1}{2}}
\newcommand{\kL}{k-\frac{1}{2}}
\newcommand{\kR}{k+\frac{1}{2}}

\newcommand{\bnabla}{\bm{\nabla}}
\newcommand{\eps}{\varepsilon}
\newcommand{\dt}{\Delta t}
\newcommand{\dx}{\Delta x}
\newcommand{\dy}{\Delta y}
\newcommand{\bx}{\bm{x}}
\newcommand{\bn}{\bm{n}}
\newcommand{\bv}{\bm{v}}
\newcommand{\bU}{\bm{U}}
\newcommand{\bV}{\bm{V}}
\newcommand{\bF}{\bm{F}}
\newcommand{\bG}{\bm{G}}
\newcommand{\bS}{\bm{S}}
\newcommand{\bQ}{\bm{Q}}

\newcommand{\g}{\textsl{g}}
\newcommand{\Ro}{\mathrm{Ro}}
\newcommand{\Bu}{\mathrm{Bu}}
\renewcommand{\d}{\mathrm{d}}

\newcommand\eref[1]{(\ref{#1})}

\newcommand*\xbar[1]{%
	\hbox{%
		\vbox{%
			\hrule height 0.5pt 
			\kern0.3ex
			\hbox{%
				\kern-0.05em
				\ensuremath{#1}%
				\kern-0.02em
			}%
		}%
	}%
}

\graphicspath{{Figures/}}

\begin{document}
	
\title{An Asymptotic-Preserving Dual Formulation Finite-Volume Method for the Thermal Rotating Shallow Water Equations}
\author{Alina Chertock\thanks{Department of Mathematics, North Carolina State University, Raleigh, NC 27695, USA;
{\tt chertock@math.ncsu.edu}}, Alexander Kurganov\thanks{Department of Mathematics and Shenzhen International Center for Mathematics,
Southern University of Science and Technology, Shenzhen, 518055, China; {\tt alexander@sustech.edu.cn}}, Lorenzo
Micalizzi\thanks{Department of Mathematics, North Carolina State University, Raleigh, NC 27695, USA; {\tt lmicali@ncsu.edu}}, and Nan
Zhang\thanks{Department of Mathematics, Southern University of Science and Technology, Shenzhen, 518055, China;
{\tt zhangn@sustech.edu.cn}}}
\date{}
\maketitle

\begin{abstract}
We propose a new second-order asymptotic-preserving (AP) dual formulation finite-volume (DF-FV) method for the thermal rotating shallow
water (TRSW) equations. The TRSW system models geophysical flows characterized by horizontal temperature/density variations, exhibiting
multi-scale dynamics due to the coexistence of fast rotational waves and slower advective processes. To efficiently address challenges
associated with the multiscale nature of the TRSW system, we follow the DF-FV framework and develop a DF-FV method, in which both the
conservative and nonconservative (primitive) forms of the equations are simultaneously solved, allowing the method to exploit the
complementary strengths of each representation across different flow regimes. The primitive formulation is better suited for preserving the
correct asymptotic behavior in nearly thermal quasi-geostrophic (TQG) regimes characterized by a low Rossby number, while the conservative
formulation is essential for robust shock capturing in high-Rossby-number regimes, in which nonconservative discretizations may fail to
converge to physically relevant weak solutions.

In order to achieve an AP property of the primitive system discretization, we introduce a special hyperbolic splitting to separate nonstiff
and stiff parts of the TRSW system. The nonstiff terms are then discretized explicitly using the path-conservative central-upwind approach,
while the stiff terms are approximated in a semi-implicit manner using central differences in space: this way, no restrictive time-step
constraints are to be imposed. The conservative system, on the other hand, is discretized explicitly using the central-upwind scheme, which
is stable in low-Rossby-number regimes only. The AP property and stability of the resulting DF-FV method are enforced by applying a
post-processing coupling that recovers the correct TQG dynamics in the zero-Rossby-number limit while preserving shock speeds and preventing
spurious oscillations in high-Rossby-number regimes. Importantly, the semi-implicit discretization requires only the solution of linear
well-posed elliptic problems, avoiding nonlinear solvers and reducing computational cost. Numerical experiments demonstrate that the
resulting AP DF-FV method is uniformly accurate across a broad range of flow regimes, efficient to implement, and robust in capturing both
asymptotic limits and nonlinear wave phenomena.
\end{abstract}

\smallskip
\noindent
\textbf{Key words:} Thermal rotating shallow water equations; thermal quasi-geostrophic limit; asymptotic-preserving schemes; all Rossby
numbers; hyperbolic splitting; semi-implicit methods.

\medskip
\noindent
\textbf{AMS subject classification:} 76M12, 65M08, 65L04, 86-08, 35B40, 35L60.

\section{Introduction}
In geophysical fluid dynamics, particularly for large-scale atmospheric and oceanic flows, rotating shallow water (RSW) equations serve as 
fundamental models. They capture essential features of geophysical flows under the influence of the Coriolis force induced by the Earth's 
rotation; see, e.g., \cite{Pedlosky_1987,Zeitlin_2018}. However, the classical RSW equations neglect horizontal variations in
temperature/density, which can play a crucial role in stratified flows. An improvement in this respect is thermal rotating shallow water 
(TRSW) model, which was multiply reinvented both in the meteorological and oceanographic literature, in the context of the boundary layer in
the atmosphere \cite{Lavoie_1972,Salby_1989}, and of the mixed layer in the ocean \cite{McCreary_1993,Ripa_1995,Young_1994}. It was later
applied to planetary atmospheres \cite{Cho_2008,Warneford_2014}, and was rediscovered again in the context of testing the general
circulation models \cite{Zerroukat_2015}.

The TRSW equations are particularly relevant when horizontal scales exceed vertical scales by orders of magnitude, as in weather systems 
spanning thousands of kilometers horizontally but only tens of kilometers vertically; see, e.g., \cite{Majda_2003,Zeitlin_2018}. The
dynamics of such systems are characterized by several nondimensional parameters. Among them, the Rossby number plays a central role: it
measures the ratio of the rotational to the advective time scales. When the Rossby number is large (order one), rotational effects are weak,
and the dynamics are dominated by compressible, wave-like phenomena. In contrast, when the Rossby number is small, rotation strongly
constrains the flow, resulting in slow, balanced dynamics that are well described by quasi-geostrophic models. In this regime, the TRSW
equations reduce to the thermal quasi-geostrophic (TQG) system, which provides a simplified yet accurate description of large-scale balanced
flows.

This multiscale nature of the TRSW equations creates severe numerical difficulties. Explicit schemes  for such equations
\cite{Eldred_2019,Kurganov_2020,Kurganov_2021,Ricardo_2024,Tambyah_2025}, while effective in capturing shocks and discontinuities in
high-Rossby-number regimes, become prohibitively expensive in low-Rossby-number regimes, as they require extremely fine grids and very small
time steps to resolve the fast inertial oscillations imposed by rotation. Fully implicit methods can, in principle, circumvent these
restrictions, but they lead to large, nonlinear systems that are costly to solve and may lack robustness, leading to convergence to the
wrong limiting solutions. To overcome this difficulty, hybrid approaches such as implicit-explicit (IMEX)
\cite{Ascher_1997,Boscarino_2013,Boscarino_2018,Hu_2025,Kennedy_2003} and semi-implicit (SI)
\cite{Boscarino_2016,Boscarino_2022,Chertock_2015} methods have been developed. By treating stiff terms (semi-)implicitly and nonstiff terms
explicitly, IMEX and SI schemes provide a balance between stability and efficiency. However, in strongly multiscale systems like the TRSW
one, simply stabilizing the stiff terms is not enough: numerical methods must also reproduce the correct asymptotic behavior of the system
as the Rossby number vanishes. This requirement motivates the design of asymptotic-preserving (AP) schemes, which have been developed for a
variety of nonlinear hyperbolic partial differential equations including kinetic models 
\cite{Jin_1999,Filbet_2010,Dimarco_2013,Hu_2017,Abgrall_2020,Klar_1999,Zhang_2023,Anandan_2024}, compressible Euler equations 
\cite{Allegrini_2025,Boscarino_2019,Boscarino_2022,Boscheri_2020,CKKM, Degond_2007,Dimarco_2017,Haack_2012,Huang_2025,Klein_1995,Zeifang_2020},
shallow water equations \cite{Bispen_2014,Busto_2022,Huang_2022,Huang_2024,Liu_2020,Vater_2018,Xie_2024}, multi-layer shallow water
equations \cite{Couderc_2017,Duran_2017}, RSW equations \cite{Kurganov_2022,Liu_2019,Xie_2025,Zakerzadeh_2017,Zhang_2026}, AP methods
ensure that, as parameters, such as the Rossby (Mach, Froude) number, tend to zero, the numerical solution automatically converges to the
reduced asymptotic model without the need to either refine the mesh or reduce the size of time steps. At the same time, AP schemes remain
accurate and non-oscillatory in the opposite limit, where compressibility and shocks dominate.

In this work, we extend the AP method from \cite{CKKM} to the TRSW equations. This AP method is based on a dual formulation of the studied 
system: both the conservative and nonconservative (primitive) forms are used to evolve the numerical solution. The motivation for
considering both formulations arises from their complementary properties. In high-Rossby-number regimes, effective shock capturing requires
the conservative form of the equations, since discretizations of nonconservative forms may fail to converge to the physically relevant weak
solutions; see, e.g., \cite{Abgrall_2010,Hou_1994}. On the other hand, in low-Rossby-number regimes, numerical methods are typically based
on the nonconservative form, which is better suited for enforcing the divergence-free constraint and preserving the correct asymptotic
behavior as the Rossby (Mach, Froude) number tends to zero, where both formulations are expected to converge to the same physically
relevant limit.

The proposed second-order AP method for the TRSW equations, like the AP method from \cite{CKKM}, is constructed using the nonstaggered dual 
formulation finite-volume (DF-FV) method recently developed in \cite{CFKM} (for other recent works on the dual formulation methods, we refer
the reader to \cite{Abgrall_2023,Abgrall_2024,Abgrall_2025}. We consider the conservative and nonconservative (primitive) formulations of 
the TRSW equations, leveraging the advantages of both formulations to design a scheme that remains valid across the full spectrum of Rossby 
numbers. To achieve the AP property of the primitive system discretization, we introduce a special hyperbolic splitting to separate nonstiff
and stiff parts of the TRSW system, and employ a path-conservative central-upwind (PCCU) discretization in space
\cite{CKKM,Chu_2022,Castro_2019}, which handles nonconservative terms and captures shocks robustly, together with an SI Runge-Kutta time
integration that stabilizes the fast oscillatory dynamics with a time-step restriction independent of the Rossby number. The AP property is
enforced by solving both the conservative and primitive forms of the TRSW equations simultaneously and by applying a post-processing
coupling at each time step to recover the correct TQG dynamics in the low-Rossby limit while preserving shock speeds in high-Rossby-number
regimes. It is important to emphasize that the incorporated SI time discretization does not require solving nonlinear systems: only linear
elliptic equations are solved at each time step, which significantly reduces computational cost, while maintaining stability and accuracy.
As a result, the proposed numerical framework offers a numerical method that is uniformly accurate across different-Rossby-number regimes,
efficient to implement, and robust in capturing both asymptotic limits and nonlinear wave phenomena.

The remainder of the paper is organized as follows. In \S\ref{sec2}, we introduce the TRSW equations and revisit the TQG limit of the 
nondimensional TRSW equations. In \S\ref{sec3}, we present a hyperbolic splitting, which is used in conjunction with the SI time
discretization to design an efficient AP scheme for the augmented primitive system of the TRSW equations; this method is expected to be 
efficient and accurate in low-Rossby-number regimes. In \S\ref{sec4}, we use the DF-FV framework in order to design a numerical method
capable of capturing shocks in high-Rossby-number regimes without compromising the AP property. To this end, we develop a central-upwind 
(CU) scheme for the conservative formulation of the TRSW equations and then introduce a post-processing technique that combines the
solutions of the primitive and conservative systems in a way that results in a DF-FV method applicable across all-Rossby-number regimes. In 
\S\ref{sec5}, we present numerical experiments to demonstrate the performance of the proposed AP DF-FV method. Conclusions are provided in 
\S\ref{sec6}. 

\section{TRSW Equations and Thermal Quasi-Geostrophic Limit}\label{sec2}
We consider the two-dimensional (2-D) TRSW equations, which can be written in the form \cite{Warneford_2013,Gouzien_2017}:
\begin{equation}
\begin{aligned}
&\frac{{\rm D}\bv}{{\rm D}t}+f\bv^\perp=-\Theta\nabla h-\hf h\nabla\Theta,\\
&\frac{{\rm D}h}{{\rm D}t}+h\bnabla\!\cdot\!\bv=0,\\
&\frac{{\rm D}\Theta}{{\rm D}t}=0,
\end{aligned}
\label{2.1}
\end{equation}
where $\bx=(x,y)$ denotes the horizontal spatial coordinates, $t$ is time, $\bv(\bx,t)=(u(\bx,t),v(\bx,t))^\top$ represents the horizontal
velocity field, $\bv^\perp:=(-v,u)^\top$, $\nabla:=\big(\frac{\partial}{\partial_x},\frac{\partial}{\partial_y}\big)$, and 
$\frac{{\rm D}}{{\rm D}t}:=\frac{\partial}{\partial t}+\bv\!\cdot\!\nabla$ is the material derivative. In addition, $h(\bx,t)$ represents
the thickness of the fluid layer, $f(y)=f_0+\beta y$ is the Coriolis parameter ($f_0$ and $\beta$ are nonnegative constants; $\beta=0$
corresponds to the simplest $f$-plane approximation), and $\Theta(\bx,t)$ is the buoyancy. In the special case where $\Theta\equiv\g$ (a
constant acceleration due to gravity), the TRSW system \eref{2.1} reduces to the standard RSW equations.

The TRSW system \eref{2.1} can be rewritten in the dimensionless form. To this end, we first introduce several reference quantities. 
Specifically, $L_0$, $V_0$, and $T_0=L_0/V_0$ denote the characteristic horizontal length, velocity, and time scales of the flow,
respectively. For the vertical direction, two characteristic scales are considered: the first scale is the mean depth of the fluid layer,
denoted by $H_0$; the second scale is the mean size of the depth perturbation $\phi:=h-H_0$, denoted by $\Phi_0$. Similarly, we introduce
the buoyancy perturbation $\theta:=\Theta-\Theta_0$, where $\Theta_0$ denotes the mean characteristic buoyancy scale. Note that
$\phi=\phi(\bx,t)$ and $\theta=\theta(\bx,t)$ are functions of the spatial variable $\bx$ and time $t$. We then define the following
nondimensional variables:
\begin{equation}
\bx^{\prime}:=\frac{\bx}{L_0},\quad\bv^{\prime}:=\frac{\bv}{V_0},\quad t^{\prime}:=\frac{t}{T_0},\quad h^{\prime}:=\frac{h}{H_0},
\quad\phi^{\prime}:= \frac{\phi}{\Phi_0},\quad\Theta^{\prime}:=\frac{\Theta}{\Theta_0},\quad\theta^{\prime}:=\frac{\theta}{\theta_0},
\label{2.2}
\end{equation}
where $\theta_0$ denotes the buoyancy perturbation scale. Note that \eref{2.2} yields the following relations: 
\begin{equation}
h^{\prime}=1+\frac{\Phi_0}{H_0}\phi^{\prime},\qquad\Theta^{\prime}=1+\frac{\theta_0}{\Theta_0}\theta^{\prime}.
\label{2.3}
\end{equation}		

We now substitute \eref{2.2} into \eref{2.1}, use \eref{2.3}, omit the superscript $'$ for simplicity, and obtain the following
nondimensional form of the TRSW equations:
\begin{equation}
\begin{aligned}
&\frac{{\rm D}\bv}{{\rm D}t}+\bigg(\frac{1}{\Ro}+\bar{\beta}y\bigg)\bv^\perp=-\frac{\Bu}{\Ro^2}\left(\frac{\Phi_0}{H_0}\nabla\phi
+\frac{\theta_0}{2\Theta_0}\nabla\theta+\frac{\Phi_0\theta_0}{2H_0\Theta_0}\big(\phi\nabla\theta+2\theta\nabla\phi\big)\right),\\
&\frac{{\rm D}\phi}{{\rm D}t}+\bigg(\frac{H_0}{\Phi_0}+\phi\bigg)\bnabla\!\cdot\!\bv=0,\\
&\frac{{\rm D}\theta}{{\rm D}t}=0.
\end{aligned}
\label{2.4}
\end{equation}
Here, we have introduced the dimensionless parameters
\begin{equation}
\Ro:=\frac{V_0}{L_0f_0},\quad\bar\beta:=\beta L_0T_0,\quad\mbox{and}\quad\Bu:=\bigg(\frac{L_R}{L_0}\bigg)^2=\frac{\Theta_0H_0}{L_0^2f_0^2},
\label{2.5}
\end{equation}
where $\Ro$ is the Rossby number, which quantifies the ratio of the characteristic rotation timescale $1/f_0$ to the advective timescale
$T_0$; $\Bu$ is the Burger number; and $L_R:=\sqrt{\Theta_0H_0}/f_0$ denotes the Rossby deformation radius. When the Rossby number is small,
the solution of \eref{2.4} is expected to be in the so-called TQG limit. To remain close to a thermal geostrophic equilibrium, the pressure
forces associated with the gradients of the geostrophic streamfunction $\psi:=\phi+\theta$ must be exactly balanced by the Coriolis effects,
that is, the following relation should be satisfied:
\begin{equation}
\frac{\Bu}{\Ro}\cdot\frac{\Phi_0}{H_0}=\frac{\Bu}{\Ro}\cdot\frac{\theta_0}{2\Theta_0}=1.
\label{2.6}
\end{equation}
In addition, we assume that the Rossby deformation radius $L_R$ is comparable to the characteristic horizontal length $L_0$, that is,
\begin{equation}
\Bu={\cal O}(1).
\label{2.7}
\end{equation}

In summary, in low-Rossby-number regimes, the scaling assumptions \eref{2.6} and \eref{2.7} define the so-called TQG scaling:
\begin{equation}
\Ro=\eps\ll1,\quad\Bu=\nu={\cal O}(1),\quad\frac{\Phi_0}{H_0}=\frac{\theta_0}{2\Theta_0}=\frac{\Ro}{\Bu}={\cal O}(\eps),
\label{2.8}
\end{equation}
in which the nondimensional TRSW equations \eref{2.4} become
\begin{subequations}
\begin{align}
&\frac{{\rm D}\bv}{{\rm D}t}+\Big(\frac{1}{\eps}+\bar\beta y\Big)\bv^\perp=-\frac{1}{\eps}\nabla\psi-
\frac{1}{\nu}\big(\phi\nabla\theta+2\theta\nabla\phi\big),\label{2.9a}\\
&\frac{{\rm D}\phi}{{\rm D}t}+\Big(\phi+\frac{\nu}{\eps}\Big)\bnabla\!\cdot\!\bv=0,\label{2.9b}\\
&\frac{{\rm D}\theta}{{\rm D}t}=0,\label{2.9c}
\end{align}
\label{2.9}
\end{subequations}
which serve as a starting point for the subsequent asymptotic reduction towards the TQG equations.

Next, we take the curl of the velocity equation \eref{2.9a} and obtain
\begin{equation}
\frac{{\rm D}\omega}{{\rm D}t}+\omega\bnabla\!\cdot\!\bv+\bnabla\!\cdot\!\big(\bar\beta y\bv\big)+\frac{1}{\eps}\bnabla\!\cdot\!\bv=
\frac{1}{\nu}[\phi,\theta],
\label{2.10}
\end{equation}
where we have used the following notations and identities: $\bnabla\!\times\!\bv=(0,0,\omega)^\top$, where $\omega:=v_x-u_y$ is the
vorticity, $[\phi,\theta]:=\phi_x\theta_y-\phi_y\theta_x$ is the 2-D Jacobian operator,
\begin{equation*}
\bnabla\!\times\!\bigg(\frac{{\rm D}\bv}{{\rm D}t}\bigg)=\Big(0,0,\frac{{\rm D}\omega}{{\rm D}t}+\omega\bnabla\!\cdot\!\bv\Big)^\top,\quad
\bnabla\!\times\!\big(\bar{\beta}y\bv^\perp\big)=\Big(0,0,\bnabla\!\cdot\!\big(\bar\beta y\bv\big)\Big)^\top,\quad
\bnabla\!\times\!\big(\phi\nabla\theta\big)=\big(0,0,[\phi,\theta]\big)^\top.
\end{equation*}
We then divide the depth perturbation \eref{2.9b} by $\nu$ and subtract it from \eref{2.10} and introduce the potential vorticity (PV) 
$q:=\omega+\bar\beta y-\phi/\nu$ to obtain the PV equation
\begin{equation}
\frac{{\rm D}q}{{\rm D}t}+q\bnabla\!\cdot\!\bv=\frac{1}{\nu}[\phi,\theta].
\label{2.11}
\end{equation}
We stress that the PV variable $q$ neither explicitly depends on the small parameter $\eps$ nor exhibits any stiff dependence on it. This 
structural property renders the PV equation \eref{2.11} particularly suitable for the design of an efficient AP scheme, as will be
demonstrated in \S\ref{sec3}. 

In order to formally derive the limiting TQG equations, we assume that the variables $\bm\pi:=(u,v,\psi,\phi,\theta)^\top$ admit an
asymptotic expansion with respect to $\eps$:
\begin{equation}
\bm\pi(\bx,t)=\bm\pi^{(0)}(\bx,t)+\eps\bm\pi^{(1)}(\bx,t)+\cdots.
\label{2.12}
\end{equation}
Substituting \eref{2.12} into the nondimensional TRSW equations \eref{2.9} together with the PV equation \eref{2.11}, and collecting the
terms of order ${\cal O}(\eps^{-1})$, we first obtain from the velocity equation \eref{2.9a} the geostrophic balance
\begin{equation}
\bv^{(0)}=\nabla^\perp\big(\phi^{(0)}+\theta^{(0)}\big)=\nabla^\perp\psi^{(0)},
\label{2.13}
\end{equation}
where $\psi^{(0)}:=\phi^{(0)}+\theta^{(0)}$ denotes the leading-order geostrophic streamfunction. Recalling that
$\bnabla\!\times\!\bv=(0,0,\omega)^\top$, equation \eref{2.13} directly implies the relation 
\begin{equation}
\omega^{(0)}=\Delta\psi^{(0)}.
\label{2.14}
\end{equation}
From the depth perturbation equation \eref{2.9b}, the leading-order ${\cal O}(\eps^{-1})$ term enforces the incompressibility condition 
$\bnabla\!\cdot\!\bv^{(0)}=0$. Proceeding to the ${\cal O}(1)$ terms, the buoyancy perturbation equation \eref{2.9c} yields
\begin{equation}
\frac{{\rm D}}{{\rm D}t}\theta^{(0)}=0,
\label{2.15}
\end{equation}
indicating that the leading-order buoyancy perturbation is materially conserved. Finally, applying the same asymptotic expansion to the PV 
equation \eref{2.11} and using the incompressibility condition $\bnabla\!\cdot\!\bv^{(0)}=0$ result in
\begin{equation}
\frac{{\rm D}}{{\rm D}t}q^{(0)}-\frac{1}{\nu}\Big[\phi^{(0)},\theta^{(0)}\Big]=-q^{(0)}\bnabla\!\cdot\!\bv^{(0)}=0.
\label{2.16}
\end{equation}

In summary, combining equations \eref{2.13}--\eref{2.16}, we arrive at the TQG equations:
\begin{subequations}
\begin{align}
&\bv^{(0)}=\nabla^\perp\psi^{(0)},\quad\Delta\psi^{(0)}=\omega^{(0)},\label{2.17a}\\
&\bigg(\frac{\partial}{\partial t}+\bv^{(0)}\cdot\nabla\bigg)\theta^{(0)}=0,\label{2.17b}\\
&\bigg(\frac{\partial}{\partial t}+\bv^{(0)}\cdot\nabla\bigg)q^{(0)}=\frac{1}{\nu}\Big[\psi^{(0)},\theta^{(0)}\Big]\label{2.17c}. 
\end{align}
\label{2.17}
\end{subequations}

We would like to point out that, in order to obtain the correct asymptotic limit when transitioning from the nondimensional TRSW equations 
\eref{2.9} to the limiting TQG equations \eref{2.17}, it is essential to assume that the initial conditions are well-prepared, that is,
\begin{equation}
\begin{aligned}
&\bv(\bx,0)=\bv^{(0)}(\bx,0)+{\cal O}(\eps),&&\omega(\bx,0)=\omega^{(0)}(\bx,0)+{\cal O}(\eps),\\
&\psi(\bx,0)=\psi^{(0)}(\bx,0)+{\cal O}(\eps),&&\theta(\bx,0)=\theta^{(0)}(\bx,0)+{\cal O}(\eps),
\end{aligned}
\label{2.18}
\end{equation}
where $\bv^{(0)}(\bx,0)$, $\omega^{(0)}(\bx,0)$, and $\psi^{(0)}(\bx,0)$ satisfy the limiting TQG equations \eref{2.17a}.  

We conclude this section by reformulating the nondimensional TRSW equations \eref{2.9} in conservative form, which needs to be used in 
high-Rossby-number regimes to accurately capture shock discontinuities. Assuming the TQG scaling \eref{2.8}, the depth of the fluid layer
and buoyancy can be expressed as
\begin{equation}
h=1+\frac{\eps}{\nu}\phi,\quad\Theta=1+\frac{2\eps}{\nu}\theta,
\label{2.19}
\end{equation}
and the conservative formulation of the nondimensional TRSW equations can then be written in terms of $h$, $hu$, $hv$, and $h\Theta$ as the 
following the hyperbolic system of balance laws:
\begin{equation}
\begin{aligned}
&h_t+(hu)_x+(hv)_y=0,\\
&(hu)_t+\left(hu^2+\frac{\nu}{2\eps^2}\,\Theta h^2\right)_x+(huv)_y=\frac{1+\eps\bar\beta y}{\eps}\,hv,\\
&(hv)_t+(huv)_x+\left(hv^2+\frac{\nu}{2\eps^2}\,\Theta h^2\right)_y=-\frac{1+\eps\bar\beta y}{\eps}\,hu,\\
&(h\Theta)_t+(hu\Theta)_x+(hv\Theta)_y=0.
\end{aligned}
\label{2.20}
\end{equation}
\begin{rmk}
We emphasize that the primitive \eref{2.9} and conservative \eref{2.20} formulations are equivalent for smooth solutions only. For 
discontinuous solutions that may develop in high-Rossby-number regimes, only the conservative formulation is valid.
\end{rmk}

\section{Semi-Implicit AP Scheme for the Primitive System}\label{sec3}
In this section, we develop an efficient AP scheme for the primitive system \eref{2.9}. To ensure the asymptotic convergence to the TQG 
equations \eref{2.17}, we would need to evolve not only $\bv$, $\phi$, and $\theta$, but also $q$. This idea is inspired by the approach 
proposed in \cite{Abgrall_2023}, which has recently been utilized to develop AP schemes for the magnetohydrodynamics \cite{Chen_2024}, 
compressible Euler \cite{Huang_2025}, and RSW \cite{Zhang_2026} equations.

We add the PV equation \eref{2.11} to \eref{2.9} and consider the following \emph{augmented primitive system}:
\begin{equation}
\left\{\begin{aligned}
&\bv_t+\bv\cdot\nabla\bv+\bigg(\frac{1}{\eps}+\bar\beta y\bigg)\bv^\perp=-\frac{1}{\eps}\nabla\big(\phi+\theta\big)-
\frac{1}{\nu}\big(\phi\nabla\theta+2\theta\nabla\phi\big),\\
&\phi_t+\bv\cdot\nabla\phi+\Big(\phi+\frac{\nu}{\eps}\Big)\bnabla\!\cdot\!\bv=0,\\
&\theta_t+\bv\cdot\nabla\theta=0,\\
&q_t+\bv\cdot\nabla q+q\bnabla\!\cdot\!\bv=\frac{1}{\nu}[\phi,\theta].
\end{aligned}\right.
\label{3.1}
\end{equation}
In the vector form, \eref{3.1} can be written as
\begin{equation}
\bV_t+B(\bV)\bV_x+C(\bV)\bV_y+\bQ(\bV)=\bm0,
\label{3.2}
\end{equation}
where $\bV:=(u,v,\phi,\theta,q)^\top$ is the vector of augmented primitive variables, 
\begin{equation}
B(\bV)=\begin{pmatrix}
u&0&\dfrac{\Theta}{\eps}&\dfrac{h}{\eps}&0\\0&u&0&0&0\\\nu\dfrac{h}{\eps}&0&u&0&0\\0&0&0&u&0\\q&0&0&0&u\\
\end{pmatrix}\!,~~
C(\bV)=\begin{pmatrix}
v&0&0&0&0\\0&v&\dfrac{\Theta}{\eps}&\dfrac{h}{\eps}&0\\0&\nu\dfrac{h}{\eps}&v&0&0\\0&0&0&v&0\\0&q&0&0&v
\end{pmatrix}\!,~~
\bQ(\bV)=\begin{pmatrix}
-\Big(\dfrac{1}{\eps}+\bar{\beta}y\Big)v\\[1.5ex]\Big(\dfrac{1}{\eps}+\bar{\beta}y\Big)u\\0\\0\\-\dfrac{1}{\nu}[\phi,\theta]
\end{pmatrix}.
\label{3.3}
\end{equation}
Note that we have used the identities \eref{2.19} to obtain the matrices $B$ and $C$ in \eref{3.3}.

The augmented primitive system \eref{3.2}--\eref{3.3} is hyperbolic and its eigenvalues in direction $\bn$ are
$\lambda_1=\bv\cdot\bn-c_s/\eps$, $\lambda_{2,3,4}=\bv\cdot\bn$, and $\lambda_5=\bv\cdot\bn+c_s/\eps$ with $c_s=\sqrt{\nu h\Theta}$. Since
the characteristic speeds $\lambda_{1,5}$ scale inversely with the Rossby number $\eps$, an explicit time discretization would impose a
severe time-step restriction. To address this issue, we first decompose the fluxes into two components associated with the slow (nonstiff)
and fast (stiff) dynamics, and then employ an SI Runge-Kutta method to effectively eliminate the stiff time-step limitation.

The remainder of this section is organized as follows. In \S\ref{sec31}, we begin with a hyperbolic splitting to identify the corresponding 
slow and fast dynamics. In \S\ref{sec32}, a first-order SI Runge-Kutta time discretization is applied to the split augmented primitive
system to ensure the AP property. In \S\ref{sec33}, we describe the PCCU spatial discretization for the nonstiff terms. Finally, in
\S\ref{sec34} and \S\ref{sec35}, we present fully discrete first- and second-order SI PCCU schemes, respectively.
		
\subsection{Hyperbolic Splitting}\label{sec31}
The system \eref{3.2}--\eref{3.3} can be split into the nonstiff and stiff parts as follows:
\begin{equation}
\bV_t+\widetilde{B}(\bV)\bV_x+\widetilde{C}(\bV)\bV_y+\widetilde{\bQ}(\bV)+\frac{1}{\eps}\Big(\widehat{B}(\bV)\bV_x
+\widehat{C}(\bV)\bV_y+\widehat{\bQ}(\bV)\Big)=\bm0,
\label{3.4}
\end{equation}
where 
\begin{equation}
\begin{aligned}
&\widetilde{B}(\bV)=\begin{pmatrix}
u&0&\dfrac{\Theta-b(t)}{\eps}&\dfrac{h-b(t)}{\eps}&0\\0&u&0&0&0\\\nu\dfrac{h-a(t)}{\eps}&0&u&0&0\\0&0&0&u&0\\q&0&0&0&u
\end{pmatrix},\\[1ex]
&\widetilde{C}(\bV)=\begin{pmatrix}
v&0&0&0&0\\0&v&\dfrac{\Theta-b(t)}{\eps}&\dfrac{h-b(t)}{\eps}&0\\0&\nu\dfrac{h-a(t)}{\eps}&v&0&0\\0&0&0&v&0\\0&q&0&0&v
\end{pmatrix},
\end{aligned}
\label{3.5}
\end{equation}
and
\begin{equation}
\widetilde{\bQ}(\bV)=
\left(-\Big(\frac{1-b(t)}{\eps}+\bar{\beta}y\Big)v,\Big(\frac{1-b(t)}{\eps}+\bar{\beta}y\Big)u,0,0,-\frac{1}{\nu}[\phi,\theta]\right)^\top,
\label{3.6}
\end{equation}
are the nonstiff components, while the stiff components are represented by
\begin{equation}
\widehat{B}(\bV)=\begin{pmatrix}
0&0&b(t)&b(t)&0\\0&0&0&0&0\\\nu a(t)&0&0&0&0\\0&0&0&0&0\\0&0&0&0&0
\end{pmatrix},\quad
\widehat{C}(\bV)=\begin{pmatrix}
0&0&0&0&0\\0&0&b(t)&b(t)&0\\0&\nu a(t)&0&0&0\\0&0&0&0&0\\0&0&0&0&0
\end{pmatrix},
\label{3.7}
\end{equation}
and
\begin{equation}
\widehat{\bQ}(\bV)=\left(-b(t)v,b(t)u,0,0,0\right)^\top.
\label{3.8}
\end{equation}
In order to ensure that the subsystem $\bV_t+\widetilde B(\bV)\bV_x+\widetilde C(\bV)\bV_y+\widetilde{\bQ}(\bV)=\bm0$ is both nonstiff and
hyperbolic, we need to choose the parameters $a(t)$ and $b(t)$ in an appropriate way. To this end, we first compute the eigenvalues of the
matrices $\widetilde B(\bV)$ and $\widetilde C(\bV)$, which are
\begin{equation}
\left\{u\pm\frac{1}{\eps}\sqrt{\nu\big(h-a(t)\big)\big(\Theta-b(t)\big)},u,u,u\right\}\quad\mbox{and}\quad
\left\{v\pm\frac{1}{\eps}\sqrt{\nu\big(h-a(t)\big)\big(\Theta-b(t)\big)},v,v,v\right\},
\label{3.9}
\end{equation}
respectively. A natural choice for the parameters $a(t)$ and $b(t)$ is then
\begin{equation}
a(t)=(1-\eps)\!\min_{(x,y)\in\Omega}\!h(x,y,t),\quad b(t)=(1-\eps)\!\min_{(x,y)\in\Omega}\!\Theta(x,y,t),
\label{3.10f}
\end{equation}
where $\Omega\in\mathbb R$ is a bounded domain. Indeed, using the asymptotic expansion \eref{2.12} for $\phi$ and $\theta$ and the scaling
relations for $h$ and $\Theta$ in \eref{2.19}, we obtain that $h-a(t)={\cal O}(\eps)$ and $\Theta-b(t)={\cal O}(\eps)$. Therefore, the
eigenvalues in \eref{3.9} remain real and uniformly bounded with respect to $\eps$, ensuring that the nonstiff subsystem can be treated 
explicitly without imposing any restrictive time-step constraints, while the stiff subsystem should be discretized in a (semi-)implicit
manner.  

\subsection{First-Order SI Time Discretization}\label{sec32}
We start with a first-order SI time discretization for \eref{3.4}--\eref{3.8} written in the component form:
\begin{subequations}
\begin{align}
&\frac{\bv^{n+1}-\bv^n}{\dt}+\bm{{\cal R}}^{\bv,n}+\frac{b^n}{\eps}\nabla\psi^{n+1}+\frac{b^n}{\eps}\big(\bv^{n+1}\big)^\perp=\bm0,
\label{3.10a}\\
&\frac{\phi^{n+1}-\phi^n}{\dt}+{\cal R}^{\phi,n}+\nu\,\frac{a^n}{\eps}\bnabla\!\cdot\!\bv^{n+1}=0,\label{3.10b}\\
&\frac{\theta^{n+1}-\theta^n}{\dt}+{\cal R}^{\theta,n}=0,\label{3.10c}\\
&\frac{q^{n+1}-q^n}{\dt}+{\cal R}^{q,n}=0,\label{3.10d}
\end{align}
\label{IMEX1}
\end{subequations}
where the superscripts $n$ and $n+1$ denote the data at time levels $t^n$ and $t^{n+1}=t^n+\dt$, respectively, $a^n:=a(t^n)$ and
$b^n:=b(t^n)$ are defined in \eref{3.10f}, and $\bm{{\cal R}}^n:=((\bm{{\cal R}}^{\bv,n})^\top,{\cal R}^{\phi,n},{\cal R}^{\theta,n},
{\cal R}^{q,n})^\top$ are the following nonstiff terms:
\begin{equation}
\begin{aligned}
\bm{{\cal R}}^{\bv,n}&:=\bv^n\cdot\nabla\bv^n+\frac{\Theta^n-b^n}{\eps}\nabla\phi^n+\frac{h^n-b^n}{\eps}\nabla\theta^n+
\frac{1+\eps\bar{\beta}y-b^n}{\eps}(\bv^n)^\perp,\\
{\cal R}^{\phi,n}&:=\bv^n\cdot\nabla\phi^n+\nu\,\frac{h^n-a^n}{\eps}\bnabla\!\cdot\!\bv^n,\\
{\cal R}^{\theta,n}&:=\bv^n\cdot\nabla\theta^n,\qquad{\cal R}^{q,n}:=\bv^n\cdot\nabla q^n+q^n\bnabla\!\cdot\!\bv^n-
\frac{1}{\nu}[\phi^n,\theta^n],
\end{aligned}
\label{3.11}
\end{equation}
which are discretized in \eref{IMEX1} explicitly. 

In the SI framework, the solution of the scheme \eref{IMEX1} is updated in the following three steps.

\smallskip
\noindent
$\bullet\,$ \textbf{Step 1 (Update $\theta$ and $q$).} We solve the nonstiff equations \eref{3.10c}--\eref{3.10d} to obtain 
$$
\theta^{n+1}=\theta^n-\dt{\cal R}^{\theta,n},\quad q^{n+1}=q^n-\dt{\cal R}^{q,n}.
$$

\smallskip
\noindent
$\bullet\,$ \textbf{Step 2 (Solve the linear elliptic equation for $\psi$ and update $\phi$).} We take the divergence of \eref{3.10a} to
obtain
\begin{equation*}
\bnabla\!\cdot\!\bv^{n+1}=\bnabla\!\cdot\!\bv^n-\dt\bnabla\!\cdot\!\bm{{\cal R}}^{\bv,n}-\frac{b^n\dt}{\eps}\Delta\psi^{n+1}
+\frac{b^n\dt}{\eps}\omega^{n+1},
\end{equation*}
use the updated values $\theta^{n+1}$ and $q^{n+1}$ to compute
$$
\omega^{n+1}=q^{n+1}-\bar{\beta}y+\frac{\phi^{n+1}}{\nu}=q^{n+1}-\bar{\beta}y+\frac{\psi^{n+1}-\theta^{n+1}}{\nu},
$$
and substitute it into the last equation, which can be rewritten as
\begin{equation}
\bnabla\!\cdot\!\bv^{n+1}=\bnabla\!\cdot\!\bv^n-\dt\bnabla\!\cdot\!\bm{{\cal R}}^{\bv,n}+\frac{b^n\dt}{\eps}\bigg(q^{n+1}-\bar{\beta}y-
\frac{\theta^{n+1}}{\nu}\bigg)+\frac{b^n\dt}{\eps\nu}\psi^{n+1}-\frac{b^n\dt}{\eps}\Delta\psi^{n+1}.
\label{3.12}
\end{equation}
We then add \eref{3.10b} and \eref{3.10c}, denote by ${\cal R}^{\psi,n}:={\cal R}^{\phi,n}+{\cal R}^{\theta,n}$, and obtain an evolution
equation for the streamfunction:
\begin{equation*}
\frac{\psi^{n+1}-\psi^n}{\dt}+{\cal R}^{\psi,n}+\nu\frac{a^n}{\eps}\bnabla\!\cdot\!\bv^{n+1}=0,
\end{equation*}
into which we substitute \eref{3.12} and arrive at the following linear elliptic equation for $\psi^{\,n+1}$:
\begin{equation}
\big(\eps^2+a^nb^n(\dt)^2\big)\psi^{n+1}-\nu a^nb^n(\dt)^2\Delta\psi^{n+1}={\cal S}^n,
\label{3.13}
\end{equation}
where the source term,
\begin{equation}
\begin{aligned}
{\cal S}^n=&-a^nb^n(\dt)^2\big(\nu q^{n+1}-\nu\bar{\beta}y-\theta^{n+1}\big)\\
&-\eps\nu a^n\dt\big(\bnabla\!\cdot\!\bv^n-\dt\bnabla\!\cdot\!\bm{{\cal R}}^{\bv,n}\big)+\eps^2\big(\psi^n-\dt{\cal R}^{\psi,n}\big),
\end{aligned}
\label{3.14}
\end{equation}
is computed using the quantities already available at the time level $t=t^n$. After solving \eref{3.13}--\eref{3.14} for $\psi^{n+1}$, the
depth perturbation is given by $\phi^{n+1}=\psi^{n+1}-\theta^{n+1}$.

\smallskip
\noindent
$\bullet\,$ \textbf{Step 3 (Update $\bv$).} We rewrite \eref{3.10a} as
\begin{equation}
\eps\bv^{n+1}+b^n\dt\big(\bv^{n+1}\big)^\perp=\eps\bv^n-\eps\dt\bm{{\cal R}}^{\bv,n}-b^n\dt\nabla\psi^{n+1},
\label{3.15}
\end{equation}
and use $\psi^{n+1}$ computed in Step 2 to obtain the linear algebraic system for $\bv^{n+1}$, which is then solved exactly to obtain
\begin{equation}
\begin{aligned}
\bv^{n+1}=\frac{1}{\eps^2+(b^n\dt)^2}\Big(\!&-\eps^2\dt\bm{{\cal R}}^{\bv,n}+\eps b^n(\dt)^2\big(\bm{{\cal R}}^{\bv,n}\big)^\perp+
\eps^2\bv^n\\
&-\eps b^n\dt\big((\bv^n)^\perp+\nabla\psi^{n+1}\big)+\big(b^n\dt\big)^2\nabla^\perp\psi^{n+1}\Big).
\end{aligned}
\label{3.16}
\end{equation}

\smallskip
We now prove that the AP property of the designed first-order SI scheme.
\begin{theorem}
Assume that the discrete initial data satisfy \eref{2.18}, namely, 
\begin{equation}
\bv^0=\bv^{(0),0}+{\cal O}(\eps),\quad\omega^0=\omega^{(0),0}+{\cal O}(\eps),\quad\psi^0=\psi^{(0),0}+{\cal O}(\eps),\quad
\theta^0=\theta^{(0),0}+{\cal O}(\eps),
\label{3.17}
\end{equation}
and, in addition, assume that $\bv^{(0),0}$, $\omega^{(0),0}$, and $\psi^{(0),0}$ satisfy the TQG equations \eref{2.17a}. Then, the SI
scheme, given by \eref{3.10c}, \eref{3.10d}, and \eref{3.13}--\eref{3.15}, is AP in the sense that its solution admits the expansions
\eref{3.17} at all time levels and the scheme provides a consistent discretization of the TQG equations \eref{2.17} as $\eps\to0$.
\end{theorem}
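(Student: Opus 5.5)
The proof will be an induction on the time level $n$. Assume that at $t^n$ the discrete solution admits the expansions \eref{3.17}: $\bv^n=\bv^{(0),n}+{\cal O}(\eps)$, $\psi^n=\psi^{(0),n}+{\cal O}(\eps)$, and so on, with $\bv^{(0),n}=\nabla^\perp\psi^{(0),n}$ and $\omega^{(0),n}=\Delta\psi^{(0),n}$. We then show that the same holds at $t^{n+1}$, and that the leading-order updates form a consistent time discretization of \eref{2.17}.

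The first step is to check that all explicit quantities are ${\cal O}(1)$. By \eref{2.19} and \eref{3.10f}, $h^n-a^n={\cal O}(\eps)$, $h^n-b^n={\cal O}(\eps)$, $\Theta^n-b^n={\cal O}(\eps)$, $1-b^n={\cal O}(\eps)$ and $a^n,b^n=1+{\cal O}(\eps)$. Hence every $1/\eps$ factor in \eref{3.11} is cancelled, and $\bm{{\cal R}}^{\bv,n}$, ${\cal R}^{\phi,n}$, ${\cal R}^{\theta,n}$ and ${\cal R}^{q,n}$ are ${\cal O}(1)$. One point needs care: ${\cal R}^{\phi,n}$ contains $\nu\frac{h^n-a^n}{\eps}\bnabla\!\cdot\!\bv^n$, which is harmless only because $\bnabla\!\cdot\!\bv^{(0),n}=0$, and this follows from $\bv^{(0),n}=\nabla^\perp\psi^{(0),n}$. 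Step~1 then gives $\theta^{n+1}=\theta^{(0),n}-\dt\,\bv^{(0),n}\!\cdot\!\nabla\theta^{(0),n}+{\cal O}(\eps)$. Similarly, $q^{n+1}$ at leading order is the forward Euler step of \eref{2.17c}, written in the form $\bv^{(0)}\!\cdot\!\nabla q^{(0)}-\frac1\nu[\phi^{(0)},\theta^{(0)}]$. This bracket equals $\frac1\nu[\psi^{(0)},\theta^{(0)}]$, since $[\theta,\theta]=0$. Thus \eref{2.17b} and \eref{2.17c} are discretized consistently.

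Next, expand the elliptic equation \eref{3.13}--\eref{3.14} in powers of $\eps$. The source ${\cal S}^n$ consists of the ${\cal O}(1)$ term $-a^nb^n(\dt)^2(\nu q^{n+1}-\nu\bar\beta y-\theta^{n+1})$ plus terms of order ${\cal O}(\eps)$ and ${\cal O}(\eps^2)$. The left-hand side at ${\cal O}(1)$ is $(\dt)^2(\psi^{(0),n+1}-\nu\Delta\psi^{(0),n+1})$. Dividing by $a^nb^n(\dt)^2\to(\dt)^2$ gives
\begin{equation*}
\Delta\psi^{(0),n+1}=q^{(0),n+1}-\bar\beta y+\frac{\psi^{(0),n+1}-\theta^{(0),n+1}}{\nu}=q^{(0),n+1}-\bar\beta y+\frac{\phi^{(0),n+1}}{\nu}=\omega^{(0),n+1}.
\end{equation*}
This is the second relation in \eref{2.17a}, namely the PV inversion. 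Well-posedness of this positive-definite Helmholtz-type problem, uniformly in $\eps$, gives $\psi^{n+1}=\psi^{(0),n+1}+{\cal O}(\eps)$ with $\psi^{(0),n+1}$ determined by $q^{(0),n+1}$ and $\theta^{(0),n+1}$.

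The last and most delicate step is the velocity. In \eref{3.16} the dominant denominator term is $(b^n\dt)^2$ and the dominant numerator term is $(b^n\dt)^2\nabla^\perp\psi^{n+1}$. All other numerator terms carry a factor $\eps$ and ${\cal O}(1)$ coefficients. Hence $\bv^{n+1}=\nabla^\perp\psi^{(0),n+1}+{\cal O}(\eps)$, which is the geostrophic balance and, via $\omega=\bnabla\!\times\!\bv$, the relation $\omega^{(0),n+1}=\Delta\psi^{(0),n+1}$. We also obtain $\bnabla\!\cdot\!\bv^{(0),n+1}=0$, which closes the induction.

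The main obstacle is rigor about the uniformity of the ${\cal O}(\eps)$ remainders. We must make sure that $\bnabla\!\cdot\!\bm{{\cal R}}^{\bv,n}$ and the term $\eps\nu a^n\dt\,\bnabla\!\cdot\!\bv^n$ in ${\cal S}^n$ stay bounded; this uses $\bnabla\!\cdot\!\bv^{(0),n}=0$ and smoothness of the expansion terms. We must also make sure that the $\eps$-dependent coefficients $a^n$ and $b^n$, which depend on the discrete minima, admit expansions themselves. I would handle this by stating the induction hypothesis with smooth expansion coefficients, so that derivatives of the remainders remain ${\cal O}(\eps)$.
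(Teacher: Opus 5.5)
Your proposal is correct and follows the paper's proof essentially step for step: induction on $n$, leading-order forward-Euler updates for $\theta$ and $q$ (using $[\psi,\theta]=[\phi,\theta]$), the ${\cal O}(1)$ part of \eref{3.13}--\eref{3.14} yielding $\Delta\psi^{(0),n+1}=\omega^{(0),n+1}$, and the velocity update yielding $\bv^{(0),n+1}=\nabla^\perp\psi^{(0),n+1}$. You read the last step off the solved form \eref{3.16} where the paper uses \eref{3.15}, which is equivalent. One small correction to your bookkeeping: the term $\nu\frac{h^n-a^n}{\eps}\bnabla\!\cdot\!\bv^n$ is already ${\cal O}(1)$ because $h^n-a^n={\cal O}(\eps)$, and ${\cal R}^{\phi,n}$ enters ${\cal S}^n$ only with a factor $\eps^2$, so it does not need $\bnabla\!\cdot\!\bv^{(0),n}=0$; that property is genuinely needed to remove $q^n\bnabla\!\cdot\!\bv^n$ from ${\cal R}^{q,n}$ at leading order, as in \eref{3.24}.
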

\begin{proof}
We proceed by mathematical induction. Given \eref{2.17a} and \eref{3.17} for the time level $t=t^0=0$, let us assume that at the time level
$t=t^n$, the numerical solution admits the same expansions 
\begin{equation}
\bv^n=\bv^{(0),n}+{\cal O}(\eps),\quad\omega^n=\omega^{(0),n}+{\cal O}(\eps),\quad\psi^n=\psi^{(0),n}+{\cal O}(\eps),\quad 
\theta^n=\theta^{(0),n}+{\cal O}(\eps),
\label{3.18}
\end{equation}
and the leading terms in \eref{3.18} satisfy \eref{2.17a}, namely,
\begin{equation}
\bv^{(0),n}=\nabla^\perp\psi^{(0),n},\quad\Delta\psi^{(0),n}=\omega^{(0),n}.
\label{3.19}
\end{equation}
From \eref{3.10c}, \eref{3.11}, and \eref{3.18}, we obtain
\begin{equation*}
\theta^{n+1}=\theta^n-\dt\bv^n\cdot\nabla\theta^n=\theta^{(0),n}-\dt\bv^{(0),n}\cdot\nabla\theta^{(0),n}+{\cal O}(\eps). 
\end{equation*}
Hence, at the next time level,
\begin{equation}
\theta^{n+1}=\theta^{(0),n+1}+{\cal O}(\eps)
\label{3.21}
\end{equation}
with 
\begin{equation}
\theta^{(0),n+1}=\theta^{(0),n}-\dt\bv^{(0),n}\cdot\nabla\theta^{(0),n}.
\label{3.22}
\end{equation}
Note that equation \eref{3.22} represents a consistent discrete approximation of \eref{2.17b}. 
		
Next, we consider \eref{3.10d} and \eref{3.11}, use $\psi^n=\phi^n+\theta^n$ and $[\psi^n,\theta^n]=[\phi^n,\theta^n]$, and arrive at
\begin{equation*}
\begin{aligned}
q^{n+1}&=q^n-\dt\Big(\bv^n\cdot\nabla q^n+q^n\bnabla\!\cdot\!\bv^n-\frac{1}{\nu}[\psi^n,\theta^n]\Big)\\
&=q^{(0),n}-\dt\bv^{(0),n}\cdot\nabla q^{(0),n}+\frac{\dt}{\nu}\big[\psi^{(0),n},\theta^{(0),n}\big]+{\cal O}(\eps),
\end{aligned}
\end{equation*}
where we have used the divergence-free property $\bnabla\!\cdot\!\bv^{(0),n}=0$, which automatically follows from the first equation in 
\eref{3.19}. Therefore,
\begin{equation}
q^{n+1}=q^{(0),n+1}+{\cal O}(\eps)
\label{3.23}
\end{equation}
with
\begin{equation}
q^{(0),n+1}=q^{(0),n}-\dt\bv^{(0),n}\cdot\nabla q^{(0),n}+\frac{\dt}{\nu}\big[\psi^{(0),n},\theta^{(0),n}\big],
\label{3.24}
\end{equation}
which provides a consistent discrete scheme for the PV equation \eref{2.17c}.  

Then, substituting the expansions \eref{3.21} and \eref{3.23} into equations \eref{3.13}--\eref{3.14}, yields
\begin{equation*}
{\cal S}^n=-a^nb^n(\dt)^2\Big(\nu q^{(0),n+1}-\nu\bar{\beta}y-\theta^{(0),n+1}\Big)+{\cal O}(\eps),
\end{equation*}
and hence,
\begin{equation}
\psi^{n+1}=\psi^{(0),n+1}+{\cal O}(\eps),
\label{3.25f}
\end{equation}
where the leading-order term $\psi^{(0),n+1}$ satisfies the following elliptic equation:
\begin{equation}
a^nb^n(\dt)^2\psi^{(0),n+1}-\nu a^nb^n(\dt)^2\Delta\psi^{(0),n+1}=
-a^nb^n(\dt)^2\left(\nu q^{(0),n+1}-\nu\bar{\beta}y-\theta^{(0),n+1}\right).
\label{3.25}
\end{equation}
Using the definitions of $q$ and $\psi$, together with the expansions \eref{3.21}, \eref{3.23}, and \eref{3.25f}, the vorticity $\omega$ at
the next time level, $t=t^{n+1}$, can be expressed as
\begin{equation*}
\omega^{n+1}=q^{n+1}-\bar{\beta}y+\frac{1}{\nu}\big(\psi^{n+1}-\theta^{n+1}\big)=q^{(0),n+1}-\bar{\beta}y+
\frac{1}{\nu}\big(\psi^{(0),n+1}-\theta^{(0),n+1}\big)+{\cal O}(\eps).
\end{equation*}
Hence, we have
\begin{equation}
\omega^{n+1}=\omega^{(0),n+1}+{\cal O}(\eps),
\label{3.27f}
\end{equation}
where the leading-order vorticity is given by
\begin{equation}
\omega^{(0),n+1}=q^{(0),n+1}-\bar{\beta}y+\frac{1}{\nu}\left(\psi^{(0),n+1}-\theta^{(0),n+1}\right).
\label{3.27}
\end{equation}
Substituting \eref{3.27} into \eref{3.25} yields the following Poisson equation for the leading-order streamfunction:
\begin{equation}
\Delta\psi^{(0),n+1}=\omega^{(0),n+1},
\label{3.29}
\end{equation}
which is the second equation in \eref{2.17a}.
		
Finally, combining \eref{3.15} with the expansion \eref{3.25f}, we obtain
\begin{equation}
\bv^{n+1}=\bv^{(0),n+1}+{\cal O}(\eps)
\label{3.30}
\end{equation}
with
\begin{equation}
\bv^{(0),n+1}=\nabla^\perp\psi^{(0),n+1},
\label{3.31f}
\end{equation}	
which is the first equation in \eref{2.17a}.
		
In summary, by combining equations \eref{3.21}, \eref{3.25f}, \eref{3.27f}, and \eref{3.30}, we conclude that the numerical solution at the 
next time level, $t=t^{n+1}$, can be expanded with respect to $\eps$ as 
\begin{equation*}
\bv^{n+1}=\bv^{(0),n+1}+{\cal O}(\eps),~~\omega^{n+1}=\omega^{(0),n+1}+{\cal O}(\eps),~~\psi^{n+1}=\psi^{(0),n+1}+{\cal O}(\eps),~~
\theta^{n+1}=\theta^{(0),n+1}+{\cal O}(\eps),
\end{equation*}
which, together with \eref{3.22}, \eref{3.24}, \eref{3.29}, and \eref{3.31f}, demonstrates that the scheme provides a consistent discrete
approximation of the TQG equations \eref{2.17} as $\eps\to0$, which completes the proof of this theorem.
\end{proof}

\subsection{Space Discretization}\label{sec33}
The temporal discretization introduced in \S\ref{sec32} guarantees the correct asymptotic limit as $\eps$ tends to zero in the semi-discrete
temporal setting. In this section, we describe a spatial discretization. We first introduce uniform finite-volume cells
$I_{j,k}=[\xL,\xR]\times[\yL,\yR]$ centered at $x_j:=\big(\xL+\xR\big)/2$ and $y_k:=\big(\yL+\yR\big)/2$, and define $\dx:=\xR-\xL$ and
$\dy:=\yR-\yL$. We assume that the cell averages $\xbar{\bV}_{j,k}^{\,n}\approx\frac{1}{\dx\dy}\iint_{I_{j,k}}\bV(x,y,t^n)\,\d x\d y$ are
available at the time level $t=t^n$, and use them to obtain a global piecewise linear reconstruction
\begin{equation*}
\widetilde{\bV}(x,y,t^n)=\xbar{\bV}_{j,k}^{\,n}+(\bV_x)_{j,k}^n(x-x_j)+(\bV_y)_{j,k}^n(y-y_k),\quad(x,y)\in I_{j,k}.
\end{equation*}
The latter allows one to compute one-sided point values of $\bm V$ at the midpoints of the cell interfaces:
\begin{equation}
\begin{aligned}
&\bV_{\jR,k}^{-,n}=\xbar{\bV}_{j,k}^{\,n}+\frac{\dx}{2}(\bV_x)_{j,k}^n,&&
\bV_{\jL,k}^{+,n}=\xbar{\bV}_{j,k}^{\,n}-\frac{\dx}{2}(\bV_x)_{j,k}^n,\\
&\bV_{j,\kR}^{-,n}=\xbar{\bV}_{j,k}^{\,n}+\frac{\dy}{2}(\bV_y)_{j,k}^n,&&
\bV_{j,\kL}^{+,n}=\xbar{\bV}_{j,k}^{\,n}-\frac{\dy}{2}(\bV_y)_{j,k}^n.
\end{aligned}
\label{3.31}
\end{equation}
In order to obtain second-order accuracy in space and prevent oscillations, the slopes in \eref{3.31} are to be computed using a nonlinear
limiter. In the numerical examples reported in \S\ref{sec5}, we have used the generalized minmod limiter (see, e.g., 
\cite{Lie_2003,Nessyahu_1990,Sweby_1984}):
\begin{equation}
\begin{aligned}
(\bV_x)_{j,k}^n&={\rm minmod}\left(\mu\,\frac{\xbar{\bV}_{j,k}^{\,n}-\xbar{\bV}_{j-1,k}^{\,n}}{\dx},\,
\frac{\xbar{\bV}_{j+1,k}^{\,n}-\xbar{\bV}_{j-1,k}^{\,n}}{2\dx},\,\mu\,\frac{\xbar{\bV}_{j+1,k}^{\,n}-\xbar{\bV}_{j,k}^{\,n}}{\dx}\right),\\
(\bV_y)_{j,k}^n&={\rm minmod}\left(\mu\,\frac{\xbar{\bV}_{j,k}^{\,n}-\xbar{\bV}_{j,k-1}^{\,n}}{\dy},\,
\frac{\xbar{\bV}_{j,k+1}^{\,n}-\xbar{\bV}_{j,k-1}^{\,n}}{2\dy},\,\mu\,\frac{\xbar{\bV}_{j,k+1}^{\,n}-\xbar{\bV}_{j,k}^{\,n}}{\dy}\right),
\end{aligned}
\label{3.32}
\end{equation}
where the minmod function, defined by
\begin{equation*}
{\rm minmod}(c_1,c_2,\ldots)=\left\{\begin{aligned}
&\min(c_1,c_2,\ldots)&&\mbox{if}~c_i>0,~\forall i,\\
&\max(c_1,c_2,\ldots)&&\mbox{if}~c_i<0,~\forall i,\\
&\,0&&\mbox{otherwise},
\end{aligned}\right.
\end{equation*}
is applied in a componentwise manner. The parameter $\mu\in[1,2]$ in \eref{3.32} is to be chosen to adjust the amount of numerical
dissipation present in the numerical scheme, with larger values of $\mu$ leading to sharper but, in general, more oscillatory
reconstructions. 
	
\subsubsection{PCCU Discretization of the Nonstiff Terms}
The nonstiff part of \eref{IMEX1} is given by
$\bm{{\cal R}}^n:=((\bm{{\cal R}}^{\bv,n})^\top,{\cal R}^{\phi,n},{\cal R}^{\theta,n},{\cal R}^{q,n})^\top$ in \eref{3.11}, and is also
presented in the vector form as $\widetilde B(\bV^n)\bV_x^n+\widetilde C(\bV^n)\bV_y^n+\widetilde{\bQ}^n(\bV^n)$; see
\eref{3.4}--\eref{3.6}. As one can see, most of the terms there contain nonconservative products and thus require proper treatment. We
discretize these terms using the second-order PCCU approach introduced in \cite{Castro_2019} (for the 2-D version of the PCCU scheme, we
refer the reader to \cite{Chu_2022}):
\begin{equation}
\begin{aligned}
\bm{{\cal R}}_{j,k}^n:=\,&\frac{1}{\dx}\Bigg(\widetilde{\bm{{\cal D}}}_{\jR,k}^n-\widetilde{\bm{{\cal D}}}_{\jL,k}^n+ 
\widetilde{\bm B}_{j,k}^{\,n}+\frac{s^{+,n}_{\jL,k}\widetilde{\bm B}_{\bm\Psi,\jL,k}^{\,n}}{s^{+,n}_{\jL,k}-s^{-,n}_{\jL,k}}
-\frac{s^{-,n}_{\jR,k}\,\widetilde{\bm B}_{\bm\Psi,\jR,k}^{\,n}}{s^{+,n}_{\jR,k}-s^{-,n}_{\jR,k}}\Bigg)\\ 
+\,&\frac{1}{\dy}\Bigg(\widetilde{\bm{{\cal D}}}_{j,\kR}^n-\widetilde{\bm{{\cal D}}}_{j,\kL}^n+\widetilde{\bm C}_{j,k}^{\,n}
+\frac{s^{+,n}_{j,\kL}\,\widetilde{\bm C}_{\bm\Psi,j,\kL}^{\,n}}{s^{+,n}_{j,\kL}-s^{-,n}_{j,\kL}}-\frac{s^{-,n}_{j,\kR}
\widetilde{\bm C}_{\bm\Psi,j,\kR}^{\,n}}{s^{+,n}_{j,\kR}-s^{-,n}_{j,\kR}}\Bigg)+\,\xbar{\widetilde{\bQ}}_{j,k}^{\,n}.
\end{aligned}
\label{3.34}
\end{equation}	
Here,
\begin{equation*}
\begin{aligned}
\widetilde{\bm{{\cal D}}}_{\jR,k}^n&=\frac{s^{+,n}_{\jR,k}s^{-,n}_{\jR,k}}{s^{+,n}_{\jR,k}-s^{-,n}_{\jR,k}}
\Big(\bV^{+,n}_{\jR,k}-\bV^{-,n}_{\jR,k}-\delta\bV_{\jR,k}^n\Big),\\
\widetilde{\bm{{\cal D}}}_{j,\kR}^n&=\frac{s^{+,n}_{j,\kR}s^{-,n}_{j,\kR}}{s^{+,n}_{j,\kR}-s^{-,n}_{j,\kR}}
\Big(\bV^{+,n}_{j,\kR}-\bV^{-,n}_{j,\kR}-\delta\bV_{j,\kR}^n\Big),
\end{aligned}
\end{equation*}
are the numerical diffusion terms with
\begin{equation*}
\begin{aligned}
&\delta\bV_{\jR,k}^n:={\rm minmod}\Big(\bV^{+,n}_{\jR,k}-\bV^{*,n}_{\jR,k},\,\bV^{*,n}_{\jR,k}-\bV^{-,n}_{\jR,k}\Big),\\
&\delta\bV_{j,\kR}^n:={\rm minmod}\Big(\bV^{+,n}_{j,\kR}-\bV^{*,n}_{j,\kR},\,\bV^{*,n}_{j,\kR}-\bV^{-,n}_{j,\kR}\Big),
\end{aligned}
\end{equation*}
where
\begin{equation*}		
\bV^{*,n}_{\jR,k}=\frac{s^{+,n}_{\jR,k}\bV^{+,n}_{\jR,k}-s^{-,n}_{\jR,k}\bV^{-,n}_{\jR,k}}{s^{+,n}_{\jR,k}-s^{-,n}_{\jR,k}},\quad
\bV^{*,n}_{j,\kR}=\frac{s^{+,n}_{j,\kR}\bV^{+,n}_{j,\kR}-s^{-,n}_{j,\kR}\bV^{-,n}_{j,\kR}}{s^{+,n}_{j,\kR}-s^{-,n}_{j,\kR}}.
\end{equation*}
In addition, $s_{\jR,k}^{\pm,n}$ and $s_{j,\kR}^{\pm,n}$ denote the one-sided local propagation speeds in the $x$- and $y$-directions, 
respectively. They are estimated using the smallest and largest eigenvalues of the matrices $\widetilde B(\bV^n)$ and $\widetilde C(\bV^n)$,
which are given in \eref{3.9}:
\begin{equation}
\begin{aligned}
&s^{+,n}_{\jR,k}=\max\left\{u^{-,n}_{\jR,k}+\Lambda^{-,n}_{\jR,k},\,u^{+,n}_{\jR,k}+\Lambda^{+,n}_{\jR,k},\,0\right\},\\
&s^{-,n}_{\jR,k}=\min\left\{u^{-,n}_{\jR,k}-\Lambda^{-,n}_{\jR,k},\,u^{+,n}_{\jR,k}-\Lambda^{+,n}_{\jR,k},\,0\right\},\\
&s^{+,n}_{j,\kR}=\max\left\{v^{-,n}_{j,\kR}+\Lambda^{-,n}_{j,\kR},\,v^{+,n}_{j,\kR}+\Lambda^{+,n}_{j,\kR},\,0\right\},\\
&s^{-,n}_{j,\kR}=\min\left\{v^{-,n}_{j,\kR}-\Lambda^{-,n}_{j,\kR},\,v^{+,n}_{j,\kR}-\Lambda^{+,n}_{j,\kR},\,0\right\},
\end{aligned}
\label{3.35f}
\end{equation}
where
\begin{equation}
\Lambda^{\pm,n}_{\jR,k}=\frac{1}{\eps}\sqrt{\nu\big(h^{\pm,n}_{\jR,k}-a^n\big)\big(\Theta^{\pm,n}_{\jR,k}-b^n\big)},\quad
\Lambda^{\pm,n}_{j,\kR}=\frac{1}{\eps}\sqrt{\nu\big(h^{\pm,n}_{j,\kR}-a^n\big)\big(\Theta^{\pm,n}_{j,\kR}-b^n\big)},
\label{3.35}
\end{equation}
and the parameters $a^n$ and $b^n$ are computed as follows (see \eref{3.10f}):
\begin{equation}
\begin{aligned}
&a^n=(1-\eps)\min\limits_{j,k}\Big\{\min\Big(h^{+,n}_{\jR,k},\,h^{-,n}_{\jR,k},\,h^{+,n}_{j,\kR},\,h^{-,n}_{j,\kR}\Big)\Big\},\\
&b^n=(1-\eps)\min\limits_{j,k}\Big\{\min\Big(\Theta^{+,n}_{\jR,k},\,\Theta^{-,n}_{\jR,k},\,\Theta^{+,n}_{j,\kR},\,\Theta^{-,n}_{j,\kR}\Big)
\Big\}.
\end{aligned}
\label{3.37}
\end{equation}
Next, the cell averages of $\widetilde{\bQ}$ are approximated using the midpoint rule, which results in 
\begin{equation*}
\xbar{\widetilde{\bQ}}_{j,k}^{\,n}=\Big(-\Big(\frac{1-b^n}{\eps}+\bar{\beta}y_k\Big)\,\xbar v_{j,k}^{\,n},\,
\Big(\frac{1-b^n}{\eps}+\bar{\beta}y_k\Big)\,\xbar u_{j,k}^{\,n},\,0,\,0,\,
-\frac{1}{\nu}\big[\,\xbar{\phi}_{j,k}^{\,n},\,\xbar{\theta}_{j,k}^{\,n}\big]\Big)^\top.
\end{equation*}
The 2-D Jacobian operator $[\phi,\theta]$ is approximated using the second-order central differences as follows:
\begin{equation*}
\big[\,\xbar{\phi}_{j,k}^{\,n},\,\xbar{\theta}_{j,k}^{\,n}\big]=\frac{1}{4\dx\dy}
\Big(\big(\,\xbar{\phi}_{j+1,k}^{\,n}-\,\xbar{\phi}_{j-1,k}^{\,n}\big)\big(\,\xbar{\theta}_{j,k+1}^{\,n}-\,\bar{\theta}_{j,k-1}^{\,n}\big)+
\big(\,\xbar{\phi}_{j,k+1}^{\,n}-\,\bar{\phi}_{j,k-1}^{\,n}\big)\big(\,\xbar{\theta}_{j+1,k}^{\,n}-\,\xbar{\theta}_{j-1,k}^{\,n}\big)\Big).
\end{equation*}
	
Note that in order to numerically solve the linear elliptic equation \eref{3.13}--\eref{3.14} in Step 2 of the time update, the terms 
$\bnabla\!\cdot\!\bv^n$ and $\bnabla\!\cdot\!\bm{{\cal R}}^{\bv,n}$ are to be discretized. First, $\bnabla\!\cdot\!\bv^n$ is approximated
using the second-order central differences:
\begin{equation}
\bnabla\!\cdot\xbar{\bv}_{j,k}^{\,n}=\frac{\xbar u_{j+1,k}^{\,n}-\,\xbar u_{j-1,k}^{\,n}}{2\dx}+
\frac{\xbar v_{j,k+1}^{\,n}-\,\xbar v_{j,k-1}^{\,n}}{2\dy}.
\label{3.38}
\end{equation}
Second, the term $\bnabla\!\cdot\!\bm{{\cal R}}^{\bv,n}=\bnabla\!\cdot\!{\cal R}^{1,n}+\bnabla\!\cdot\!{\cal R}^{2,n}+
\bnabla\!\cdot\!{\cal R}^{3,n}$ is rewritten as a sum of the three terms with
\begin{equation}
\begin{aligned}
&\bnabla\!\cdot\!{\cal R}^{1,n}:=\bar{\beta}u^n-\frac{1+\eps\bar{\beta}y-b^n}{\eps}\omega^n=\bar{\beta}u^n-
\frac{1+\eps\bar{\beta}y-b^n}{\eps}\Big(q^n-\bar{\beta}y+\frac{\phi^n}{\nu}\Big),\\	
&\bnabla\!\cdot\!{\cal R}^{2,n}:=\bnabla\!\cdot\!\big(\bv^n\!\cdot\!\nabla\bv^n\big)=\hf\Big((u^n_{xx})^2+(v^n_{yy})^2\Big)-
[u^n,v^n]+(u^nv^n)_{xy},\\
&\bnabla\!\cdot\!{\cal R}^{3,n}:=\bnabla\!\cdot\!\bigg(\frac{\Theta^n-b^n}{\eps}\nabla\phi^n\bigg)+
\bnabla\!\cdot\!\bigg(\frac{h^n-b^n}{\eps}\nabla\theta^n\bigg).
\end{aligned}
\label{3.39}
\end{equation}
The first term in \eref{3.39} can be approximated by
\begin{equation}
\bnabla\!\cdot\!{\cal R}^{1,n}_{j,k}=\bar{\beta}\,\xbar u_{j,k}^{\,n}-\frac{1+\eps\bar{\beta}y_k-b^n}{\eps}\bigg(\,\xbar q_{j,k}^{\,n}-
\bar{\beta}y_k+\frac{\xbar{\phi}_{j,k}^{\,n}}{\nu}\bigg).
\label{3.40}
\end{equation}
The second term in \eref{3.39} is approximated using the second-order central differences, which give
\begin{equation}
\begin{aligned}
\bnabla\!\cdot\!{\cal R}_{j,k}^{2,n}&=\frac{\big(\,\xbar u^{\,n}_{j-1,k}\big)^2-2\big(\,\xbar u^{\,n}_{j,k}\big)^2+
\big(\,\xbar u^{\,n}_{j+1,k}\big)^2}{2(\dx)^2}+\frac{\big(\,\xbar v^{\,n}_{j,k-1}\big)^2-2\big(\,\xbar v^{\,n}_{j,k}\big)^2+
\big(\,\xbar v^{\,n}_{j,k+1}\big)^2}{2(\dy)^2}\\
&\hspace*{-0.9cm}-\frac{1}{4\dx\dy}\Big(\big(\,\xbar u_{j+1,k}^{\,n}-\,\xbar u_{j-1,k}^{\,n}\big)
\big(\,\xbar v_{j,k+1}^{\,n}-\,\xbar v_{j,k-1}^{\,n}\big)+
\big(\,\xbar u_{j,k+1}^{\,n}-\,\xbar u_{j,k-1}^{\,n}\big)\big(\,\xbar v_{j+1,k}^{\,n}-\,\xbar v_{j-1,k}^{\,n}\big)\Big)\\
&\hspace*{-0.9cm}+\frac{1}{4\dx\dy}
\left(\,\xbar u_{j+1,k+1}^{\,n}\,\xbar v_{j+1,k+1}^{\,n}-\,\xbar u_{j-1,k+1}^{\,n}\,\xbar v_{j-1,k+1}^{\,n}
-\,\xbar u_{j+1,k-1}^{\,n}\,\xbar v_{j+1,k-1}^{\,n}+\,\xbar u_{j-1,k-1}^{\,n}\,\xbar v_{j-1,k-1}^{\,n}\right).
\end{aligned}
\label{3.41}
\end{equation}
We then discretize the third term in \eref{3.39} using a compact central difference to obtain
\begin{equation}
\begin{aligned}
\bnabla\!\cdot\!{\cal R}_{j,k}^{3,n}&=\frac{1}{2\eps(\dx)^2}\Big(\big(\,\xbar\Theta_{j,k}^{\,n}+\,\xbar\Theta_{j+1,k}^{\,n}\big)
\big(\,\xbar{\phi}_{j+1,k}^{\,n}-\,\xbar{\phi}_{j,k}^{\,n}\big)-\big(\,\xbar\Theta_{j-1,k}^{\,n}+\,\xbar\Theta_{j,k}^{\,n}\big)
\big(\,\xbar{\phi}_{j,k}^{\,n}-\,\xbar{\phi}_{j-1,k}^{\,n}\big)\Big)\\
&+\frac{1}{2\eps(\dy)^2}\Big(\big(\,\xbar\Theta_{j,k}^{\,n}+\,\xbar\Theta_{j,k+1}^{\,n}\big)\big(\,\xbar{\phi}_{j,k+1}^{\,n}
-\,\xbar{\phi}_{j,k}^{\,n}\big)-\big(\,\xbar\Theta_{j,k-1}^{\,n}+\,\xbar\Theta_{j,k}^{\,n}\big)\big(\,\xbar{\phi}_{j,k}^{\,n}
-\,\xbar{\phi}_{j,k-1}^{\,n}\big)\Big)\\
&-\frac{b^n}{\eps}\bigg(\frac{\,\xbar{\phi}^{\,n}_{j-1,k}-2\,\xbar{\phi}^{\,n}_{j,k}+\,\xbar{\phi}^{\,n}_{j+1,k}}{(\dx)^2}+
\frac{\,\xbar{\phi}^{\,n}_{j,k-1}-2\,\xbar{\phi}^{\,n}_{j,k}+\,\xbar{\phi}^{\,n}_{j,k+1}}{(\dy)^2}\bigg)\\
&+\frac{1}{2\eps(\dx)^2}\Big(\big(\,\bar h_{j,k}^{\,n}+\,\xbar h_{j+1,k}^{\,n}\big)\big(\,\xbar\theta_{j+1,k}^{\,n}-
\,\xbar\theta_{j,k}^{\,n}\big)-\big(\,\xbar h_{j-1,k}^{\,n}+\,\xbar h_{j,k}^{\,n}\big)
\big(\,\xbar\theta_{j,k}^{\,n}-\,\xbar\theta_{j-1,k}^{\,n}\big)\Big)\\
&+\frac{1}{2\eps(\dy)^2}\Big(\big(\,\xbar h_{j,k}^{\,n}+\,\xbar h_{j,k+1}^{\,n}\big)\big(\,\xbar\theta_{j,k+1}^{\,n}-
\,\xbar\theta_{j,k}^{\,n}\big)-\big(\,\xbar h_{j,k-1}^{\,n}+\,\xbar h_{j,k}^{\,n}\big)\big(\,\xbar\theta_{j,k}^{\,n}-
\,\xbar\theta_{j,k-1}^{\,n}\big)\Big)\\
&-\frac{b^n}{\eps}\bigg(\frac{\,\xbar\theta^{\,n}_{j-1,k}-2\,\xbar\theta^{\,n}_{j,k}+\,\xbar\theta^{\,n}_{j+1,k}}{(\dx)^2}+
\frac{\,\xbar\theta^{\,n}_{j,k-1}-2\,\xbar\theta^{\,n}_{j,k}+\,\xbar\theta^{\,n}_{j,k+1}}{(\dy)^2}\bigg).	
\end{aligned}
\label{3.42}
\end{equation}
Finally, the terms $\widetilde{\bm B}_{j,k}^{\,n}$, $\widetilde{\bm B}_{\bm\Psi,j\pm\hf,k}^{\,n}$, $\widetilde{\bm C}_{j,k}^{\,n}$, and 
$\widetilde{\bm C}_{\bm\Psi,j,k\pm\hf}^{\,n}$ in \eref{3.34} reflect the contributions of the nonconservative products and they are 
computed using the linear paths to obtain (see \cite{Castro_2019, Chu_2022} for details)
\begin{equation*}
\begin{aligned}
\widetilde{\bm B}_{j,k}^{\,n}&=\hf\Big(\widetilde B\big(\bV^{-,n}_{\jR,k}\big)+\widetilde B\big(\bV^{+,n}_{\jL,k}\big)\Big)
\big(\bV^{-,n}_{\jR,k}-\bV^{+,n}_{\jL,k}\big),\\
\widetilde{\bm C}_{j,k}^{\,n}&=\hf\Big(\widetilde C\big(\bV^{-,n}_{j,\kR}\big)+\widetilde C\big(\bV^{+,n}_{j,\kL}\big)\Big)
\big(\bV^{-,n}_{j,\kR}-\bV^{+,n}_{j,\kL}\big),\\
\widetilde{\bm B}_{\bm\Psi,j\pm\hf,k}^{\,n}&=\hf\Big(\widetilde B\big(\bV^{-,n}_{j\pm\hf,k}\big)
+\widetilde B\big(\bV^{+,n}_{j\pm\hf,k}\big)\Big)\big(\bV^{+,n}_{j\pm\hf,k}-\bV^{-,n}_{j\pm\hf,k}\big),\\
\widetilde{\bm C}_{\bm\Psi,j,k\pm\hf}^{\,n}&=\hf\Big(\widetilde C\big(\bV^{-,n}_{j,k\pm\hf}\big)
+\widetilde C\big(\bV^{+,n}_{j,k\pm\hf}\big)\Big)\big(\bV^{+,n}_{j,k\pm\hf}-\bV^{-,n}_{j,k\pm\hf}\big),
\end{aligned}
\end{equation*}
where the matrices $\widetilde B$ and $\widetilde C$ are defined in \eref{3.5}.

\subsubsection{Central Differencing for the Stiff Terms}
The stiff terms in \eref{3.13} and \eref{3.16}, $\Delta\psi^{n+1}$ and $\nabla\psi^{n+1}$, are discretized using the second-order central 
differences:
\begin{subequations}
\begin{align}
&\Delta\xbar{\psi}^{\,n+1}_{j,k}=\frac{\,\xbar{\psi}^{\,n+1}_{j-1,k}-2\,\xbar{\psi}^{\,n+1}_{j,k}+\,\xbar{\psi}^{\,n+1}_{j+1,k}}{(\dx)^2}
+\frac{\xbar{\psi}^{\,n+1}_{j,k-1}-2\,\xbar{\psi}^{\,n+1}_{j,k}+\,\xbar{\psi}^{\,n+1}_{j,k+1}}{(\dy)^2}\label{3.43a},\\
&\nabla\xbar{\psi}^{\,n+1}_{j,k}=\bigg(\frac{\,\xbar{\psi}^{\,n+1}_{j+1,k}-\xbar{\psi}^{\,n+1}_{j-1,k}}{2\dx},\,
\frac{\xbar{\psi}^{\,n+1}_{j,k+1}-\xbar{\psi}^{\,n+1}_{j,k-1}}{2\dy}\bigg)^\top\label{3.43b}.
\end{align}
\end{subequations}
\begin{rmk}
We emphasize the importance of using central differences to approximate $\nabla\psi$, as this helps ensure that no numerical diffusion terms
of size ${\cal O}(\eps^{-1})$ are introduced.
\end{rmk}
	
\subsection{First-Order Semi-Implicit Fully Discrete AP Scheme}\label{sec34}
Summarizing the temporal (\S\ref{3.2}) and spatial (\S\ref{3.3}) discretizations, the first-order SI fully discrete AP scheme reads as
\begin{equation}
\begin{aligned}
&\frac{\xbar{\bv}^{\,n+1}_{j,k}-\,\xbar{\bv}^{\,n}_{j,k}}{\dt}+\bm{{\cal R}}^{\bv,n}_{j,k}+\frac{b^n}{\eps}\nabla\xbar{\psi}^{\,n+1}_{j,k} 
+\frac{b^n}{\eps}\big(\,\xbar{\bv}^{\,n+1}_{j,k}\big)^\perp=\bm0,\\
&\frac{\xbar{\phi}^{\,n+1}_{j,k}-\,\xbar{\phi}^{\,n}_{j,k}}{\dt}+{\cal R}^{\phi,n}_{j,k}+
\nu\,\frac{a^n}{\eps}\bnabla\!\cdot\xbar{\bv}^{\,n+1}_{j,k}=0,\\
&\frac{\xbar{\theta}^{\,n+1}_{j,k}-\,\xbar{\theta}^{\,n}_{j,k}}{\dt}+{\cal R}^{\theta,n}_{j,k}=0,\\
&\frac{\xbar q^{\,n+1}_{j,k}-\,\xbar q^{\,n}_{j,k}}{\dt}+{\cal R}^{q,n}_{j,k}=0.
\end{aligned}
\label{3.44}
\end{equation}
The update of this fully discrete system follows directly the three-step procedure introduced in \S\ref{3.2}. 

\subsection{Second-Order Semi-Implicit Fully Discrete AP Scheme}\label{sec35}
Note that the scheme \eref{3.44} is second-order accurate in space but only first-order accurate in time. To achieve second-order temporal
accuracy, we extend this scheme by employing a two-stage, globally stiffly accurate IMEX Runge-Kutta method, namely ARS(2,2,2), introduced
in \cite{Ascher_1997}, and present a second-order SI fully discrete AP scheme. The ARS(2,2,2) method is characterized by the following
double Butcher tableau:
\begin{equation}
\begin{split}
&\qquad~\textbf{Explicit:}\\&
\begin{array}{c|ccc}
0&0&0&0\\
\gamma&\gamma&0&0\\
1&1-\frac{1}{2\gamma}&\frac{1}{2\gamma}&0\\
\hline
&1-\frac{1}{2\gamma}&\frac{1}{2\gamma}&0
\end{array}
\end{split}
\begin{split}
\qquad\qquad\qquad&\qquad\textbf{Implicit:}\\&
\begin{array}{c|ccc} 
0&0&0&0\\
\gamma&0&\gamma&0\\ 
1&0&1-\gamma&\gamma\\
\hline
&0&1-\gamma&\gamma
\end{array}	
\end{split}
\label{3.45}
\end{equation}
with $\gamma=1-1/\sqrt{2}$. In what follows, the superscript $(\cdot)^*$ denotes quantities evaluated at the first stage of the ARS(2,2,2)
method.
	
According to the Butcher tableau \eref{3.45}, the first stage of the second-order SI fully discrete scheme is
\allowdisplaybreaks
\begin{subequations}
\begin{align}
&\frac{\xbar{\bv}^{\,*}_{j,k}-\,\xbar{\bv}^{\,n}_{j,k}}{\dt}+\gamma\bm{{\cal R}}^{\bv,n}_{j,k}
+\gamma\frac{b^n}{\eps}\nabla\xbar{\psi}^{\,*}_{j,k}+\gamma\frac{b^n}{\eps}\big(\bar{\bv}^{\,*}_{j,k}\big)^\perp=\bm0,\label{3.46a}\\
&\frac{\xbar{\phi}^{\,*}_{j,k}-\,\xbar{\phi}^{\,n}_{j,k}}{\dt}+\gamma{\cal R}^{\phi,n}_{j,k}+
\gamma\nu\frac{a^n}{\eps}\bnabla\!\cdot\xbar{\bv}^{\,*}_{j,k}=0,\label{3.46b}\\
&\frac{\xbar{\theta}^{\,*}_{j,k}-\,\xbar{\theta}^{\,n}_{j,k}}{\dt}+\gamma{\cal R}^{\theta,n}_{j,k}=0,\label{3.46c}\\
&\frac{\xbar q^{\,*}_{j,k}-\,\xbar q^{\,n}_{j,k}}{\dt}+\gamma{\cal R}^{q,n}_{j,k}=0.\label{3.46d}
\end{align}
\label{3.46}
\end{subequations}
As in the update procedure for the first-order SI scheme described in \S\ref{sec32}, we proceed in the following stages.

\smallskip
\noindent
$\bullet\,$ \textbf{Step 1 (Compute $\xbar\theta^{\,*}_{j,k}$ and $\,\xbar q^{\,*}_{j,k}$ explicitly).} We solve equations
\eref{3.46c}--\eref{3.46d} to obtain
\begin{equation*}
\xbar\theta^{\,*}_{j,k}=\,\xbar\theta^{\,n}_{j,k}-\gamma\dt{\cal R}^{\theta,n}_{j,k},\qquad
\xbar q^{\,*}_{j,k}=\,\xbar q^{\,n}_{j,k}-\gamma\dt{\cal R}^{q,n}_{j,k}.
\end{equation*}

\smallskip
\noindent
$\bullet\,$ \textbf{Step 2 (Solve the linear elliptic equation for $\,\xbar{\psi}^{\,*}_{j,k}$ and compute $\,\xbar{\phi}^{\,*}_{j,k}$).} We
discretize the linear elliptic equation \eref{3.13}--\eref{3.14} to obtain the following linear system of algebraic equations for
$\,\xbar{\psi}^{\,*}_{j,k}$:
\begin{equation}
\begin{aligned}
\big(\eps^2+a^nb^n(\gamma\dt)^2\big)\,\xbar{\psi}^{\,*}_{j,k}=&-a^nb^n(\gamma\dt)^2\big(\nu\,\xbar q^{\,*}_{j,k}-\nu\bar{\beta}y_k-
\,\xbar\theta^{\,*}_{j,k}\big)+\nu a^nb^n(\gamma\dt)^2\Delta\xbar{\psi}^{\,*}_{j,k}\\
&-\eps\nu a^n\gamma\dt\big(\bnabla\!\cdot\xbar{\bv}^{\,n}_{j,k}-\gamma\dt\bnabla\!\cdot\!\bm{{\cal R}}^{\bv,n}_{j,k}\big)
+\eps^2\big(\,\xbar\psi^{\,n}_{j,k}-\gamma\dt{\cal R}^{\psi,n}_{j,k}\big),
\end{aligned}
\label{3.47}
\end{equation}
where the discrete Laplacian $\Delta\xbar{\psi}^{\,*}_{j,k}$ is defined as in \eref{3.43a}. After solving \eref{3.47}, we obtain
$\xbar{\phi}^{\,*}_{j,k}=\,\xbar\psi^{\,*}_{j,k}-\,\xbar\theta^{\,*}_{j,k}$.

\smallskip
\noindent
$\bullet\,$ \textbf{Step 3 (Compute $\,\xbar{\bv}^{\,*}_{j,k}$).} Once $\,\xbar\psi^{\,*}_{j,k}$ is available, we compute	
\begin{equation*}
\begin{aligned}
\xbar{\bv}^{\,*}_{j,k}=\frac{1}{\eps^2+(b^n\gamma\dt)^2}\Big(\!&-\eps^2\gamma\dt\bm{{\cal R}}^{\bv,n}_{j,k}
+\eps b^n(\gamma\dt)^2\big(\bm{{\cal R}}^{\bv,n}_{j,k}\big)^\perp+\eps^2\,\xbar{\bv}^{\,n}_{j,k}\\
&-\eps b^n\gamma\dt\big((\,\xbar{\bv}^{\,n}_{j,k})^\perp+\nabla\xbar\psi^{\,*}_{j,k}\big)+
(b^n\gamma\dt)^2\nabla^\perp\xbar\psi^{\,*}_{j,k}\Big),
\end{aligned}
\end{equation*}
where the discrete gradient operator $\nabla\xbar{\psi}^{\,*}_{j,k}$ is defined as in \eref{3.43b}.
	
Equipped with $\,\xbar{\bV}^{\,*}_{j,k}=\big((\,\xbar{\bv}^{\,*}_{j,k})^\top,\,\xbar{\phi}^{\,*}_{j,k},\,\xbar\theta^{\,*}_{j,k},
\,\xbar q^{\,*}_{j,k}\big)^\top$, we evaluate $\bm{{\cal R}}_{j,k}^*$ in the same way as in \eref{3.34}, with the time-level superscript $n$
replaced by $*$. The parameters $a^*$ and $b^*$ are computed analogously to \eref{3.37}, again by replacing the time-level superscript $n$
with $*$. We then proceed to the second stage of \eref{3.45}, which can be written as
\begin{subequations}
\begin{align}
\frac{\xbar{\bv}^{\,n+1}_{j,k}-\,\xbar{\bv}^{\,n}_{j,k}}{\dt}+\left(1-\frac{1}{2\gamma}\right)\bm{{\cal R}}^{\bv,n}_{j,k}
+\frac{1}{2\gamma}\bm{{\cal R}}^{\bv,*}_{j,k}&+(1-\gamma)\frac{b^n}{\eps}\nabla\xbar{\psi}^{\,*}_{j,k}+
(1-\gamma)\frac{b^n}{\eps}(\xbar{\bv}^{\,*}_{j,k})^\perp\notag\\
&+\gamma\frac{b^*}{\eps}\nabla\xbar\psi^{\,n+1}_{j,k}+\gamma\frac{b^*}{\eps}(\xbar{\bv}^{\,n+1}_{j,k})^\perp=\bm0,\label{3.48a}\\
\frac{\xbar{\phi}^{\,n+1}_{j,k}-\,\xbar{\phi}^{\,n}_{j,k}}{\dt}+\left(1-\frac{1}{2\gamma}\right){\cal R}^{\phi,n}_{j,k}
+\frac{1}{2\gamma}{\cal R}^{\phi,*}_{j,k}&+(1-\gamma)\nu\frac{a^n}{\eps}\bnabla\!\cdot\xbar{\bv}^{\,*}_{j,k}
+\gamma\nu\frac{a^*}{\eps}\bnabla\!\cdot\xbar{\bv}^{\,n+1}_{j,k}=0,\label{3.48b}\\
\frac{\xbar\theta^{\,n+1}_{j,k}-\,\xbar\theta^{\,n}_{j,k}}{\dt}+\left(1-\frac{1}{2\gamma}\right){\cal R}^{\theta,n}_{j,k}
+\frac{1}{2\gamma}{\cal R}^{\theta,*}_{j,k}&=0,\label{3.48c}\\
\frac{\xbar q^{\,n+1}_{j,k}-\,\xbar q^{\,n}_{j,k}}{\dt}+\left(1-\frac{1}{2\gamma}\right){\cal R}^{q,n}_{j,k}
+\frac{1}{2\gamma}{\cal R}^{q,*}_{j,k}&=0,\label{3.48d}
\end{align}
\label{3.48}
\end{subequations}
and is realized in Steps~4--6.

\smallskip
\noindent
$\bullet\,$ \textbf{Step 4 (Compute $\xbar\theta^{\,n+1}_{j,k}$ and $\,\xbar q^{\,n+1}_{j,k}$ explicitly).} We solve equations
\eref{3.48c}--\eref{3.48d} directly to obtain
\begin{equation*}
\begin{aligned}
&\xbar\theta^{\,n+1}_{j,k}=\,\xbar\theta^{\,n}_{j,k}-\left(1-\frac{1}{2\gamma}\right)\dt{\cal R}^{\theta,n}_{j,k}-
\frac{1}{2\gamma}\dt{\cal R}^{\theta,*}_{j,k},\qquad
\xbar q^{\,n+1}_{j,k}=\,\xbar q^{\,n}_{j,k}-\left(1-\frac{1}{2\gamma}\right)\dt{\cal R}^{q,n}_{j,k}-
\frac{1}{2\gamma}\dt{\cal R}^{q,*}_{j,k}.
\end{aligned}
\end{equation*}

\smallskip
\noindent
$\bullet\,$ \textbf{Step 5 (Solve the linear elliptic equation for $\,\xbar\psi^{\,n+1}_{j,k}$ and compute $\,\xbar{\phi}^{\,n+1}_{j,k}$).}
We discretize the linear elliptic equation \eref{3.13}--\eref{3.14} to obtain the following the following linear system of algebraic
equations for $\,\xbar\psi^{\,n+1}_{j,k}$:
\begin{equation}
\begin{aligned}
\big(\eps^2+a^*b^*(\gamma\dt)^2\big)\,\xbar\psi^{\,n+1}_{j,k}=&-a^*b^*(\gamma\dt)^2\big(\nu\,\xbar q^{\,n+1}_{j,k}-\nu\bar{\beta}y_k
-\,\xbar\theta^{\,n+1}_{j,k}\big)+\nu a^*b^*(\gamma\dt)^2\Delta\xbar\psi^{\,n+1}_{j,k}\\
&-\eps\nu a^*\gamma\dt\left(\bnabla\!\cdot\xbar{\bv}^{\,n}_{j,k}-
\left(1-\frac{1}{2\gamma}\right)\dt\bnabla\!\cdot\!\bm{{\cal R}}^{\bv,n}_{j,k}\right)\\
&-\eps\nu\Big(a^n(1-\gamma)\dt\bnabla\!\cdot\xbar{\bv}^{\,*}_{j,k}-
a^*(\gamma\dt)\frac{1}{2\gamma}\dt\bnabla\!\cdot\!\bm{{\cal R}}^{\bv,*}_{j,k}\Big)\\
&+\eps^2\left(\xbar\psi^{\,n}_{j,k}-\left(1-\frac{1}{2\gamma}\right)\dt{\cal R}^{\psi,n}_{j,k}-\frac{1}{2\gamma}\dt{\cal R}^{\psi,*}_{j,k}
\right)\\
&-a^*b^n\gamma(1-\gamma)(\dt)^2\Big(\nu\,\xbar q^{\,*}_{j,k}-\nu\bar{\beta}y_k+\,\xbar{\phi}^{\,*}_{j,k}-\nu\Delta\xbar\psi^{\,*}_{j,k}
\Big),
\end{aligned}
\label{3.49}
\end{equation}
where the discrete Laplacian $\Delta\xbar\psi^{\,*}_{j,k}$ is defined as in \eref{3.43a}. Similarly, the discrete divergence operators
$\bnabla\!\cdot\xbar{\bv}^{\,*}_{j,k}$ and $\bnabla\!\cdot\!\bm{{\cal R}}^{\bv,*}_{j,k}$ are defined as in \eref{3.38} and
\eref{3.39}--\eref{3.42}, respectively. After solving \eref{3.49}, we obtain
$\,\xbar{\phi}^{\,n+1}_{j,k}=\,\xbar\psi^{\,n+1}_{j,k}-\,\xbar\theta^{\,n+1}_{j,k}$.

\smallskip
\noindent
$\bullet\,$ \textbf{Step 6 (Compute $\,\xbar{\bv}^{\,n+1}_{j,k}$).} Once $\,\xbar\psi^{\,n+1}_{j,k}$ is available, we compute
\begin{equation*}
\begin{aligned}
\xbar{\bv}^{\,n+1}_{j,k}&=\frac{1}{\eps^2+(b^*\gamma\dt)^2}\Big[\!-\eps\frac{1}{2\gamma}\dt\left(\eps\bm{{\cal R}}^{\bv,*}_{j,k}
-b^*\gamma\dt\big(\bm{{\cal R}}^{\bv,*}_{j,k}\big)^\perp\right)\\
&-\eps\bigg(1-\frac{1}{2\gamma}\bigg)\dt\Big(\eps\bm{{\cal R}}^{\bv,n}_{j,k}-b^*\gamma\dt\big(\bm{{\cal R}}^{\bv,n}_{j,k}\big)^\perp\Big)
-b^nb^*\gamma(1-\gamma)(\dt)^2\big(\,\xbar{\bv}^{\,*}_{j,k}-\nabla^\perp\xbar\psi^{\,*}_{j,k}\big)\\
&+\eps^2\,\xbar{\bv}^{\,n}_{j,k}-\eps b^n(1-\gamma)\dt\big((\,\xbar{\bv}^{\,*}_{j,k})^\perp+\nabla\xbar\psi^{\,*}_{j,k}\big)
-\eps b^*\gamma\dt\big((\,\xbar{\bv}^{\,n}_{j,k})^\perp+\nabla\xbar\psi^{\,n+1}_{j,k}\big)+
(b^*\gamma\dt)^2\nabla^\perp\xbar\psi^{\,n+1}_{j,k}\Big],
\end{aligned}
\end{equation*}
where the discrete gradient operator $\nabla\xbar\psi^{\,*}_{j,k}$ is defined as in \eref{3.43b}.
\begin{rmk}
For brevity, we omit the intermediate algebraic manipulations and present only the final formulae in Steps 5 and 6.
\end{rmk}
\begin{rmk}
It is worth noting that, when $\gamma=1$ in \eref{3.45}, Steps 1--3 in the second-order update reduce exactly to the update procedure of 
the first-order SI, fully discrete AP scheme \eref{3.44}.
\end{rmk}

\section{All-Rossby-Number Dual Formulation Finite-Volume Method}\label{sec4}
We recall that the primary objective of this work is to develop an efficient numerical method for the 2-D TRSW equations that remains valid
across all-Rossby-number regimes. Our approach is based on a dual formulation framework recently introduced in
\cite{Abgrall_2025,CFKM,CKKM}. Specifically, we solve both the conservative and nonconservative (primitive) forms of the TRSW equations
simultaneously, exploiting their complementary strengths. In high-Rossby-number regimes, accurate shock capturing requires the conservative
formulation, whereas in low-Rossby-number regimes, the nonconservative formulation is better suited for enforcing the divergence-free
constraint and for recovering the correct TQG asymptotics, while both formulations converge to the same physically relevant TQG limit. To
this end, we employ the AP scheme described in \S\ref{sec3} to compute numerical solutions of the augmented primitive system \eref{2.9},
together with a finite-volume, Riemann-problem-solver-free CU scheme, presented in \S\ref{sec41}, for the conservative TRSW system
\eref{2.20}. Having obtained two sets of numerical solutions---one conservative and one primitive---we introduce in \S\ref{sec43} a
post-processing procedure that blends them to produce a DF-FV method valid for all-Rossby-number regimes. This post-processing ensures that,
in a high-Rossby-number regime, the solution is effectively governed by the CU scheme applied to the conservative system, while in a
low-Rossby-number regime, it is primarily dictated by the AP solution of the primitive system.

\subsection{Semi-Discrete CU Scheme}\label{sec41}
In this section, we describe the semi-discrete CU scheme for the conservative formulation of the TRSW equations \eref{2.20}. To this end, we
write \eref{2.20} in the vector form
\begin{equation}
\bU_t+\bF(\bU)_x+\bG(\bU)_y=\bS(\bU),
\label{4.1}
\end{equation}
where $\bU:=(h,hu,hv,h\Theta)^\top$ is the vector of conservative variables, and $\bF,\bG$ and $\bS$ are the fluxes and the source,
respectively, given by
\begin{equation}
\bF(\bU)=\begin{pmatrix}hu\\[0.5ex]hu^2+\dfrac{\nu}{2\eps^2}\Theta h^2\\[1ex]huv\\hu\Theta\end{pmatrix},\quad
\bG(\bU)=\begin{pmatrix}hv\\huv\\[0.5ex]hv^2+\dfrac{\nu}{2\eps^2}\Theta h^2\\[1ex]hv\Theta\end{pmatrix},\quad
\bS(\bU)=\begin{pmatrix}0\\[0.5ex]\dfrac{1+\eps\bar{\beta}y}{\eps}hv\\[1ex]\dfrac{1+\eps\bar{\beta}y}{-\eps}hu\\[0.5ex]0\end{pmatrix}.
\label{4.2}
\end{equation}
	
We assume that the cell averages $\xbar{\bU}_{j,k}(t)\approx\frac{1}{\dx\dy}\iint_{I_{j,k}}\bU(x,y,t)\,\d x\,\d y$ are available at a
certain time level $t$ and are evolved in time according to the following system of ODEs:
\begin{equation}
\frac{\d}{\d t}\,\xbar{\bU}_{j,k}=\bm{{\cal L}}_{j,k}\quad\mbox{with}\quad\bm{{\cal L}}_{j,k}
:=-\frac{\bm{{\cal F}}_{\jR,k}-\bm{{\cal F}}_{\jL,k}}{\dx}-\frac{\bm{{\cal G}}_{j,\kR}-\bm{{\cal G}}_{j,\kL}}{\dy}+\xbar{\bS}_{j,k}.
\label{4.3}
\end{equation}
Here,
\begin{equation}
\begin{aligned}
&\bm{{\cal F}}_{\jR,k}=\frac{\sigma^+_{\jR,k}\bF\big(\bU^-_{\jR,k}\big)-\sigma^-_{\jR,k}\bF\big(\bU^+_{\jR,k}\big)}{\sigma^+_{\jR,k}-
\sigma^-_{\jR,k}}+\frac{\sigma^+_{\jR,k}\,\sigma^-_{\jR,k}}{\sigma^+_{\jR,k}-\sigma^-_{\jR,k}}\Big(\bU^+_{\jR,k}-\bU^-_{\jR,k}\Big),\\
&\bm{{\cal G}}_{j,\kR}=\frac{\sigma^+_{j,\kR}\bG\big(\bU^-_{j,\kR}\big)-\sigma^-_{j,\kR}\bG\big(\bU^+_{j,\kR}\big)}{\sigma^+_{j,\kR}-
\sigma^-_{j,\kR}}+\frac{\sigma^+_{j,\kR}\,\sigma^-_{j,\kR}}{\sigma^+_{j,\kR}-\sigma^-_{j,\kR}}\Big(\bU^+_{j,\kR}-\bU^-_{j,\kR}\Big),
\end{aligned}
\label{4.4}
\end{equation}
are the CU numerical fluxes from \cite{Kurganov_2001}. The interface values $\bU_{\jR,\,k}^\pm:=\bU\big(\bV_{\jR,\,k}^\pm\big)$ and 
$\bU_{j,\,\kR}^\pm:=\bU\big(\bV_{j,\,\kR}^\pm\big)$ are computed from the reconstructed primitive variables $\bV_{\jR,\,k}^\pm$ and 
$\bV_{j,\,\kR}^\pm$ (see \S\ref{sec33}) via a straightforward transformation $\bU(\bV)$ from $\bV$ to $\bU$. In addition, the quantities
$\sigma_{\jR,\,k}^\pm$ and $\sigma_{j,\,\kR}^\pm$ denote the one-sided local speeds of propagation for the conservative system
\eref{4.1}--\eref{4.2} in the $x$- and $y$-directions, respectively. They are estimated using the largest and smallest eigenvalues of the
corresponding flux Jacobians as follows:
\begin{equation}
\begin{aligned}
&\sigma^+_{\jR,k}=\max\left\{u^-_{\jR,k}+\frac{1}{\eps}\sqrt{\nu(h\Theta)^-_{\jR,k}},\,
u^+_{\jR,k}+\frac{1}{\eps}\sqrt{\nu(h\Theta)^+_{\jR,k}},\,0\right\},\\
&\sigma^-_{\jR,k}=\min\,\left\{u^-_{\jR,k}-\frac{1}{\eps}\sqrt{\nu(h\Theta)^-_{\jR,k}},\,
u^+_{\jR,k}-\frac{1}{\eps}\sqrt{\nu(h\Theta)^+_{\jR,k}},\,0\right\},\\
&\sigma^+_{j,\kR}=\max\left\{v^-_{j,\kR}+\frac{1}{\eps}\sqrt{\nu(h\Theta)^-_{j,\kR}},\,
v^+_{j,\kR}+\frac{1}{\eps}\sqrt{\nu(h\Theta)^+_{j,\kR}},\,0\right\},\\
&\sigma^-_{j,\kR}=\min\,\left\{v^-_{j,\kR}-\frac{1}{\eps}\sqrt{\nu(h\Theta)^-_{j,\kR}},\,
v^+_{j,\,\kR}-\frac{1}{\eps}\sqrt{\nu(h\Theta)^+_{j,\kR}},\,0\right\}.
\end{aligned}
\label{4.5}
\end{equation}
Finally, the cell averages of $\bS$ are approximated using the midpoint rule, which results in
\begin{equation}
\xbar{\bS}_{j,k}=\bigg(0,\,\frac{1+\eps\bar{\beta}y_k}{\eps}\left(\xbar{hv}\right)_{j,k},\,
-\frac{1+\eps\bar{\beta}y_k}{\eps}\left(\xbar{hu}\right)_{j,k},\,0\bigg)^\top.
\label{4.6}
\end{equation}
We note that most of the indexed quantities in \eref{4.3}--\eref{4.6} are time-dependent, but we have omitted this dependence for the sake 
of brevity.
	
\subsection{Second-Order Fully Discrete CU Scheme}
As it was mentioned above, in the DF-FV approach, the $\bU$- and $\bV$-solutions are evolved in time simultaneously. We therefore discretize
the ODE system \eref{4.3} using the explicit part of the ARS(2,2,2) method. The second-order fully discrete CU scheme for \eref{2.20} then
reads as
\begin{equation*}
\begin{aligned}
&\xbar{\bU}^{\,*}_{j,k}=\,\xbar{\bU}_{j,k}^{\,n}+\gamma\dt\bm{{\cal L}}_{j,k}^{\,n},\\
&\xbar{\bU}_{j,k}^{\,n+1}=\,\xbar{\bU}_{j,k}^{\,n}+\Big(1-\frac{1}{2\gamma}\Big)\dt\bm{{\cal L}}_{j,k}^{\,n}+
\frac{1}{2\gamma}\dt\bm{{\cal L}}_{j,k}^{\,*}.
\end{aligned}
\end{equation*}
where $\,\xbar{\bU}^{\,*}_{j,k}$ and $\bm{{\cal L}}_{j,k}^*$ denote the cell averages and right-hand side \eref{4.3} computed after the 
first stage of the ARS(2,2,2) method, and $\,\xbar{\bU}^{\,n}_{j,k}:=\,\xbar{\bU}_{j,k}(t^n)$ and
$\,\xbar{\bU}^{\,n+1}_{j,k}:=\,\xbar{\bU}_{j,k}(t^{n+1})$.

\subsection{Post-Processing}\label{sec43}
As the result of evolving both conservative $\bm U$ and primitive $\bm V$ solutions in time according to \eref{4.3} and \eref{3.46},
\eref{3.48}, respectively, each time step will produce two numerical solutions: one that is AP but nonconservative, and another one that is 
conservative but non-AP. This naturally raises a key question: how can one construct an oscillation-free AP scheme that combines the
advantages of the primitive and conservative solutions within a unified framework? 

To address this issue, we follow \cite{CKKM} and introduce a post-processing that couples the two solutions and yields the DF-FV method, 
capable of accurately handling all-Rossby-number regimes. Specifically, upon completion of every stage of the ARS(2,2,2) time evolution 
method, we modify the obtained $\bV$-solution by replacing $\xbar{\bV}^{\,*}_{j,k}$ with 
$\big(1-\varphi(\eps)\big)\bV\big(\xbar{\bU}^{\,*}_{j,k}\big)+\varphi(\eps)\xbar{\bV}^{\,*}_{j,k}$ (after the first stage) and 
$\xbar{\bV}^{\,n+1}_{j,k}$ with $\big(1-\varphi(\eps)\big)\bV\big(\xbar{\bU}_{j,k}^{\,n+1}\big)+\varphi(\eps)\xbar{\bV}^{\,n+1}_{j,k}$
(after the second stage), where $\bV(\bU)$ is a transformation function from the conservative to primitive variables.

Here, $\varphi(\eps)$ is a switching function, which is supposed to be $\sim1$ for $\eps\sim0$ and $\sim0$ for $\eps\sim1$. In general, when
$\eps>0.1$, the flow is in a high-Rossby-number regime, while for smaller values of $\eps$, it corresponds to a low-Rossby-number regime.
Therefore, the post-processing function should be almost zero for $\eps>0.1$. One can design many switching functions that satisfy these
requirements. In the numerical experiments reported in \S\ref{sec5}, we have used $\varphi(\eps)=\exp(-2000\eps^6)$, which seems to be a
robust choice.

\section{Numerical Results}\label{sec5}
In this section, we conduct several numerical experiments to test the developed AP DF-FV method. We also compare its results with those
obtained using a second-order CU scheme as a spatial discretization for the conservative formulation of the TRSW equations described in
\S\ref{sec41}, combined with an explicit second-order strong stability preserving Runge–Kutta time discretization \cite{Gottlieb_2001},
hereafter referred to as the Explicit scheme. For all numerical experiments, the minmod parameter is set to $\mu=1.3$, and the CFL number is
fixed at $0.25$. The time step for the AP DF-FV method is asymptotically independent of $\eps$ and restricted by 
\begin{equation*}
\dt_{\rm AP}={\rm CFL}\cdot\min\left\{\frac{\dx}{\max\limits_{j,k}\Big\{s_{\jR,k}^+,-s_{\jR,k}^-\Big\}},\,
\frac{\dy}{\max\limits_{j,k}\Big\{s_{j,\kR}^+,-s_{j,\kR}^-\Big\}}\right\},
\end{equation*}
where $s_{\jR,k}^{\pm}$ and $s_{j,\kR}^{\pm}$ are defined in \eref{3.35f}--\eref{3.37}, while the Explicit scheme is stable under a
different time-step restriction:
\begin{equation*}
\dt_{\rm EX}={\rm CFL}\cdot\min\left\{\frac{\dx}{\max\limits_{j,k}\Big\{\sigma_{\jR,k}^+,-\sigma_{\jR,k}^-\Big\}},\,
\frac{\dy}{\max\limits_{j,k}\Big\{\sigma_{j,\kR}^+,-\sigma_{j,\kR}^-\Big\}}\right\},
\end{equation*}
where $\eps$-dependent one-sided speeds $\sigma_{\jR,k}^\pm$ and $\sigma_{j,\kR}^\pm$ are defined in \eref{4.5}.

In Examples 5.3--5.5, the test data are taken from the existing studies in which the governing equations are given in the dimensional form.
Physical quantities, reported below using their standard unit symbols, are rescaled to match the nondimensional TRSW equations \eref{2.20}
(or \eref{3.1}) solved by the proposed AP DF–FV method.

\begin{example}({\bf Accuracy Test})
We consider the nondimensional TRSW system \eref{2.20} subject to the initial data
\begin{equation*}
\begin{aligned}
&h(x,y,0)=1+0.9\eps^2\cos(2\pi(x+y)),&&u(x,y,0)=\pi\sin(2\pi x)\cos(2\pi y),\\
&\Theta(x,y,0)=1+0.9\eps\sin(2\pi x)\sin(2\pi y),&&v(x,y,0)=\pi\cos(2\pi x)\sin(2\pi y),
\end{aligned}
\end{equation*}
which are prescribed in the computational domain $[0,1]\times[0,1]$ subject to the periodic boundary conditions. We take $\bar\beta=0$,
$\nu=1$, and compute the numerical solution until the final time $t=0.01$ by the proposed AP DF-FV method on a sequence of uniform meshes
with $\dx=\dy=1/16$, $1/32$, $1/64$, $1/128$, and $1/256$ for various values of $\eps$. The $L^1$-errors and experimental order of
convergence (EOC) for the computed $h$, $hu$, and $h\Theta$ are documented in Tables \ref{tab51}, \ref{tab52}, and \ref{tab53},
respectively, demonstrating that the second order of accuracy is achieved independently of $\eps$.
\begin{table}[ht!]
\centering
\begin{tabular}{|c|cc|cc|cc|cc|}
\hline
\multirow{2}{*}{$\dx=\dy$}&\multicolumn{2}{c|}{$\eps=1$}&\multicolumn{2}{c|}{$\eps=10^{-2}$}&\multicolumn{2}{c|}{$\eps=10^{-4}$}
&\multicolumn{2}{c|}{$\eps=10^{-6}$}\\
\cline{2-9}
&Error&EOC&Error&EOC&Error&EOC&Error&EOC\\
\hline  		         
1/16 &9.87e-03& -- &3.90e-04& -- &2.94e-07& -- &2.93e-09& -- \\
1/32 &3.02e-03&1.71&2.57e-04&0.60&7.56e-08&1.96&7.54e-10&1.96\\        
1/64 &8.44e-04&1.84&4.11e-05&2.65&1.91e-08&1.98&1.91e-10&1.98\\          
1/128&2.27e-04&1.90&4.84e-06&3.09&4.23e-09&2.18&4.21e-11&2.18\\
1/256&4.86e-05&2.22&7.63e-07&2.66&8.23e-10&2.36&8.07e-12&2.38\\
\hline
\end{tabular}
\caption{\sf Example 5.1: The $L^1$-errors and EOC for $h$ and for different values of $\eps$.\label{tab51}}
\end{table}
\begin{table}[ht!]
\centering
\begin{tabular}{|c|cc|cc|cc|cc|}
\hline
\multirow{2}{*}{$\dx=\dy$}&\multicolumn{2}{c|}{$\eps=1$}&\multicolumn{2}{c|}{$\eps=10^{-2}$}&\multicolumn{2}{c|}{$\eps=10^{-4}$}
& \multicolumn{2}{c|}{$\eps = 10^{-6}$} \\ 
\cline{2-9}
&Error&EOC&Error&EOC&Error&EOC&Error&EOC\\
\hline  		           
1/16 &2.93e-02& -- &1.39e-02& -- &5.28e-02& -- &1.30e-01& -- \\   
1/32 &7.90e-03&1.89&1.33e-02&0.06&1.29e-02&2.04&3.22e-02&2.04\\          
1/64 &2.12e-03&1.89&8.26e-03&0.69&3.10e-03&2.05&7.67e-03&2.05\\          
1/128&5.29e-04&2.01&2.27e-03&1.86&7.25e-04&2.09&1.79e-03&2.10\\
1/256&1.08e-04&2.29&4.68e-04&2.28&1.44e-04&2.33&3.54e-04&2.33\\
\hline
\end{tabular}
\caption{\sf Example 5.1: The same as in Table \ref{tab51}, but for $hu$.\label{tab52}}
\end{table}
\begin{table}[ht!]
\centering
\begin{tabular}{|c|cc|cc|cc|cc|}
\hline
\multirow{2}{*}{$\dx=\dy$}&\multicolumn{2}{c|}{$\eps=1$}&\multicolumn{2}{c|}{$\eps=10^{-2}$}&\multicolumn{2}{c|}{$\eps=10^{-4}$}
&\multicolumn{2}{c|}{$\eps=10^{-6}$} \\ 
\cline{2-9}
&Error&EOC&Error&EOC&Error&EOC&Error&EOC\\
\hline 		           
1/16 &1.07e-02& -- &3.92e-04& -- &3.24e-07& -- &3.19e-09& -- \\   
1/32 &3.34e-03&1.68&2.57e-04&0.61&7.36e-08&2.14&7.27e-10&2.13\\          
1/64 &9.84e-04&1.76&4.11e-05&2.65&1.86e-08&1.98&1.84e-10&1.98\\          
1/128&2.52e-04&1.97&4.85e-06&3.08&4.46e-09&2.06&4.38e-11&2.07\\
1/256&5.19e-05&2.28&7.60e-07&2.67&9.29e-10&2.26&8.52e-12&2.36\\
\hline
\end{tabular}
\caption{\sf Example 5.1: The same as in Table \ref{tab51}, but for $h\Theta$.\label{tab53}}
\end{table}	
\end{example}

\begin{example}({\bf Freely Decaying Wavetrain}) 
In this example, we consider a freely decaying wavetrain in the strongly rotating regime, in which gravity waves are strongly influenced by 
rotation. We begin by considering the RSW model governed by \eref{2.20} with $\Theta\equiv1$ and use the initial data setting from
\cite[Example 5.2]{Buhler_2000}. The initial conditions consist of a zonally symmetric train of gravity waves with a Gaussian envelope in
the meridional direction, defined as follows:
\begin{equation}
\begin{aligned}
&h(x,y,0)=1+0.2\sin x\left(1+\frac{1}{d^2}\Big(2-\frac{4}{d^2}y_c^2\Big)\right){\rm e}^{-y_c^2/d^2},\\
&u(x,y,0)=0.2\sin x\Big(\sqrt{2}-\frac{2}{d^2}y_c\Big){\rm e}^{-y_c^2/d^2},\quad
v(x,y,0)=0.2\cos x\bigg(\frac{2\sqrt{2}}{d^2}y_c-1\bigg){\rm e}^{-y_c^2/d^2},
\end{aligned}
\label{5.1}
\end{equation}
where $d=40$ and $y_c=y-160$. These data are prescribed in the computational domain $[0,2\pi]\times[0,320]$ subject to the periodic boundary
conditions in the $x$-direction and free boundary conditions in the $y$-direction. We take $\bar\beta=0$, $\eps=\nu=1$, and compute the
solution by the proposed AP DF-FV method on a uniform mesh of $126\times162$ cells until the final time $t=20\pi$. Figure \ref{Fig1} (left)
presents one-dimensional (1-D) slices of the fluid thickness $h$ along $y=160$ at times $t=1.4\pi$, $2.8\pi$, and $20\pi$. The reference
solution is obtained by the Explicit scheme on a finer $420\times540$ uniform mesh. We observe that the wave initially steepens and then,
after a brief phase of dissipative decay, adjusts into a steadily propagating nonlinear equilibrium profile in which nonlinearity and
dispersion are balanced. The AP DF-FV results are consistent with those reported in \cite{Buhler_2000} and agree well with the reference
solutions. 
	
Next, we consider the TRSW model and augment the initial data \eref{5.1} with
\begin{equation*}
\Theta(x,y,0)=1+0.5\cos x\left(1+\frac{1}{d^2}\Big(2-\frac{4}{d^2}y_c^2\Big)\right){\rm e}^{-y_c^2/d^2}.
\end{equation*}
We then compute the solution of the TRSW system on the same grids as before and report the obtained results in Figure \ref{Fig1} (right). In
contrast to the RSW model, the TRSW solution exhibits a more pronounced variation in the fluid thickness $h$, reflecting the additional
coupling introduced by the thermodynamic variable $\Theta$. Moreover, the nonlinear adjustment process is noticeably slower: even at
$t=20\pi$, the solution remains in the decay phase and has not yet reached a steady propagating state. 
\begin{figure}[ht!]
\centering
\includegraphics[height=0.23\textheight]{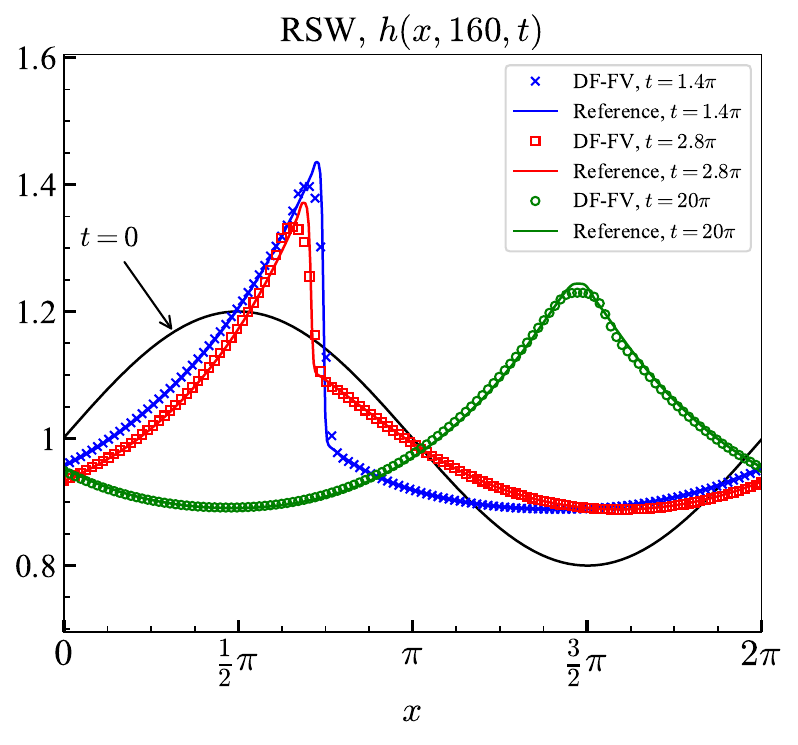}\hspace{1cm}
\includegraphics[height=0.23\textheight]{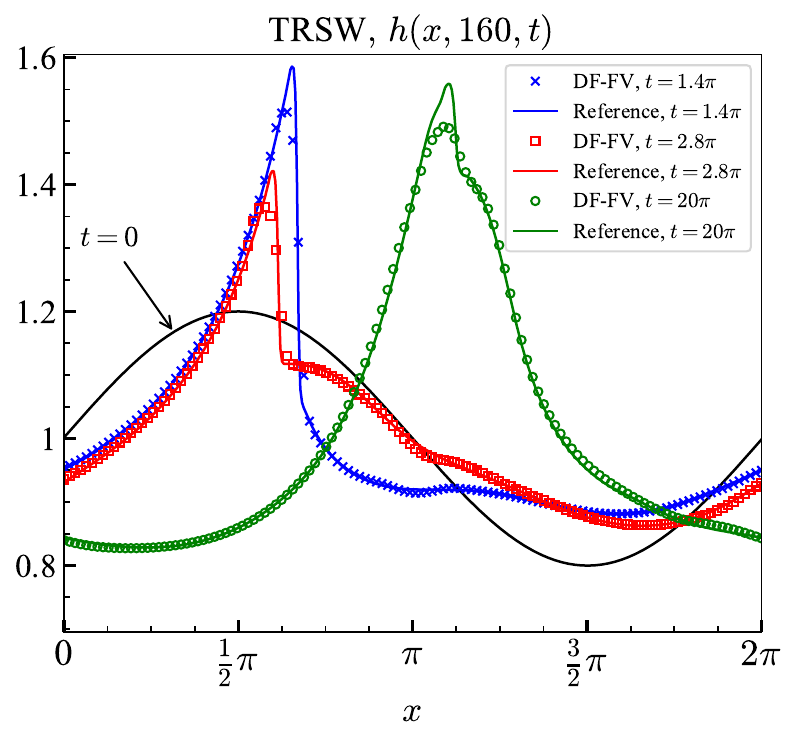}
\caption{\sf Example 5.2: 1-D slices of the fluid thickness $h$ along $y=160$, showing the initial sinusoidal shape, shock formation, decay,
and adjustment for the RSW (left) and TRSW (right) equations.\label{Fig1}}
\end{figure}

This example highlights the qualitative differences between the RSW and TRSW models in the strongly rotating regime and further demonstrates
the capability of the proposed AP DF–FV method to accurately and robustly resolve the flow dynamics---including nonlinear wave adjustment
and the formation of shock structures---for both the RSW and TRSW systems.
\end{example}
  
\begin{example}({\bf Vortex Pair Interaction})
In this example, we examine the evolution of the vortex pair interaction problem to demonstrate the effectiveness of the proposed AP DF-FV
method in the intermediate regime. The initial data taken from \cite{Eldred_2019,Karasozen_2022,Tambyah_2025} are given by 
\begin{equation*}
\begin{aligned}
&h(x,y,0)=H_0-\Phi_0\left({\rm e}^{-\frac{\tilde x_1^2+\tilde y_1^2}{2}}+{\rm e}^{-\frac{\tilde x_2^2+\tilde y_2^2}{2}}-
\frac{9\pi}{400}\right),\quad\Theta(x,y,0)=\g\left(1-0.05\sin\!\Big(2\pi\frac{x}{L_x}\Big)\right),\\
&u(x,y,0)=-\frac{40\g\,\Phi_0}{3f_0L_y}\left(\tilde{\tilde{y}}_1{\rm e}^{-\frac{\tilde x_1^2+\tilde y_1^2}{2}}
+\tilde{\tilde{y}}_2\,{\rm e}^{-\frac{\tilde x_2^2+\tilde y_2^2}{2}}\right),\quad
v(x,y,0)=\frac{40\g\,\Phi_0}{3f_0L_x}\left(\tilde{\tilde{x}}_1{\rm e}^{-\frac{\tilde x_1^2+\tilde y_1^2}{2}}
+\tilde{\tilde{x}}_2\,{\rm e}^{-\frac{\tilde x_2^2+\tilde y_2^2}{2}}\right),
\end{aligned}
\end{equation*}
and defined on the computational domain $[0,L_x]\times[0,L_y]$ with $L_x=L_y=5000\,{\rm km}$ with the periodic boundary conditions. Here,
$f_0=6.147\times10^{-5}\,{\rm s^{-1}}$ is the Coriolis parameter (that is, $\beta\equiv0$), $\g=9.80616\,{\rm m/s^2}$ is gravitational
acceleration, $H_0=750\,{\rm m}$ is the mean depth of the fluid layer, $\Phi_0=75\,{\rm m}$ is the mean characteristic buoyancy scale, and 
\begin{equation*}
\begin{aligned}
&\tilde x_{1,2}=\frac{40}{3\pi}\sin\!\Big(\frac{\pi}{L_x}\big(x-x_{c_{1,2}}\big)\Big),&&
\tilde{\tilde x}_{1,2}=\frac{20}{3\pi}\sin\!\Big(\frac{2\pi}{L_x}\big(x-x_{c_{1,2}}\big)\Big),&&
x_{c_1}=\frac{2}{5}L_x,~x_{c_2}=\frac{3}{5}L_x,\\
&\tilde y_{1,2}=\frac{40}{3\pi}\sin\!\Big(\frac{\pi}{L_y}\big(y-y_{c_{1,2}}\big)\Big),&&
\tilde{\tilde y}_{1,2}=\frac{20}{3\pi}\sin\!\Big(\frac{2\pi}{L_y}\big(y-y_{c_{1,2}}\big)\Big),&&
y_{c_1}=\frac{2}{5}L_y,~y_{c_2}=\frac{3}{5}L_y.
\end{aligned}
\end{equation*}
To express the problem in the nondimensional form, we select the reference values $\Theta_0=\g$, $L_0=3(L_x+L_y)/20$, and $V_0=\Theta_0\Phi_0/(L_0f_0)$. Then, according to \eref{2.5}, the dimensionless parameters $\Ro$, 
$\bar\beta$, and $\Bu$ become
\begin{equation*}
\Ro=\eps=\frac{V_0}{L_0f_0}\approx0.087,\quad\bar\beta=0,\quad\mbox{and}\quad\Bu=\nu=\frac{\Theta_0H_0}{L_0^2f_0^2}\approx0.865,
\end{equation*}
indicating that the flow is in an intermediate regime.

We compute the numerical solution of \eref{2.20} (\eref{3.1}) by the proposed AP DF-FV method on three meshes consisting of $300\times300$,
$400\times400$, and $600\times600$ uniform cells. The buoyancy field $\Theta$ obtained at time $t=20\,{\rm h}$ is shown in Figure \ref{Fig2}
(top row). As one can observe, the conducted mesh-refinement study indicates convergence and shows that the computed solutions are
oscillation-free despite having rather large gradients.
\begin{figure}[ht!]
\centering
\includegraphics[height=0.18\textheight]{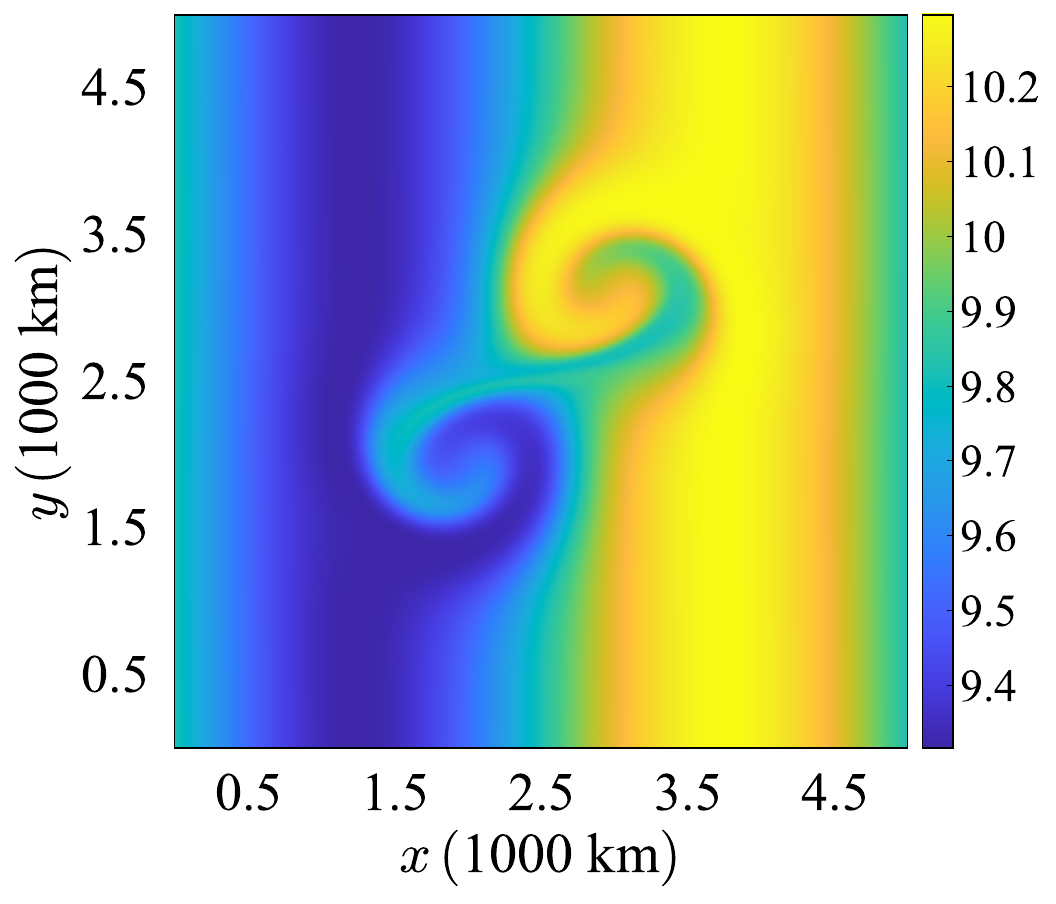}\hspace{1mm}
\includegraphics[height=0.18\textheight]{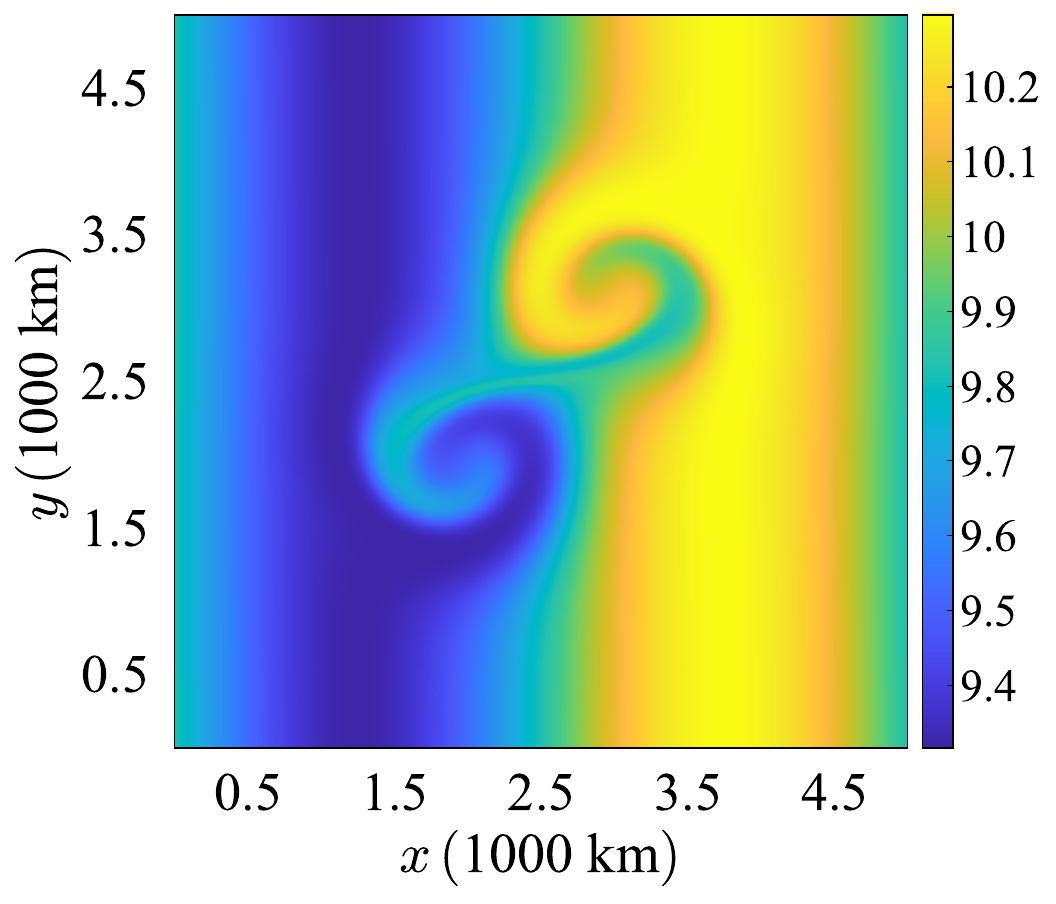}\hspace{1mm}
\includegraphics[height=0.18\textheight]{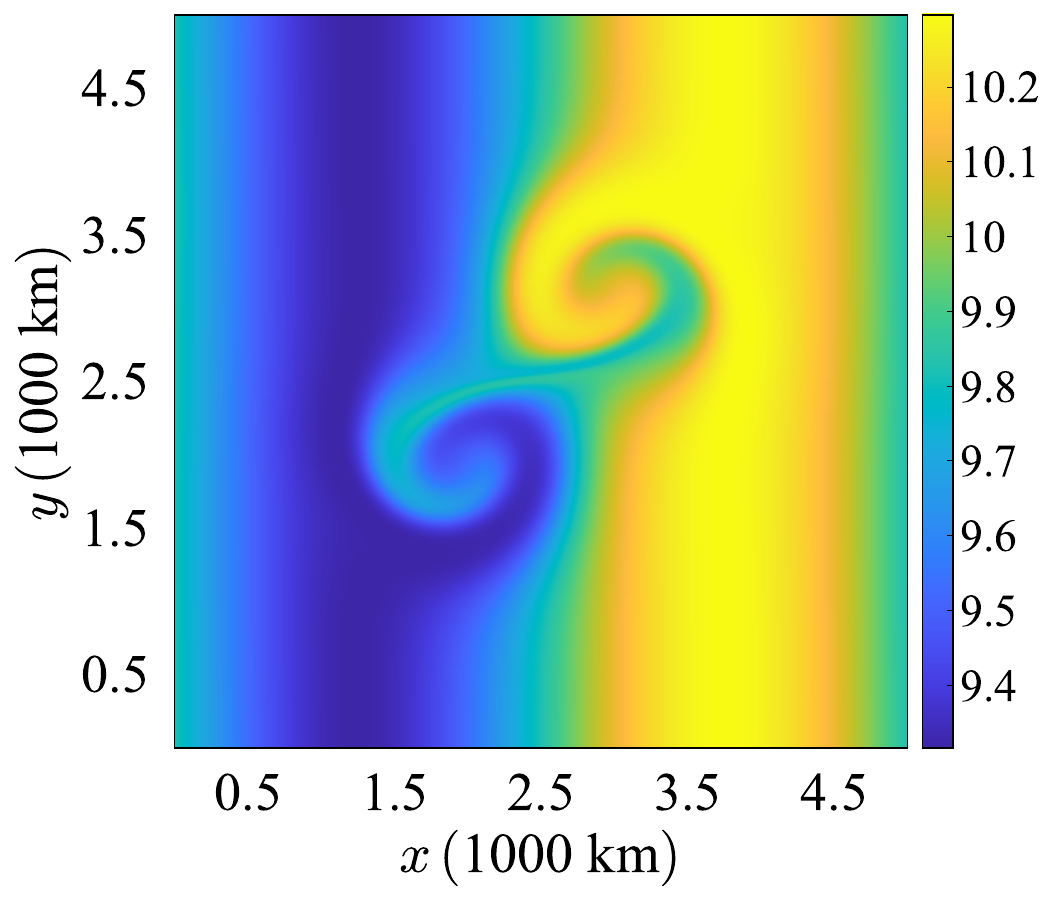}\\[1.5ex]
\includegraphics[height=0.18\textheight]{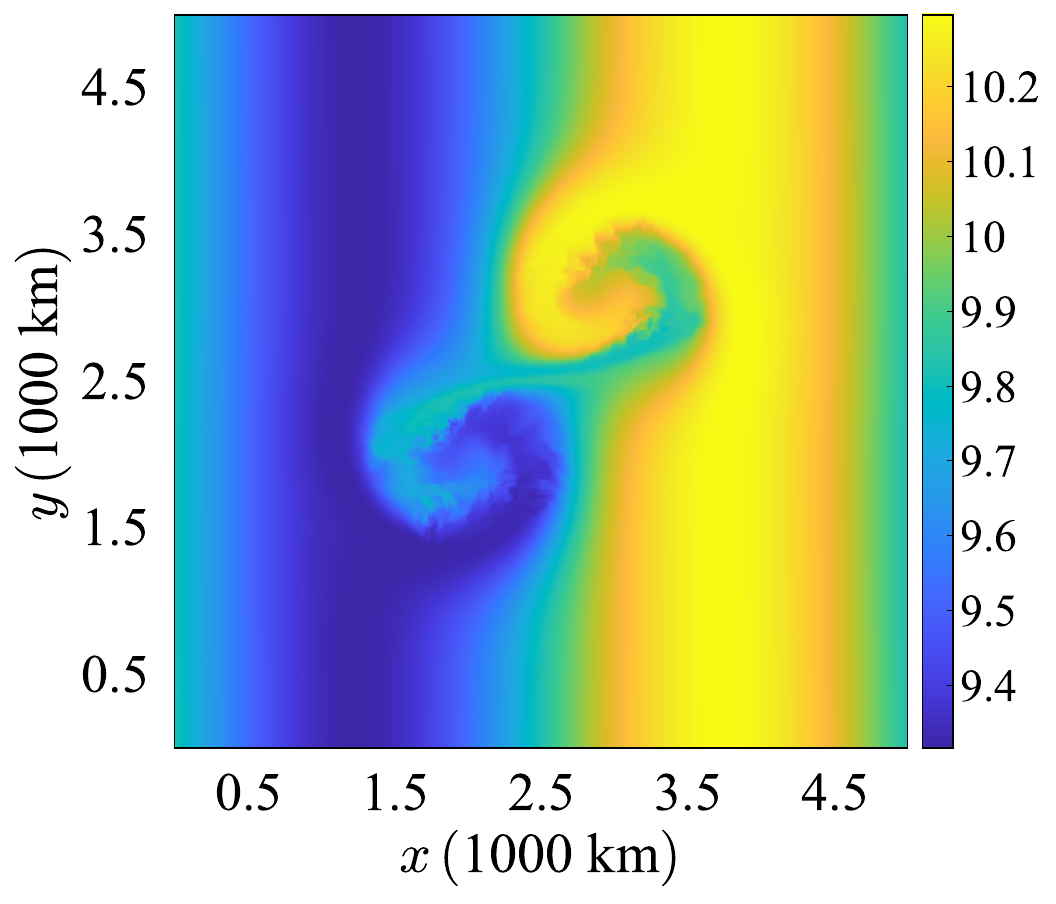}\hspace{1mm}
\includegraphics[height=0.18\textheight]{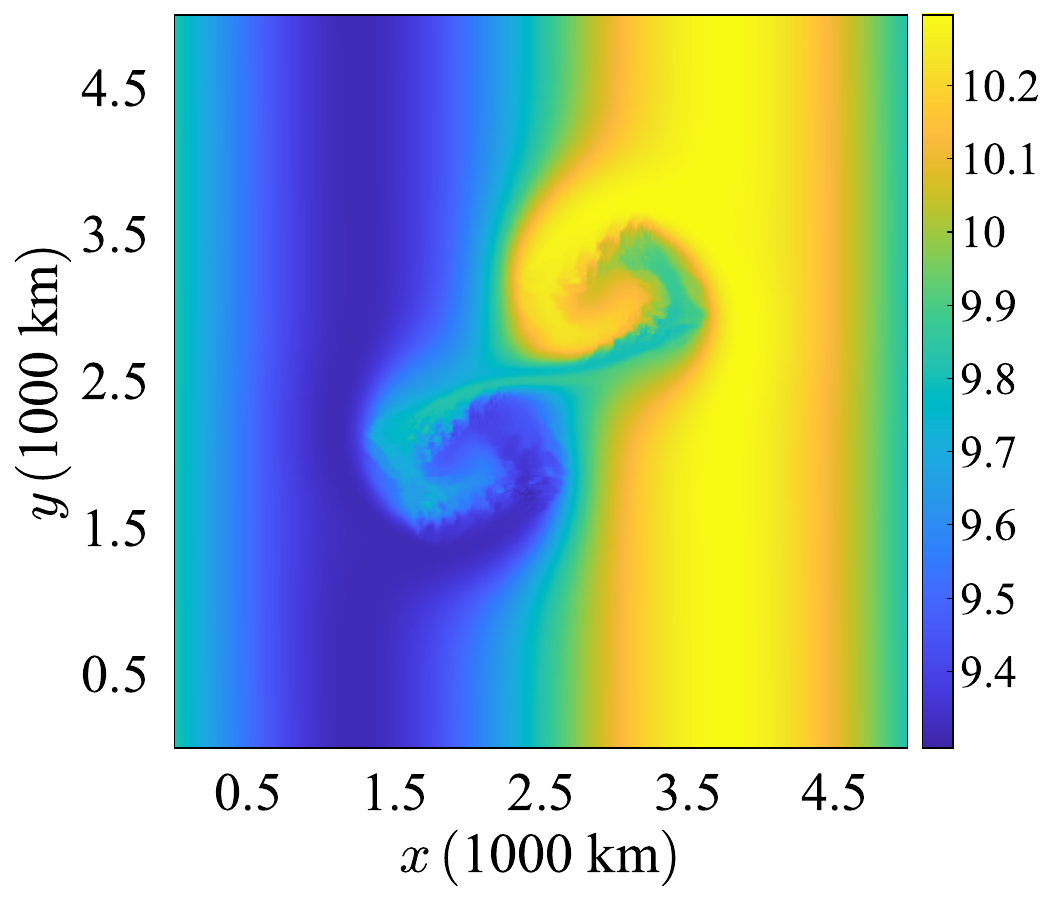}\hspace{1mm}
\includegraphics[height=0.18\textheight]{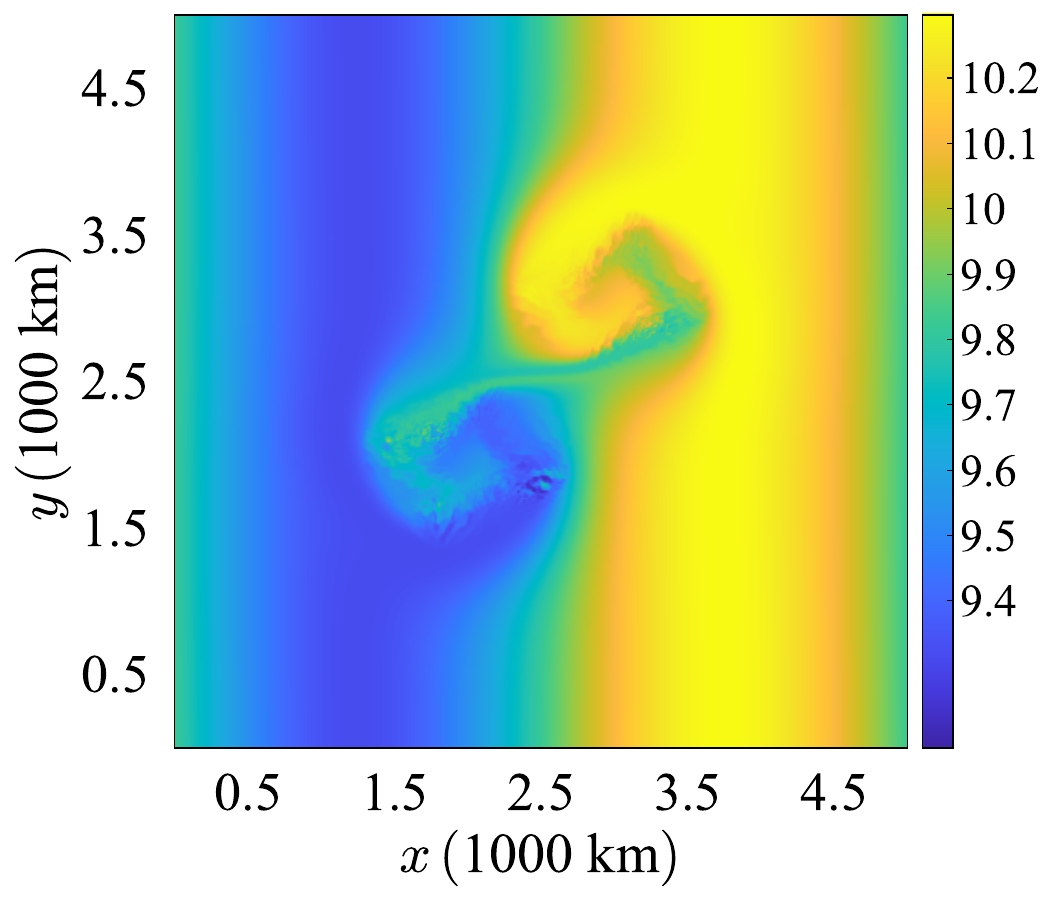}
\caption{\sf Example 5.3: Snapshots of the buoyancy field $\Theta$ computed by the AP DF-FV (top row) and simplified DF-FV (bottom row)
methods on $300\times300$ (left column), $400\times400$ (middle column), and $600\times600$ (right column) uniform meshes at time
$t=20\,{\rm h}$.\label{Fig2}}
\end{figure}
	
When working within the DF-FV framework, it is essential to demonstrate the importance of a proper treatment of the nonconservative terms 
$\widetilde B(\bV)\bV_x$ and $\widetilde C(\bV)\bV_y$ that appear in the nonstiff part of the nonconservative system \eref{3.4}. If a
simplified version of the PCCU discretization of \eref{3.34}, in which the terms that accounting for solution jumps across cell interfaces 
are omitted, that is, if we set $\widetilde{\bm B}_{\bm\Psi,\jR,k}=\widetilde{\bm C}_{\bm\Psi,j,\kR}\equiv\bm0~\forall j,k$, then the
resulting simplified DF-FV method fails to accurately capture the solution. This is illustrated in Figure \ref{Fig2} (bottom row), where one
can observe spurious oscillations that persist even as the mesh is refined. Moreover, when the simulations are continued with this
simplified approach, the numerical solution blows up at times $t=52.76\,{\rm h}$, $28.96\,{\rm h}$, and $23.75\,{\rm h}$ on $300\times300$,
$400\times400$, and $600\times600$ meshes, respectively.
	
Nevertheless, the proposed AP DF-FV method remains stable over long-term simulations. In Figure \ref{Fig3} (top row), we present the
buoyancy field $\Theta$ computed using the AP DF-FV method on a $600\times600$ uniform mesh at larger times $t=33\,{\rm h}\,45\,{\rm min}$,
$67\,{\rm h}\,30\,{\rm min}$, and $101\,{\rm h}\,15\,{\rm min}$. To verify that the obtained solution is sufficiently accurate, we also run
the Explicit scheme on the same uniform mesh and plot the corresponding results in Figure \ref{Fig3} (bottom row). We observe that both
methods capture the evolution of vortex structures and generate numerous smaller vortices over time. Moreover, the obtained AP DF-FV and
Explicit results are in good agreement. To further demonstrate this, we compute the solution by the AP DF-FV method on finer $900\times900$
uniform mesh and plot the 1-D slices of $\Theta$ along $x=2500\,{\rm km}$ at $t=67\,{\rm h}\,30\,{\rm min}$ and
$101\,{\rm h}\,15\,{\rm min}$ in Figure \ref{Fig4}. As one can see, the proposed AP DF-FV method demonstrates strong convergence and the
ability to accurately capture large gradient solution structures, confirming that the method performs well in the intermediate regime.
\begin{figure}[ht!]
\centering
\includegraphics[height=0.18\textheight]{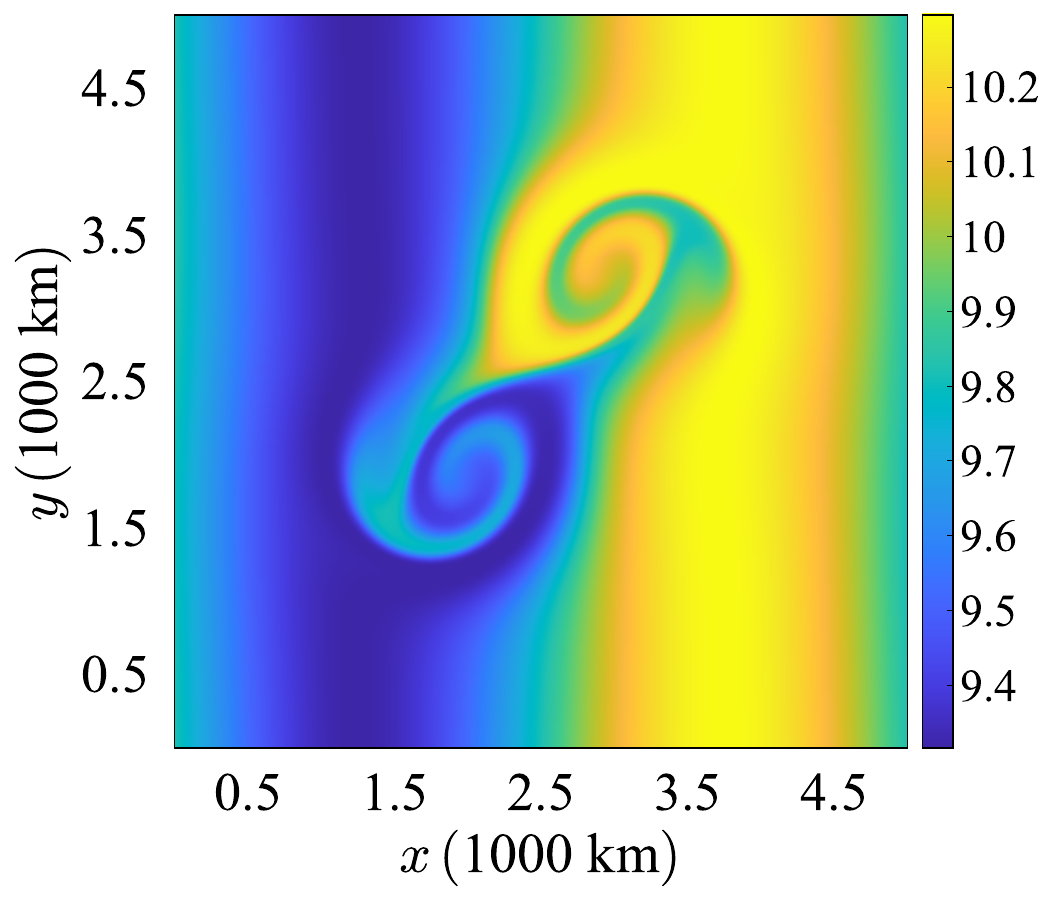}\hspace{1mm}
\includegraphics[height=0.18\textheight]{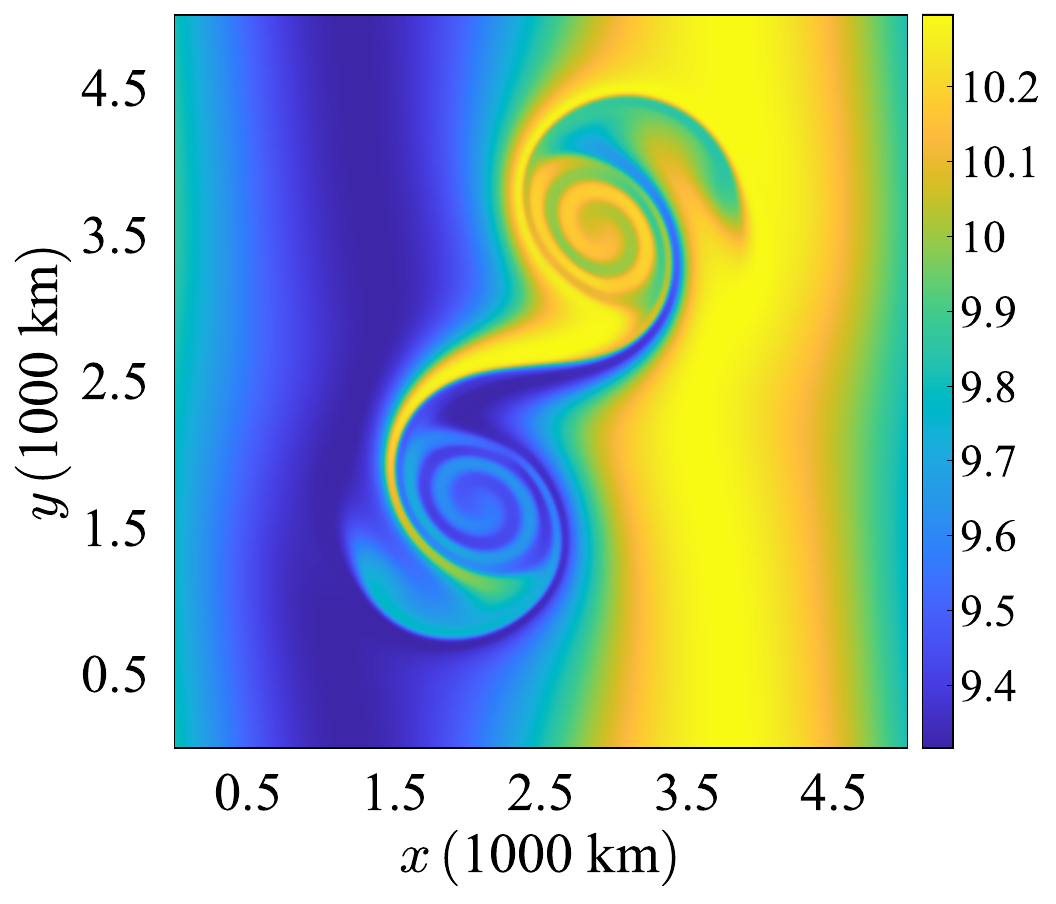}\hspace{1mm}
\includegraphics[height=0.18\textheight]{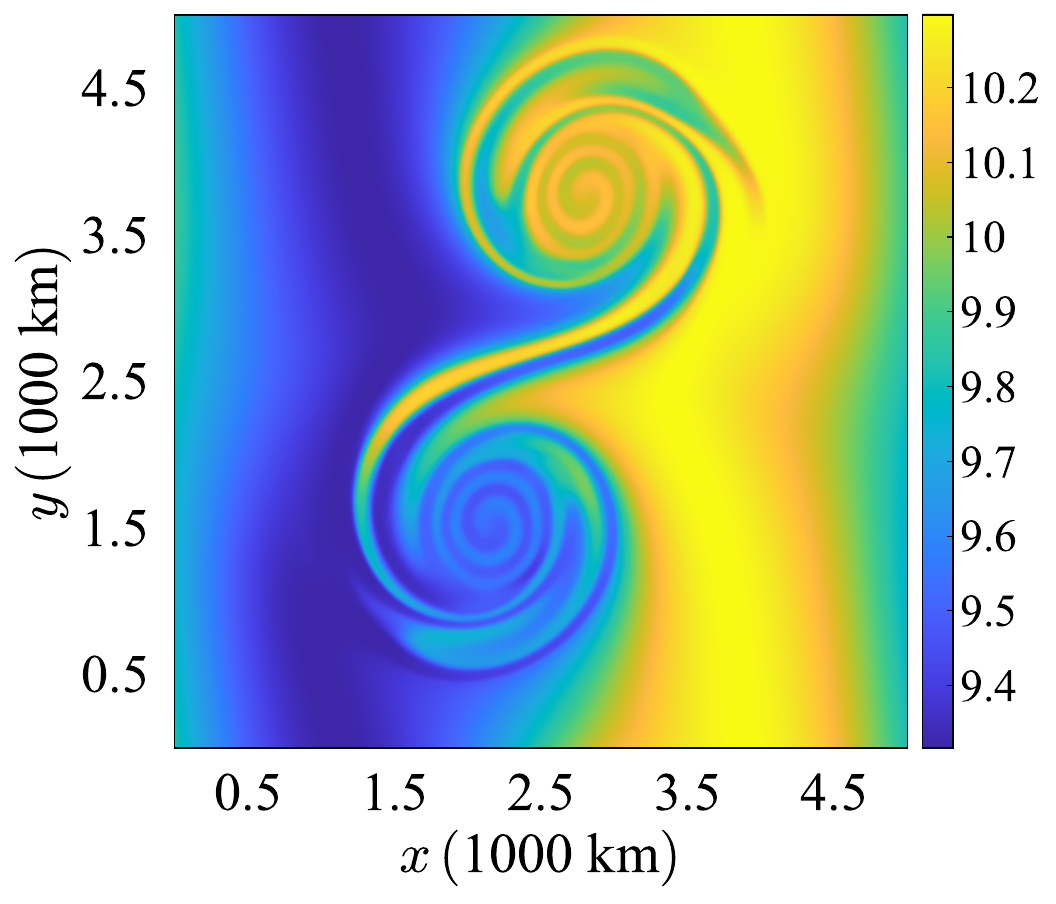}\\[1.5ex]
\includegraphics[height=0.18\textheight]{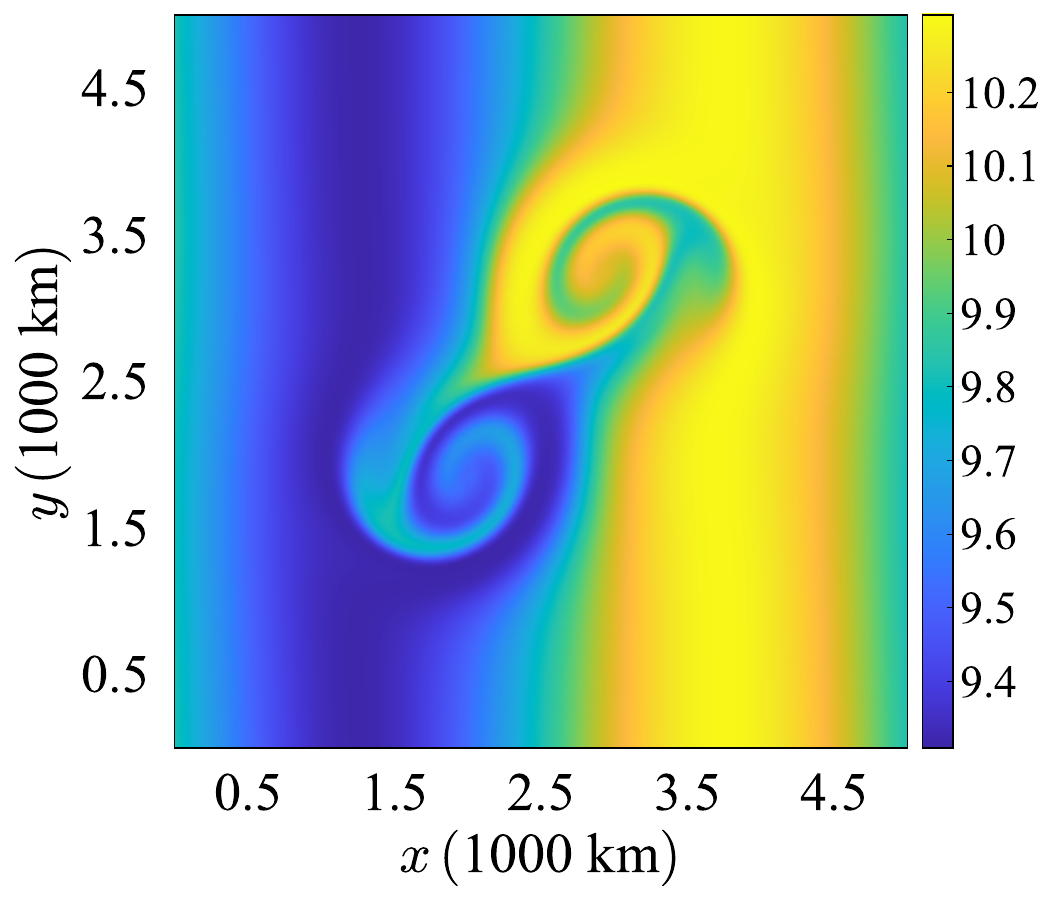}\hspace{1mm}
\includegraphics[height=0.18\textheight]{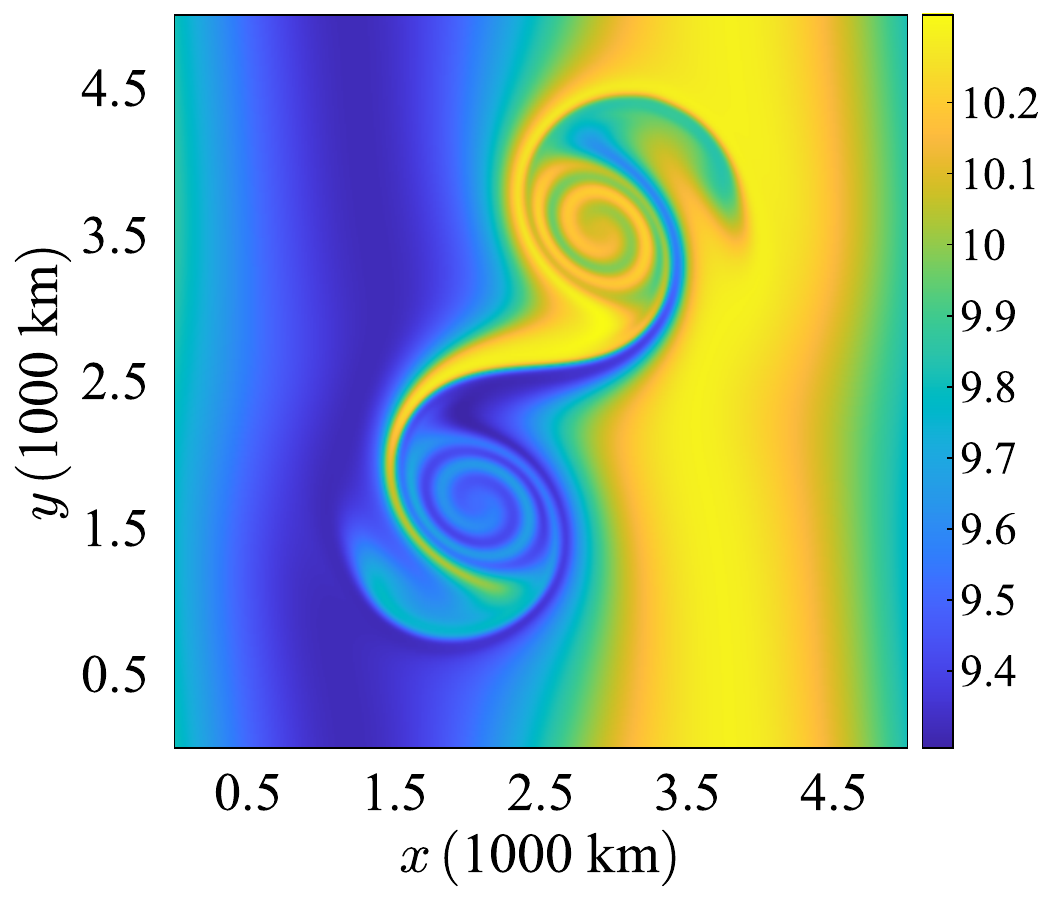}\hspace{1mm}
\includegraphics[height=0.18\textheight]{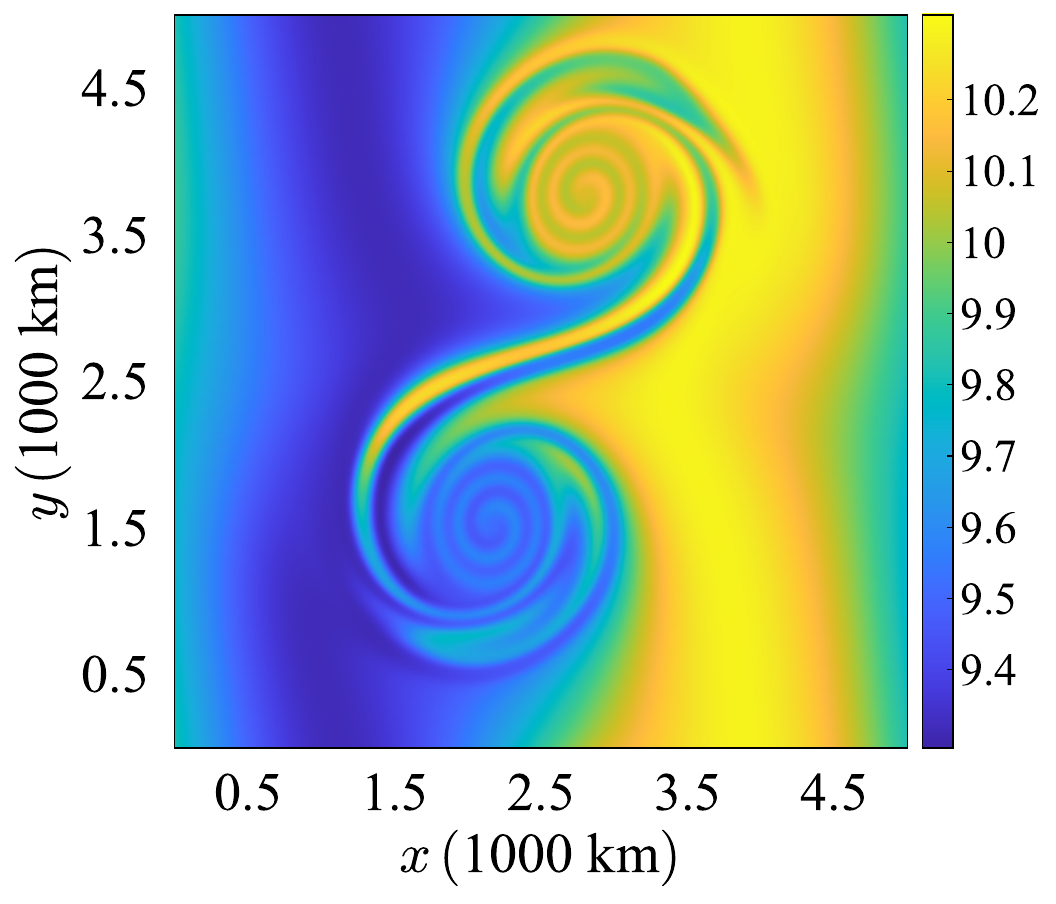}
\caption{\sf Example 5.3: Time snapshots of the buoyancy field $\Theta$ computed by the AP DF-FV (top row) and Explicit (bottom row) methods
at times $t=33\,{\rm h}\,45\,{\rm min}$ (left column), $67\,{\rm h}\,30\,{\rm min}$ (middle column), and $101\,{\rm h}\,15\,{\rm min}$
(right column).\label{Fig3}}
\end{figure}
\begin{figure}[ht!]
\centering
\includegraphics[height=0.22\textheight]{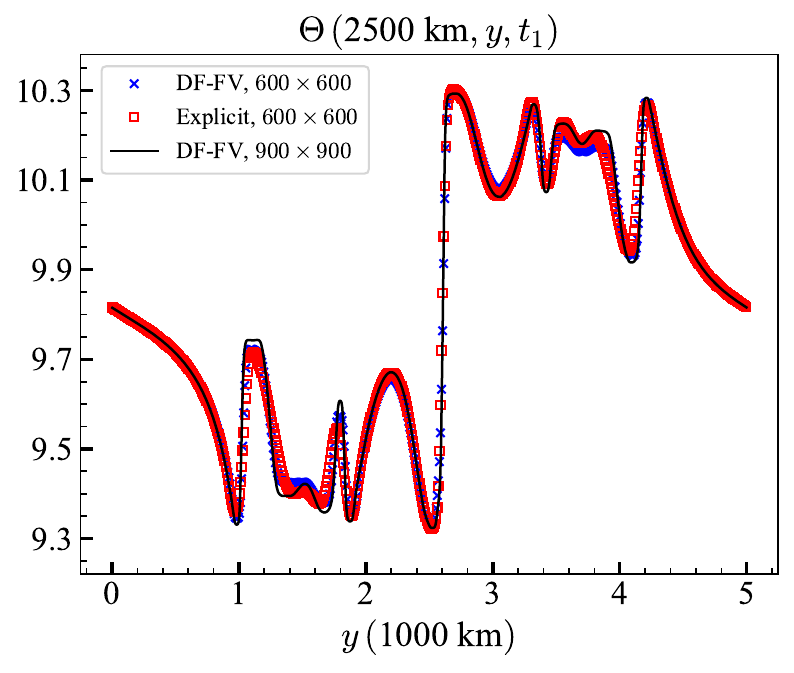}\hspace{1cm}
\includegraphics[height=0.22\textheight]{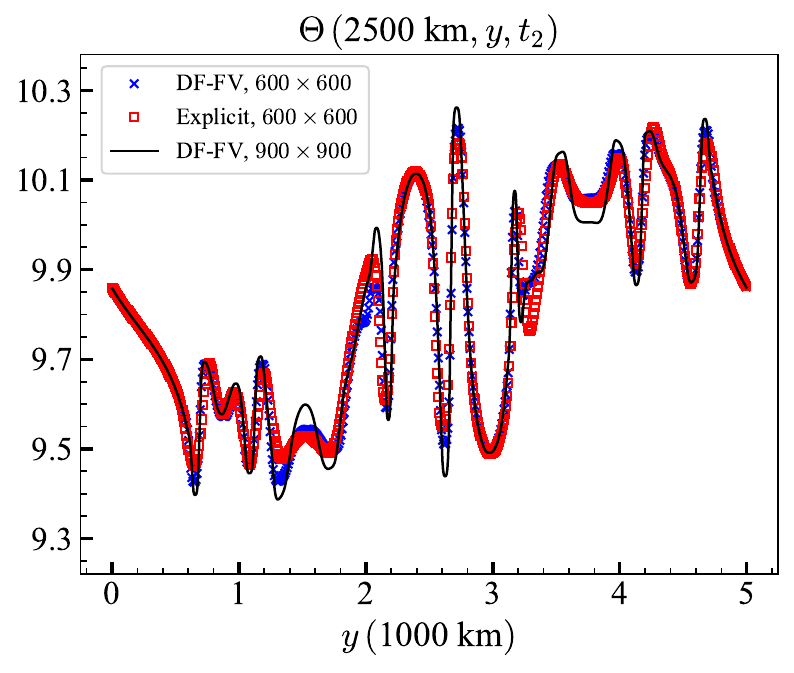}
\caption{\sf Example 5.3: 1-D slices of $\Theta$ along $x=2500\,{\rm km}$ at times $t_1=67\,{\rm h}\,30\,{\rm min}$ (left) and
$t_2=101\,{\rm h}\,15\,{\rm min}$ (right).\label{Fig4}}
\end{figure}
\end{example}
 
\begin{example}({\bf Shear Flow Evolution})
In this example, we study the evolution of shear flow, as described in \cite{Bauer_2019,Zhang_2024}, where the dimensional model is
considered with the Coriolis parameter $f(y)\equiv f_0=6.147\times10^{-5}\,{\rm s^{-1}}$ (that is, $\beta\equiv0$), the gravitational
acceleration $\g=9.80616\,{\rm m/s^2}$, the mean depth of the fluid layer $H_0=1076\,{\rm m}$, and the mean characteristic buoyancy scale is
$\Phi_0=30\,{\rm m}$. The initial data, prescribed in the computational domain $[0,L]\times[0,L]$ with the periodic boundary conditions, are
\begin{equation}
\begin{aligned}
&h(x,y,0)=H_0+\frac{6\Phi_0}{\pi}\Big(1+\frac{1}{10}\sin(4\pi\tilde x)\Big)\sin(2\pi\tilde y)
\exp\!\left(\hf-\frac{72}{\pi^2}\cos^2(\pi\tilde y)\right),\\
&u(x,y,0)=-\frac{12\g\,\Phi_0}{fL}\Big(1+\frac{1}{10}\sin(4\pi\tilde x)\Big)
\Big(\cos(2\pi\tilde y)+\frac{36}{\pi^2}\sin^2(2\pi\tilde y)\Big)\exp\!\left(\hf-\frac{72}{\pi^2}\cos^2(\pi\tilde y)\right),\\
&v(x,y,0)=\frac{12\g\,\Phi_0}{5fL}\cos(4\pi\tilde x)\sin(2\pi\tilde y)\exp\!\left(\hf-\frac{72}{\pi^2}\cos^2(\pi\tilde y)\right),\\
&\Theta(x,y,0)=\g\Big(1+\frac{1}{20}\cos(2\pi\tilde x)\sin(2\pi\tilde y)\Big),
\end{aligned}
\label{5.2}
\end{equation}
where $\tilde x=x/L$, $\tilde y=y/L$, and $L=5000\,{\rm km}$. We take $\Theta_0=\g$, $L_0=L/3$, and $V_0=\Theta_0\Phi_0/(L_0f_0)$ as the
reference values and reformulate the problem in the nondimensional form \eref{2.20} (or \eref{3.1}). Then, the dimensionless parameters
$\Ro$, $\bar\beta$, and $\Bu$ become
\begin{equation*}
\Ro=\eps=\frac{V_0}{L_0f_0}\approx0.028,\quad\bar\beta=0,\quad\mbox{and}\quad\Bu=\nu=\frac{\Theta_0H_0}{(L_0f_0)^2}\approx1.005,
\end{equation*}
indicating that the flow is in the TQG regime. In this case, superimposed perturbations on the initial zonal jet in the $x$-direction evolve
into two pairs of counter-rotating vortices. 
	
It should be observed that in \cite{Bauer_2019,Zhang_2024} a simpler RSW system was considered, and therefore, we first replace the initial
$\Theta$ in \eref{5.2} with $\Theta(x,y,0)\equiv\g$, for which the TRSW system reduces to the RSW one, and compute its solution by the AP
DF-FV and Explicit methods until the final time $t=10\,{\rm d}$ on a $300\times300$ uniform mesh. The obtained vorticities $\omega$ are
shown in Figure \ref{Fig5} (left and middle). To verify the accuracy of the obtained results, we compare them with a reference solution (see
the right panel of Figure \ref{Fig5}) obtained by the explicit fifth-order well-balanced weighted essentially non-oscillatory (WENO) scheme
introduced in \cite{Zhang_2024}. The numerical solution computed by the AP DF-FV method clearly shows a better agreement with the reference
solution, indicating higher numerical accuracy, while the Explicit scheme suffers from ${\cal O}(\eps^{-1})$ numerical dissipation, which
smears out small-scale vortical structures and limits its accuracy in low-Rossby-number regimes. 
\begin{figure}[ht!]
\centering
\includegraphics[height=0.18\textheight]{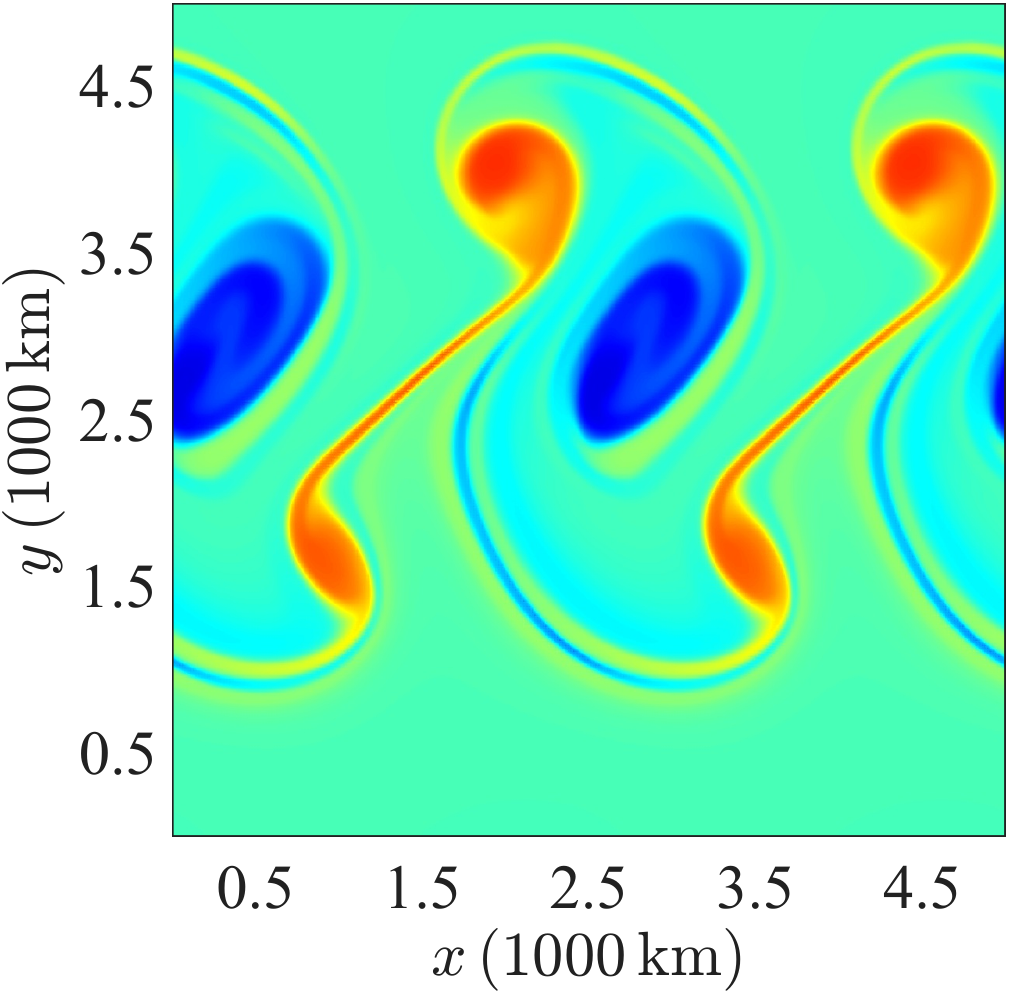}\hspace{5mm}
\includegraphics[height=0.18\textheight]{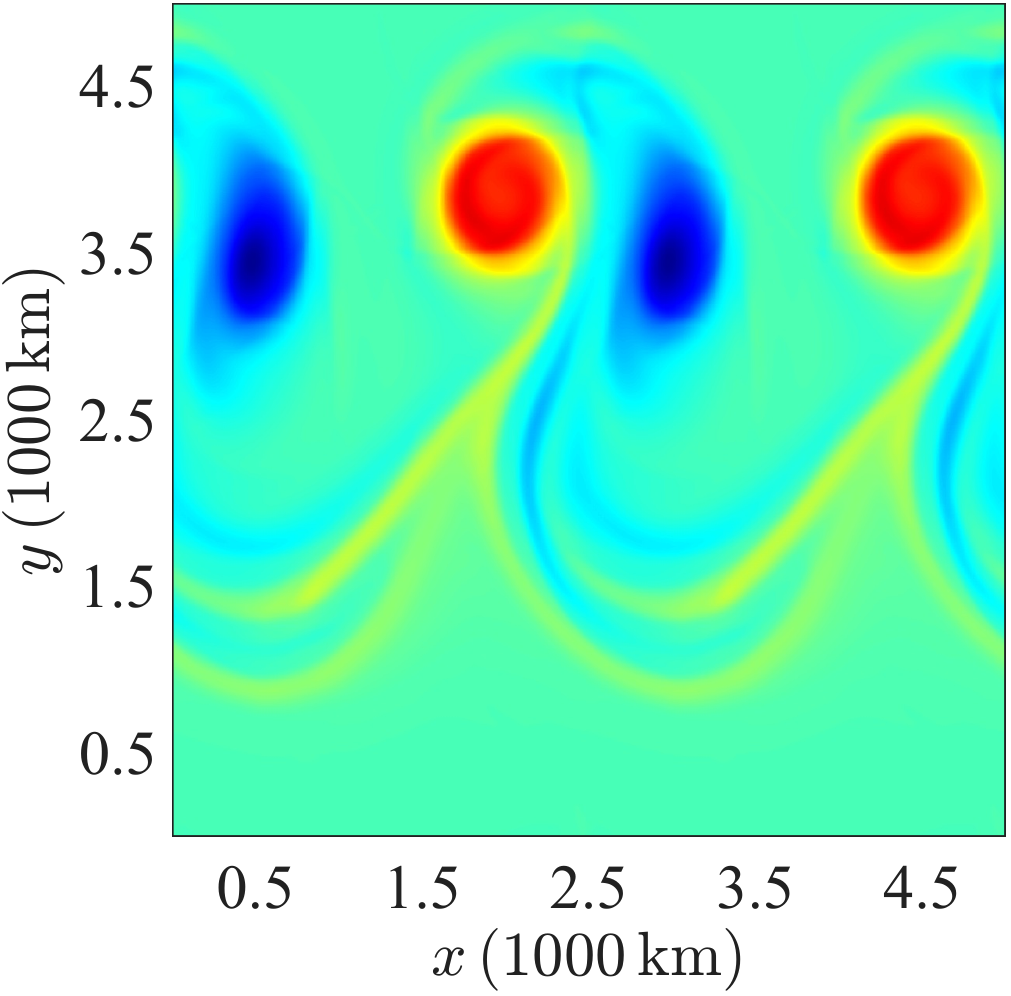}\hspace{5mm}
\includegraphics[height=0.18\textheight]{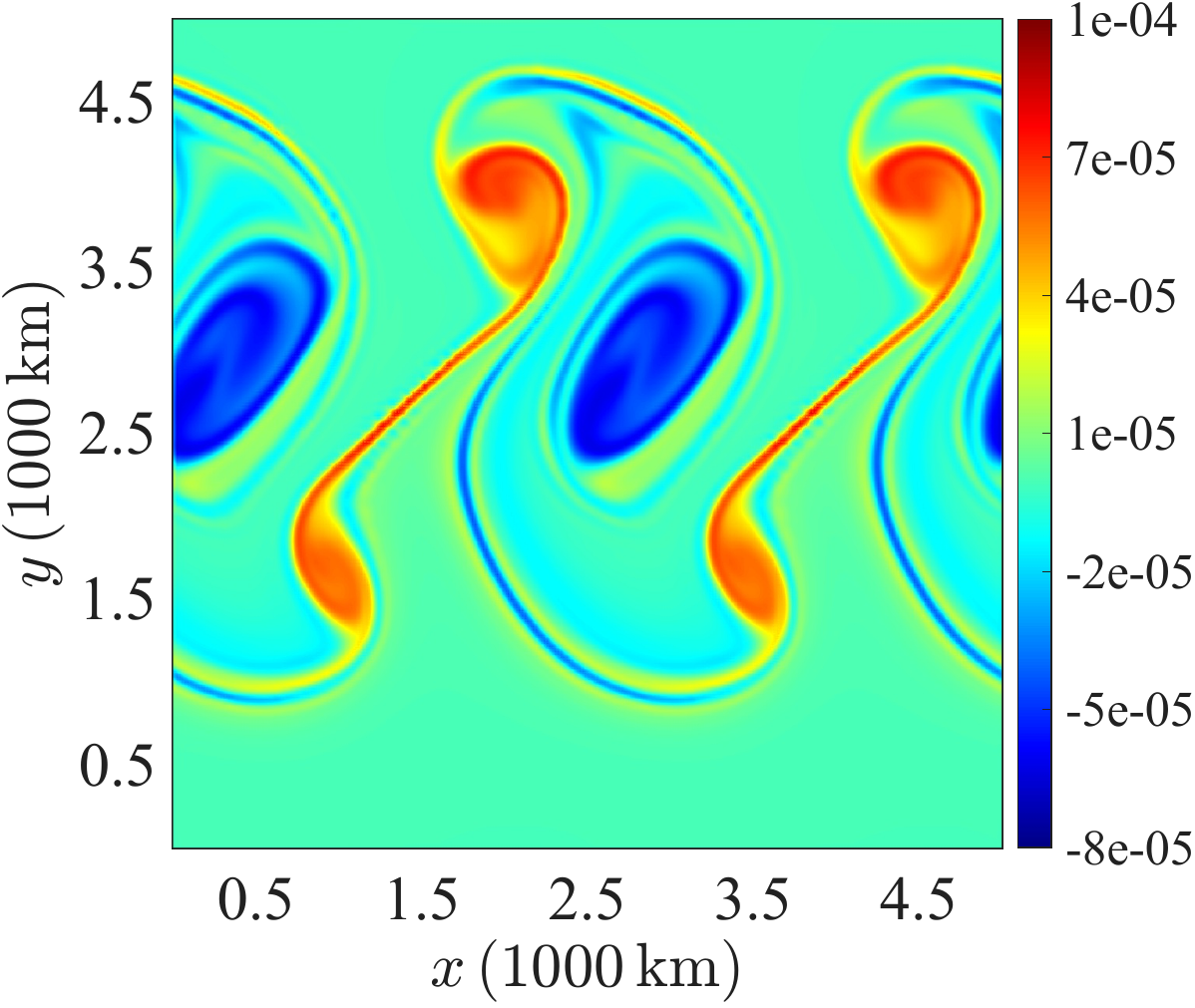}
\caption{\sf Example 5.4 (RSW system): Snapshots of the vorticity $\omega$ computed by the AP DF-FV (left), Explicit (middle), and
fifth-order WENO (right) schemes on a uniform $300\times300$ mesh.\label{Fig5}}
\end{figure}
	
We then run the same simulations for the TRSW system (that is, for the initial data \eref{5.2}) on finer meshes with $400\times400$ and 
$600\times600$ uniform cells, and plot the resulting vorticities in Figure \ref{Fig6} (left and middle). As one can see, the AP DF-FV method
exhibits excellent convergence, whereas the Explicit scheme yields results that vary with mesh resolution. To verify the accuracy of the AP
DF-FV method, a reference solution computed using the explicit fifth-order WENO scheme is shown in Figure \ref{Fig6} (right). Comparison
with this reference solution demonstrates that the AP DF-FV method clearly resolves the fine vortex structures. Similar to Example 5.3, the
vortices continue to rotate under the influence of the Coriolis force, generating an increasing number of smaller vortices over time. On the
finer $600\times600$ mesh, the AP DF-FV method further improves the achieved resolution. The same observations and conclusions apply to the
buoyancy field $\Theta$ shown in Figure \ref{Fig7}.
\begin{figure}[ht!]
\centering
\includegraphics[height=0.18\textheight]{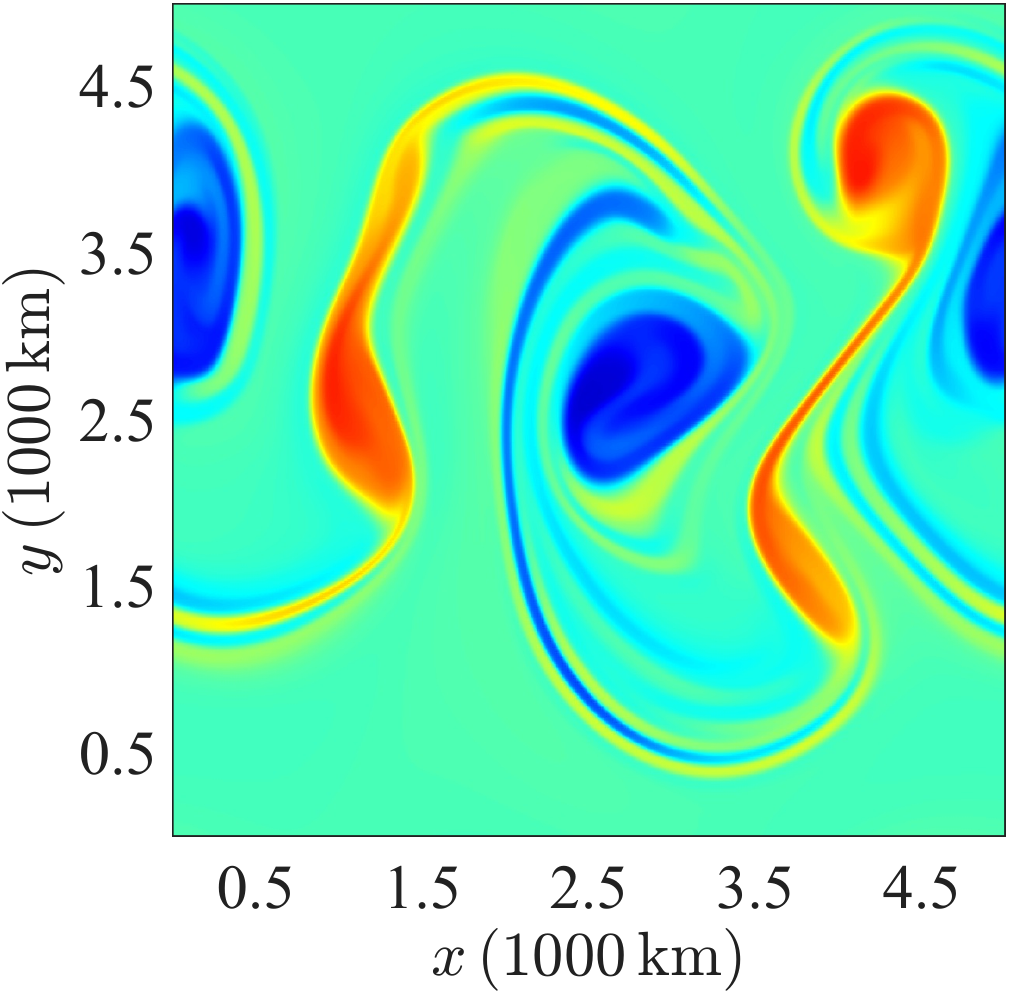}\hspace{5mm}
\includegraphics[height=0.18\textheight]{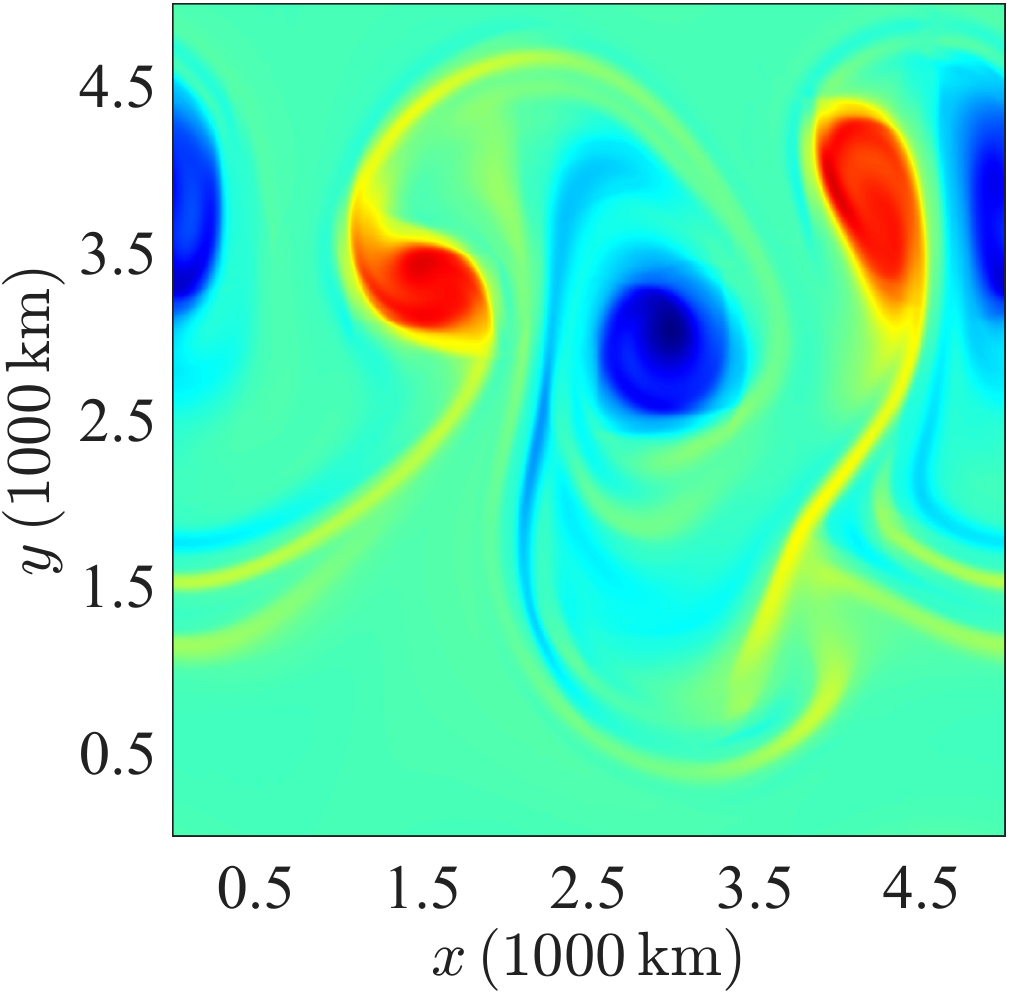}\hspace{5mm}
\includegraphics[height=0.18\textheight]{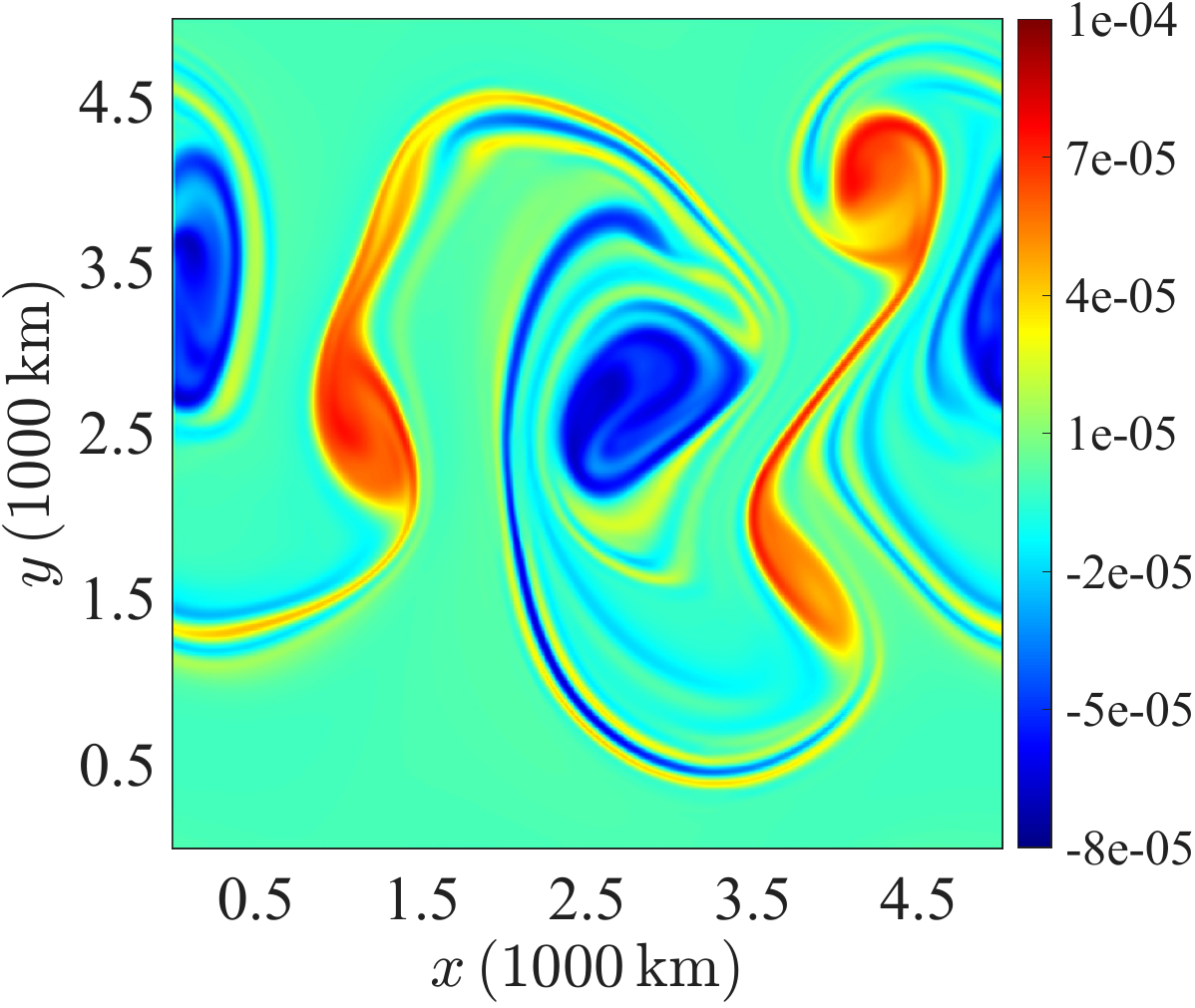}\\[1.5ex]
\includegraphics[height=0.18\textheight]{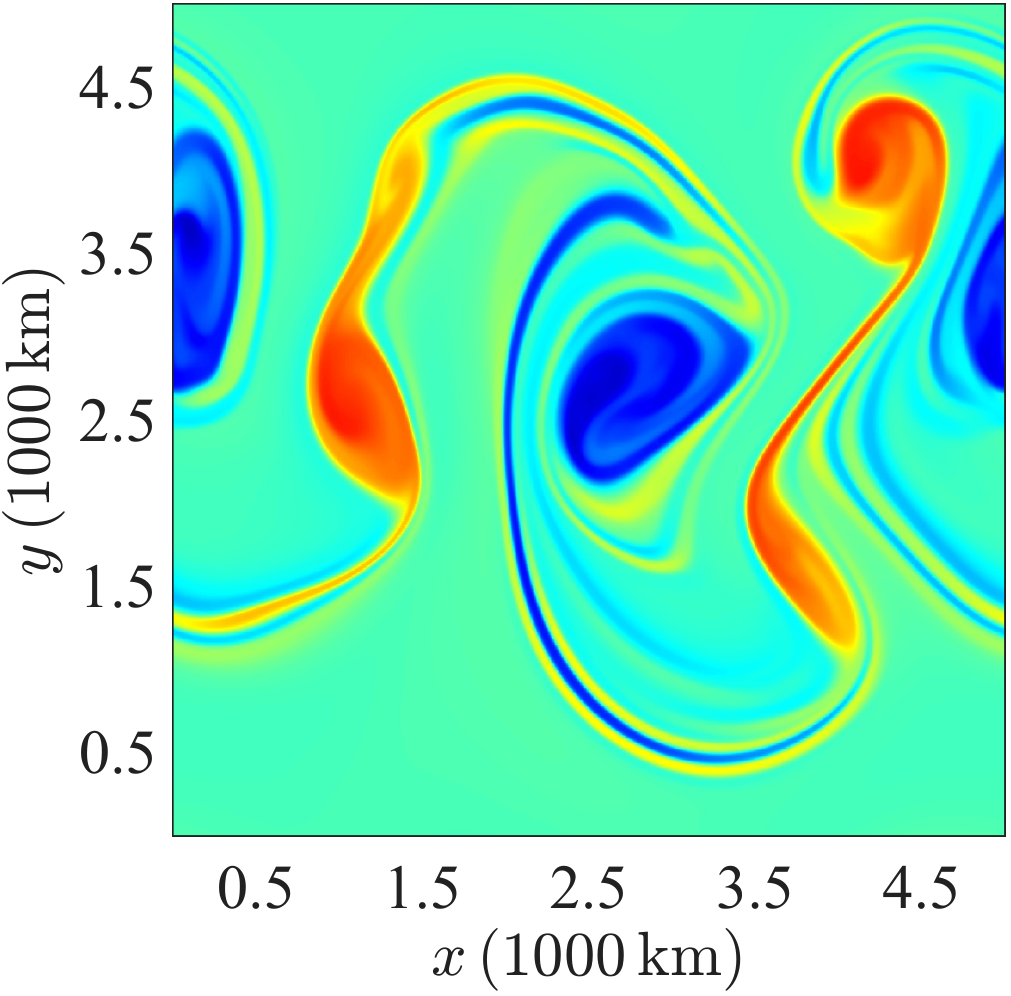}\hspace{5mm}
\includegraphics[height=0.18\textheight]{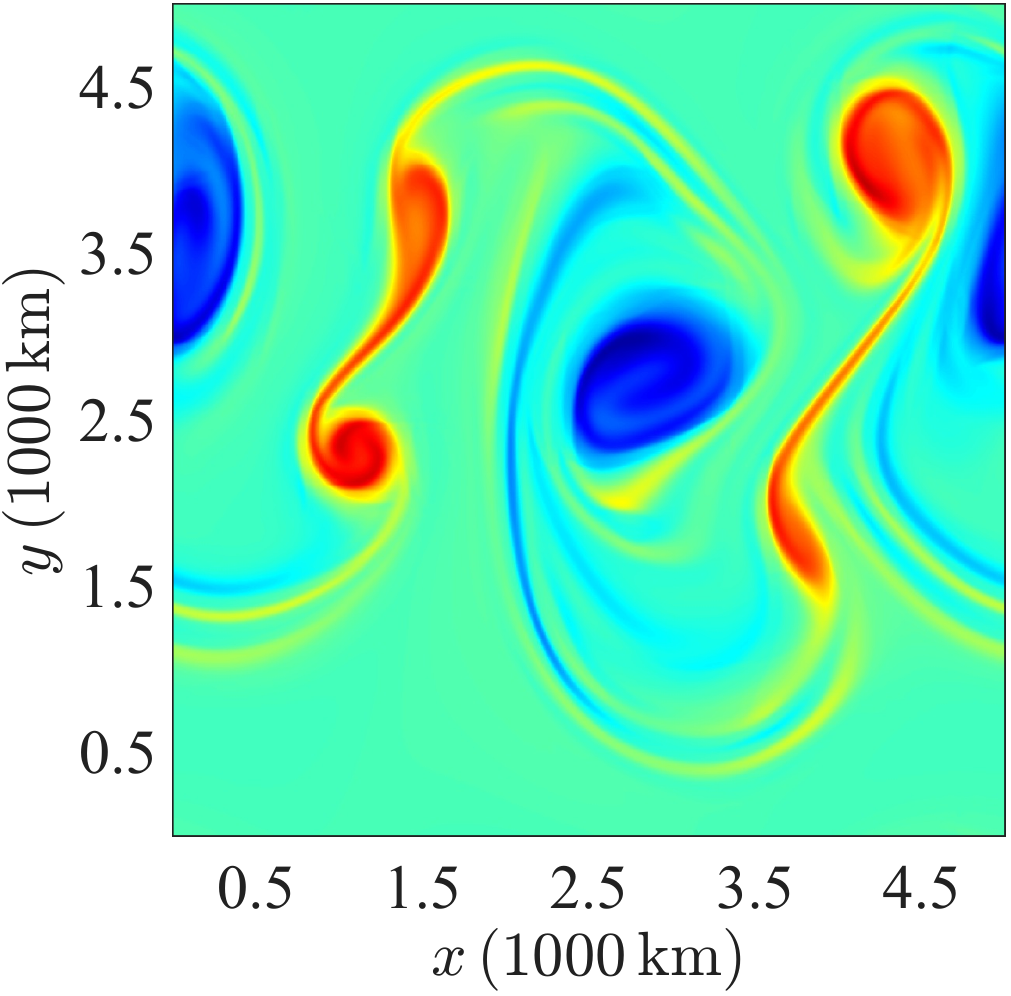}\hspace{5mm}
\includegraphics[height=0.18\textheight]{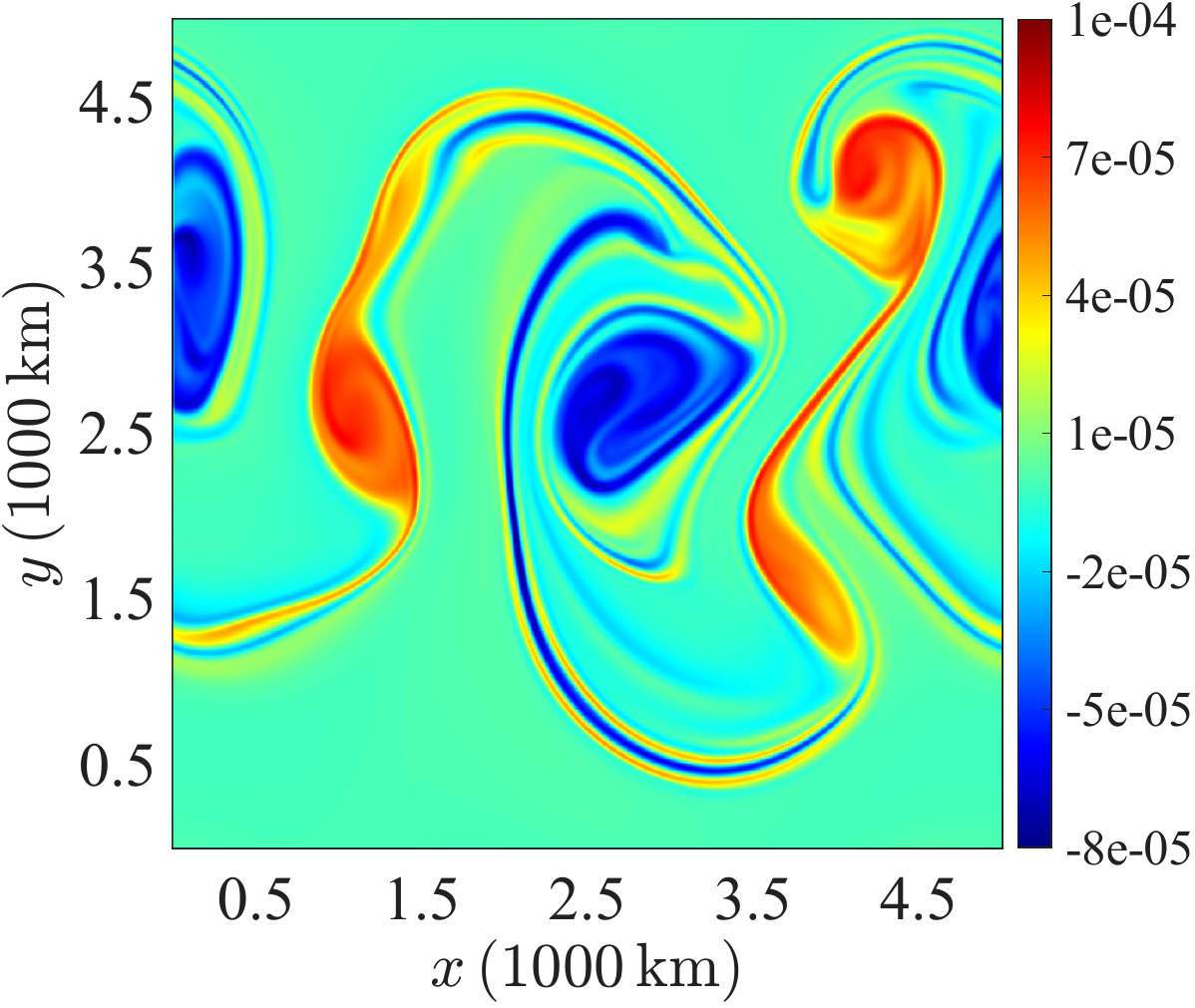}
\caption{\sf Example 5.4 (TRSW system): Snapshots of the vorticity $\omega$ computed by the AP DF-FV (left column), Explicit (middle
column), and fifth-order WENO (right column) schemes on $400\times400$ (top row) and $600\times600$ (bottom row) uniform meshes.
\label{Fig6}}
\end{figure}
\begin{figure}[ht!]
\centering
\includegraphics[height=0.18\textheight]{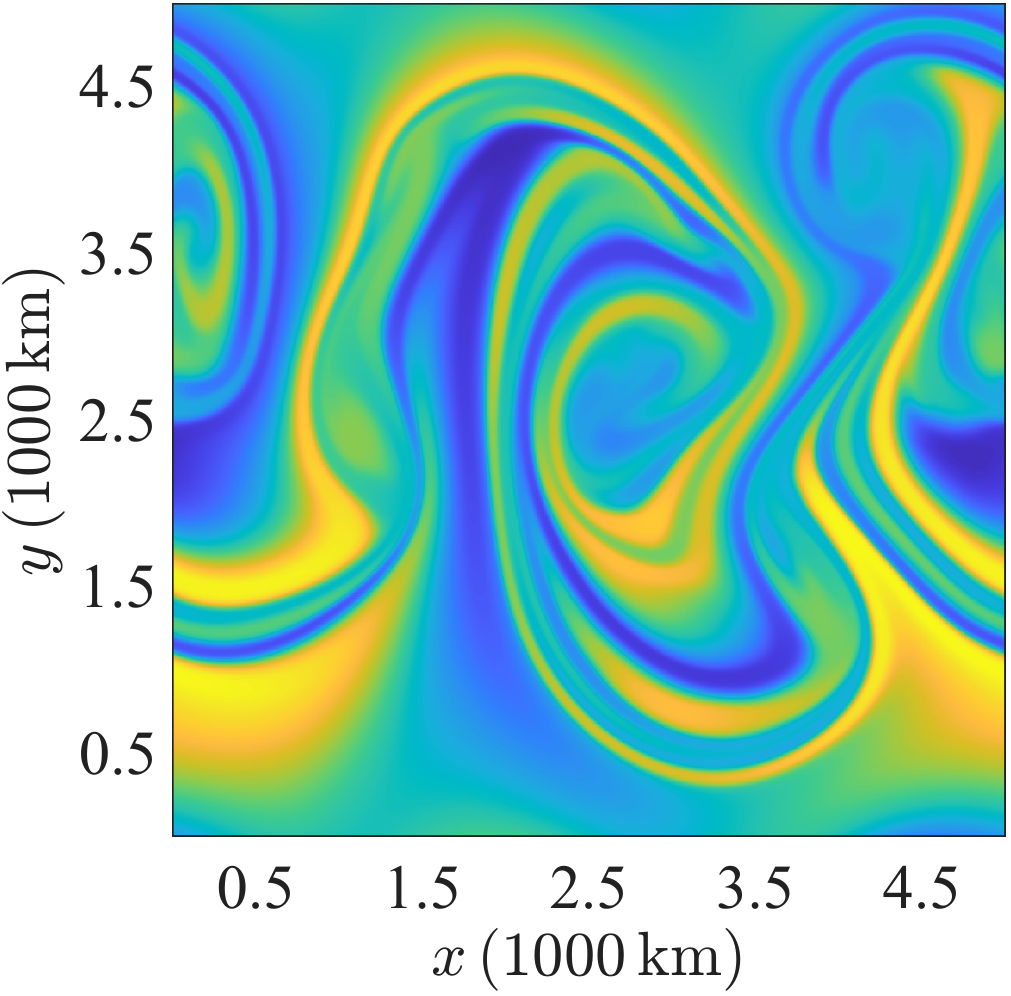}\hspace{5mm}
\includegraphics[height=0.18\textheight]{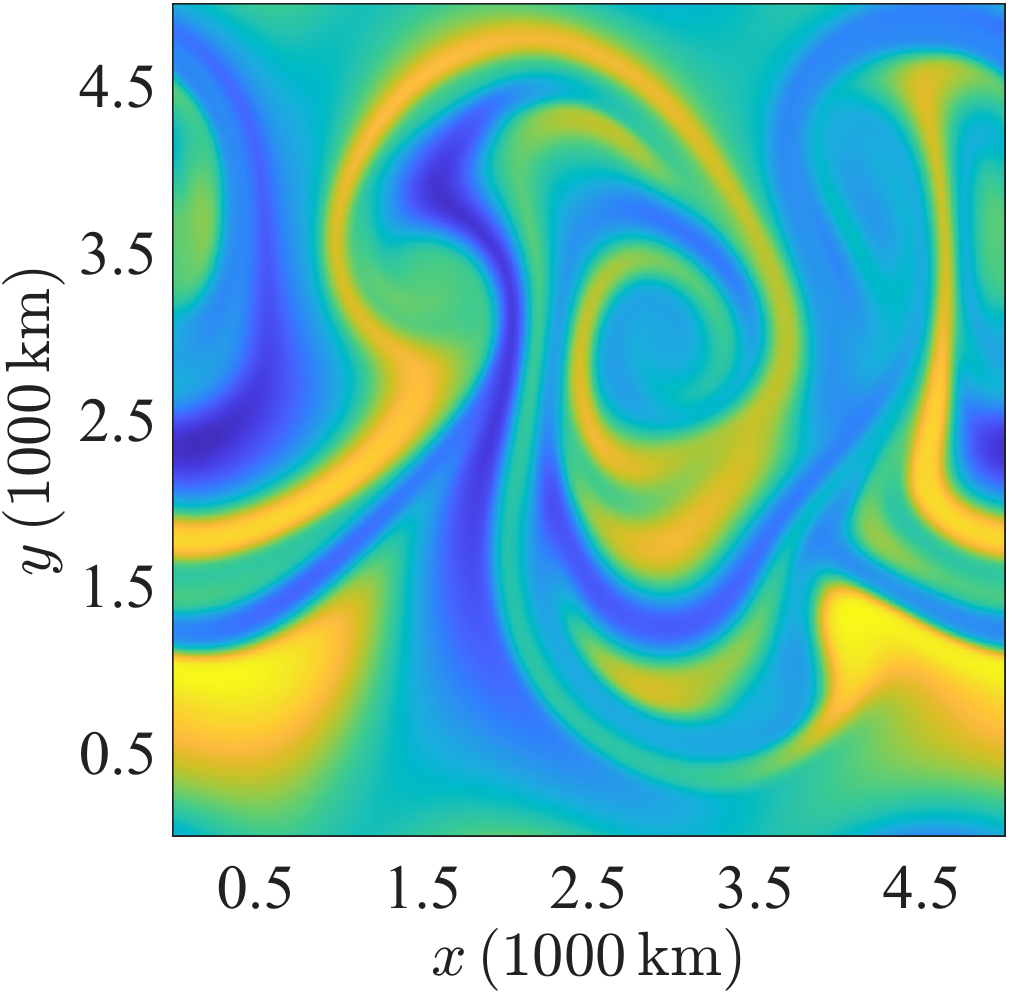}\hspace{5mm}
\includegraphics[height=0.18\textheight]{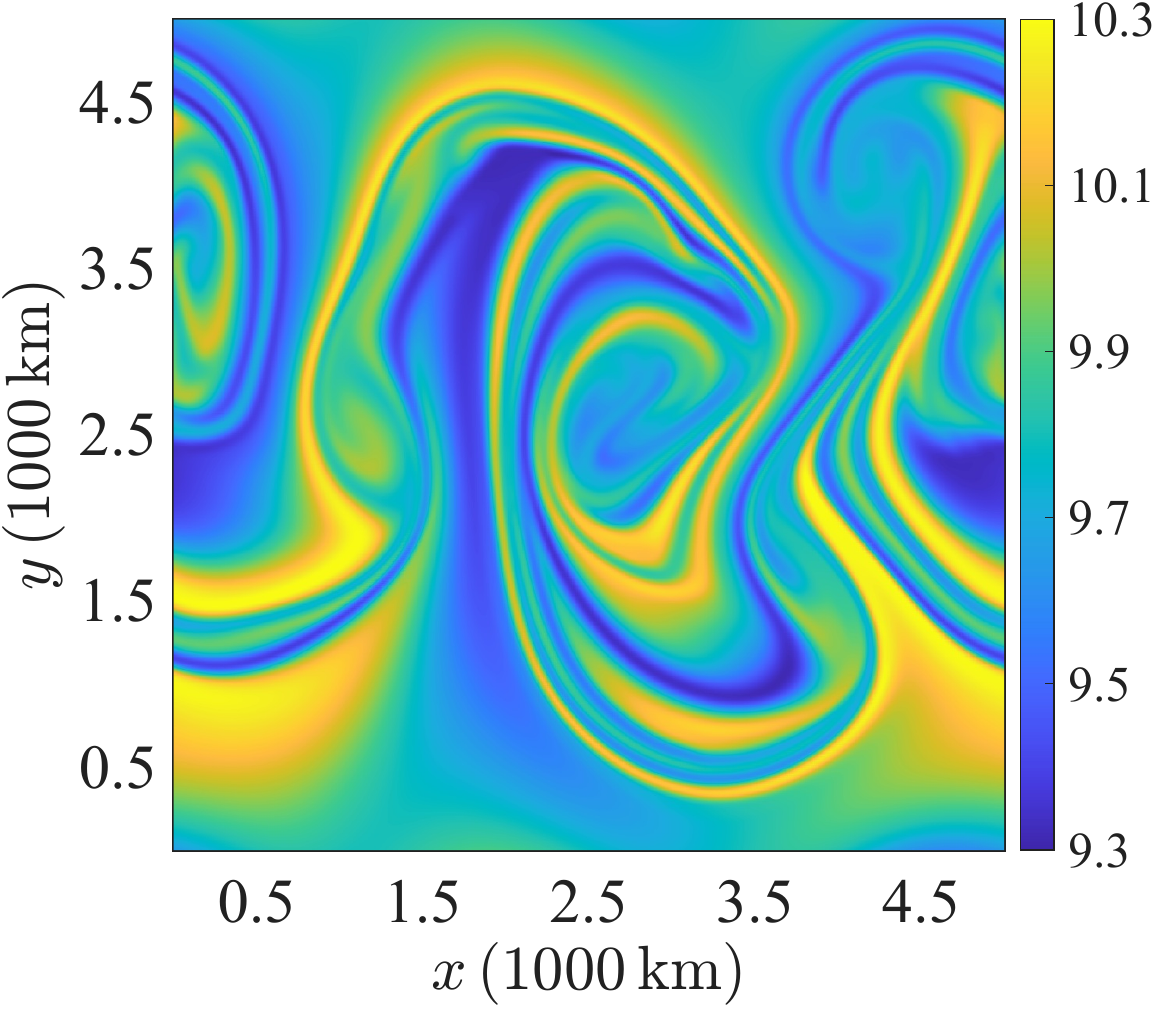}\\[1.5ex]
\includegraphics[height=0.18\textheight]{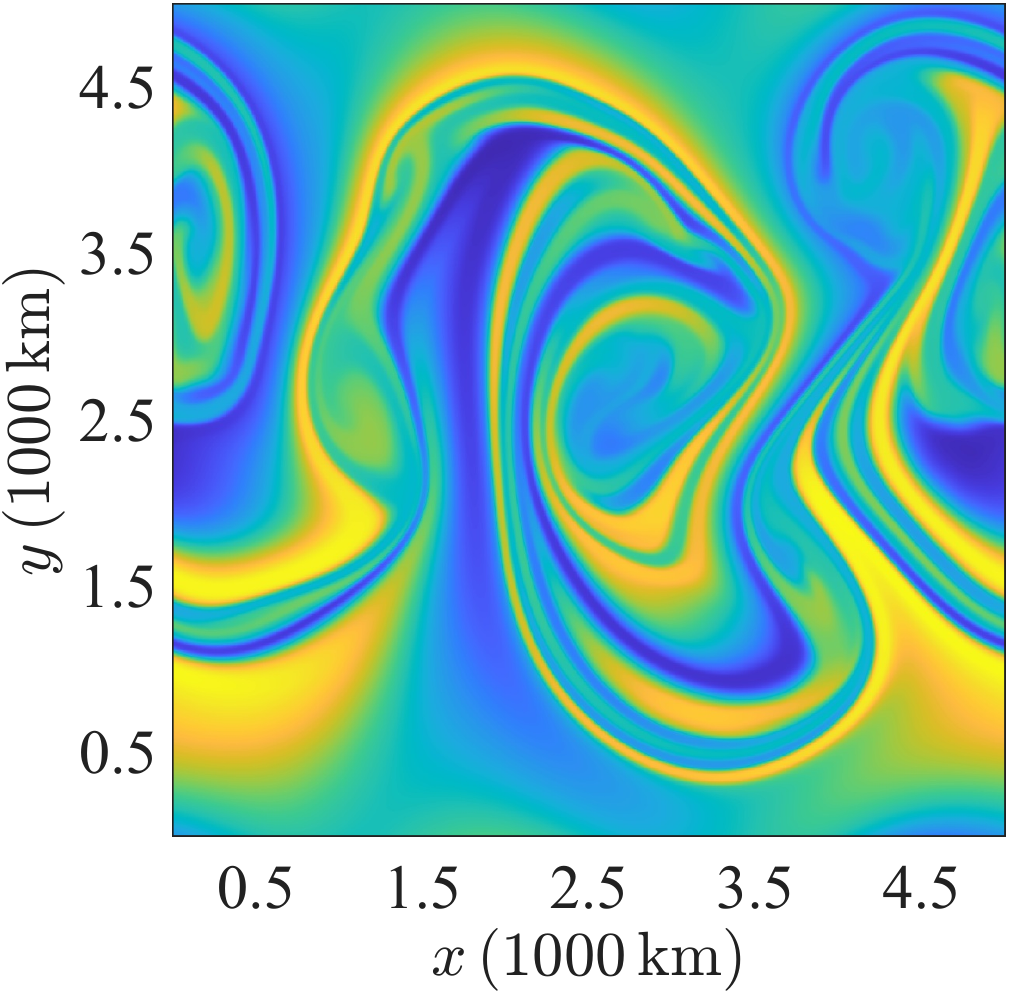}\hspace{5mm}
\includegraphics[height=0.18\textheight]{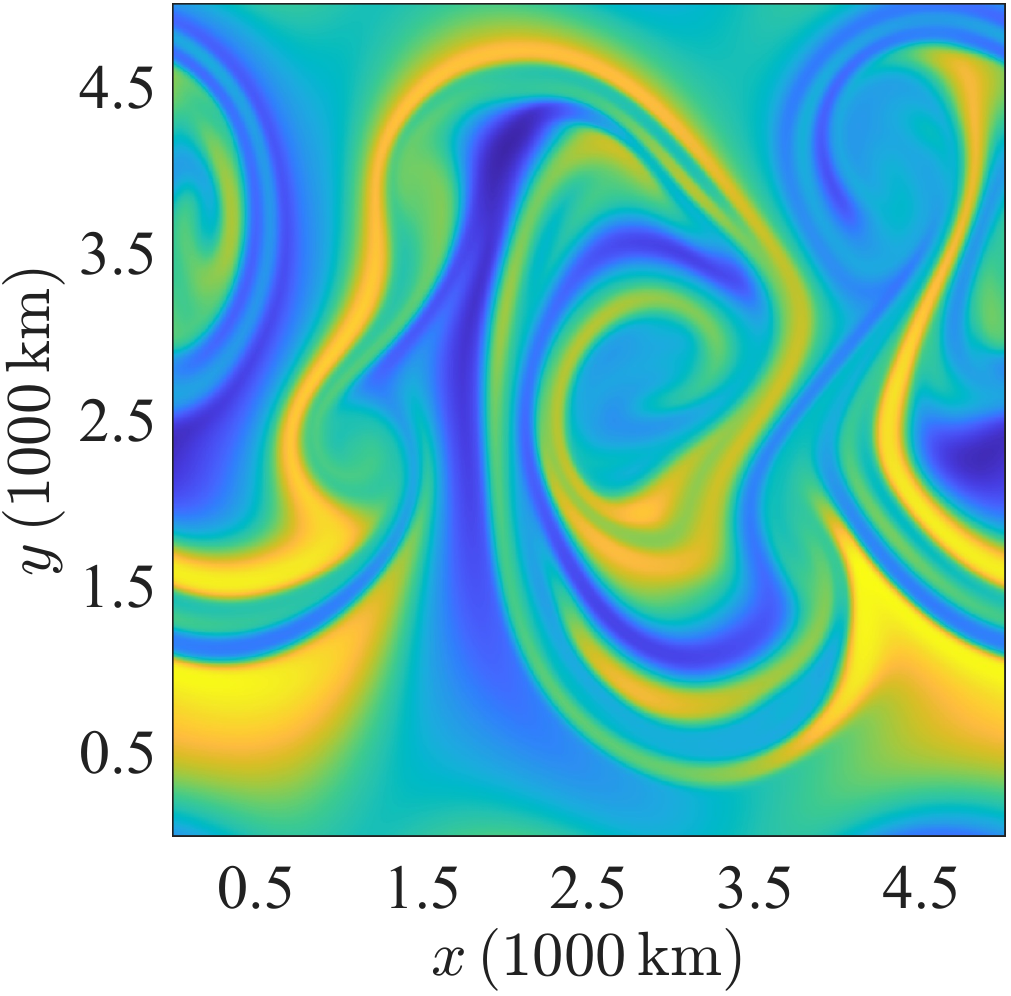}\hspace{5mm}
\includegraphics[height=0.18\textheight]{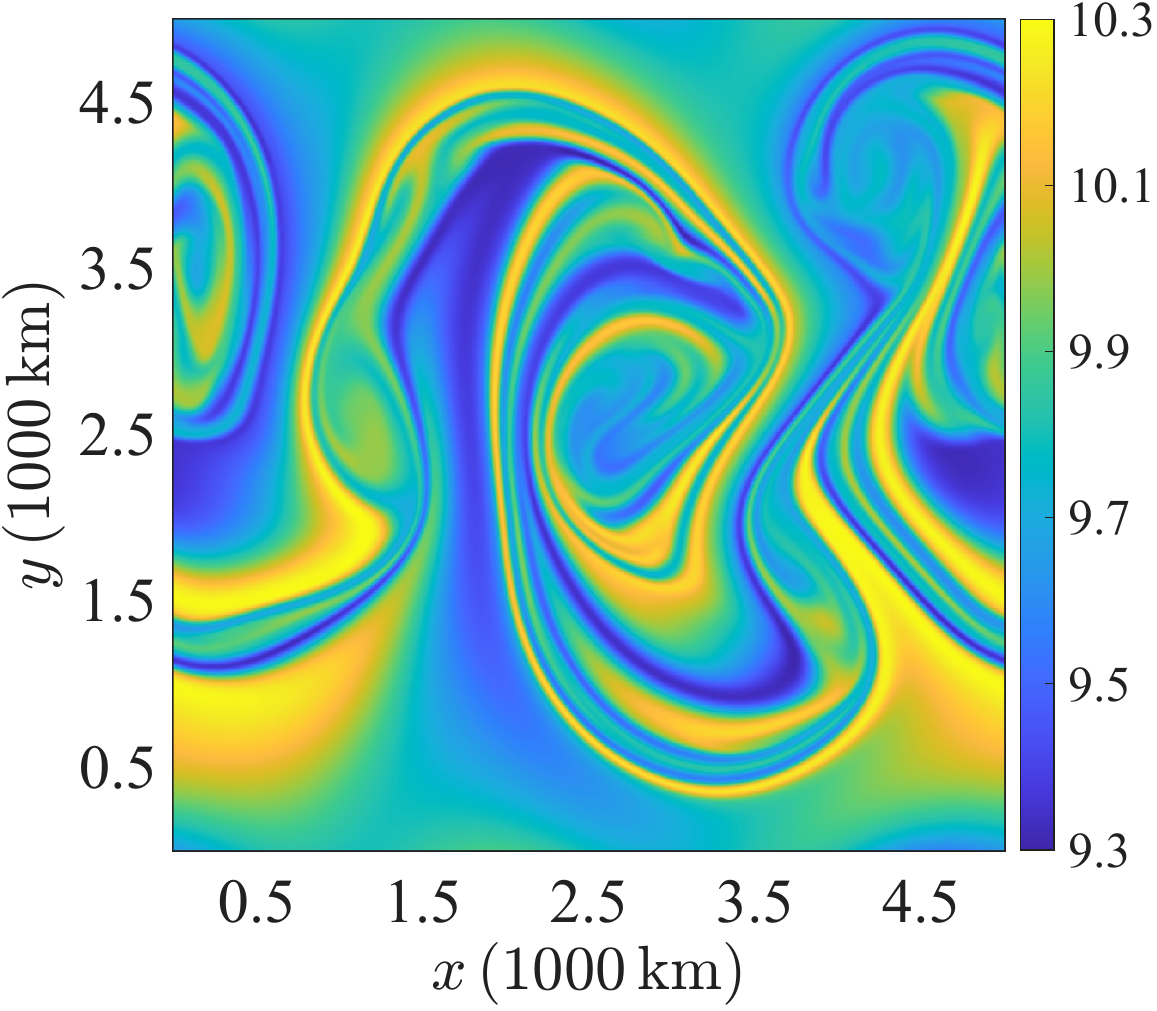}
\caption{\sf Example 5.4: The same as in Figure \ref{Fig6}, but for the buoyancy field $\Theta$.\label{Fig7}}
\end{figure}
	
In addition, we compare the efficiency of the AP DF-FV, Explicit, and WENO schemes. To this end, we measure the CPU times consumed by these 
schemes on the $600\times600$ mesh simulation and include the obtained results together with the total number of time steps and average
cost per time step in the first three rows of Table \ref{table4}. As one can see, the fifth-order WENO scheme is the most computationally 
expensive among the three studied schemes, whereas the proposed AP DF-FV method is the most efficient one. We also observe that the AP DF-FV
results are more accurate than the Explicit results, indicating that the AP DF-FV method clearly outperforms its Explicit counterpart.
\begin{table}[!ht]
\centering
\begin{tabular}{|c|c|c|c|c|}
\hline
method&mesh&number of time steps&average cost per time step (s) &CPU time (s)\\ 
\hline
AP DF-FV&$600\times600$&11290&0.764&8625\\ 
\hline
Explicit&$600\times600$&50707&0.234&11880\\ 
\hline
WENO&$600\times600$&50707&4.159&210898\\ 
\hline
Explicit&$540\times540$&45668&0.189&8644\\
\hline		
WENO&$240\times240$&20263&0.417&8454\\
\hline
\end{tabular}
\caption{\sf Example 5.4: The total number of time steps, average cost per time step, and CPU times consumed by the AP DF-FV, Explicit, and 
WENO methods on different meshes.\label{table4}}
\end{table}
	
In order to better compare the studied three schemes, we compute solutions using the Explicit and WENO schemes on coarser meshes with 
$540\times540$ and $240\times240$ cells, respectively. On these meshes, the CPU times consumed by these two explicit schemes are about the 
same as the CPU time consumed by the AP DF-FV method on the original $600\times600$ mesh (see rows $1$, $4$, and $5$ in Table \ref{table4}).
Notice that now the average cost per time step is the highest for the AP DF-FV method, but it is compensated by its less severe time-step
restriction compared with its explicit counterparts.
	
In Figure \ref{Fig8}, we plot the vorticity $\omega$ and buoyancy $\Theta$ computed by the AP DF-FV, Explicit, and WENO schemes on the
meshes indicated in rows $1$, $4$, and $5$ in Table \ref{table4}. As one can see, the AP DF-FV results are the sharpest, clearly
demonstrating the superiority of the proposed AP DF-FV method in the TQG regime.
\begin{figure}[ht!]
\centering
\includegraphics[height=0.18\textheight]{ShearFlow/DFFV-omega-600Cells-10d}\hspace{5mm}
\includegraphics[height=0.18\textheight]{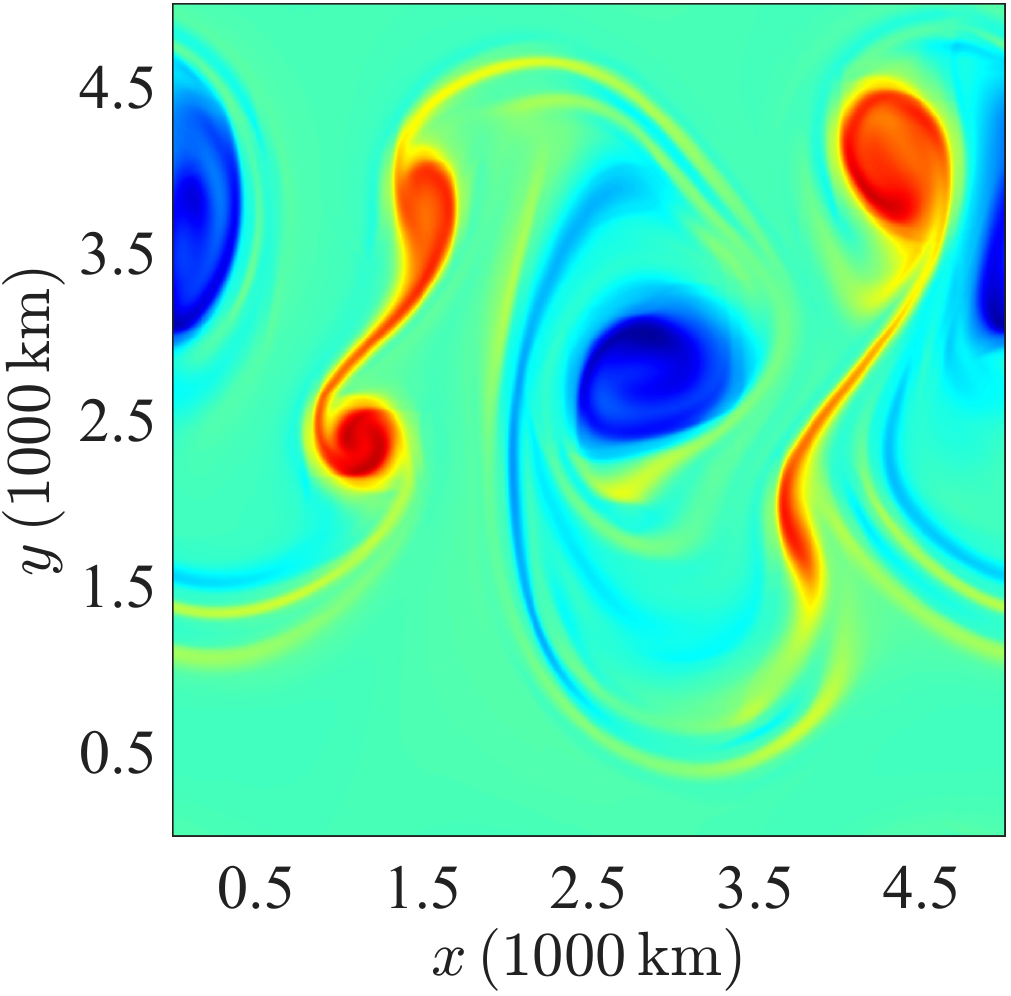}\hspace{5mm}
\includegraphics[height=0.18\textheight]{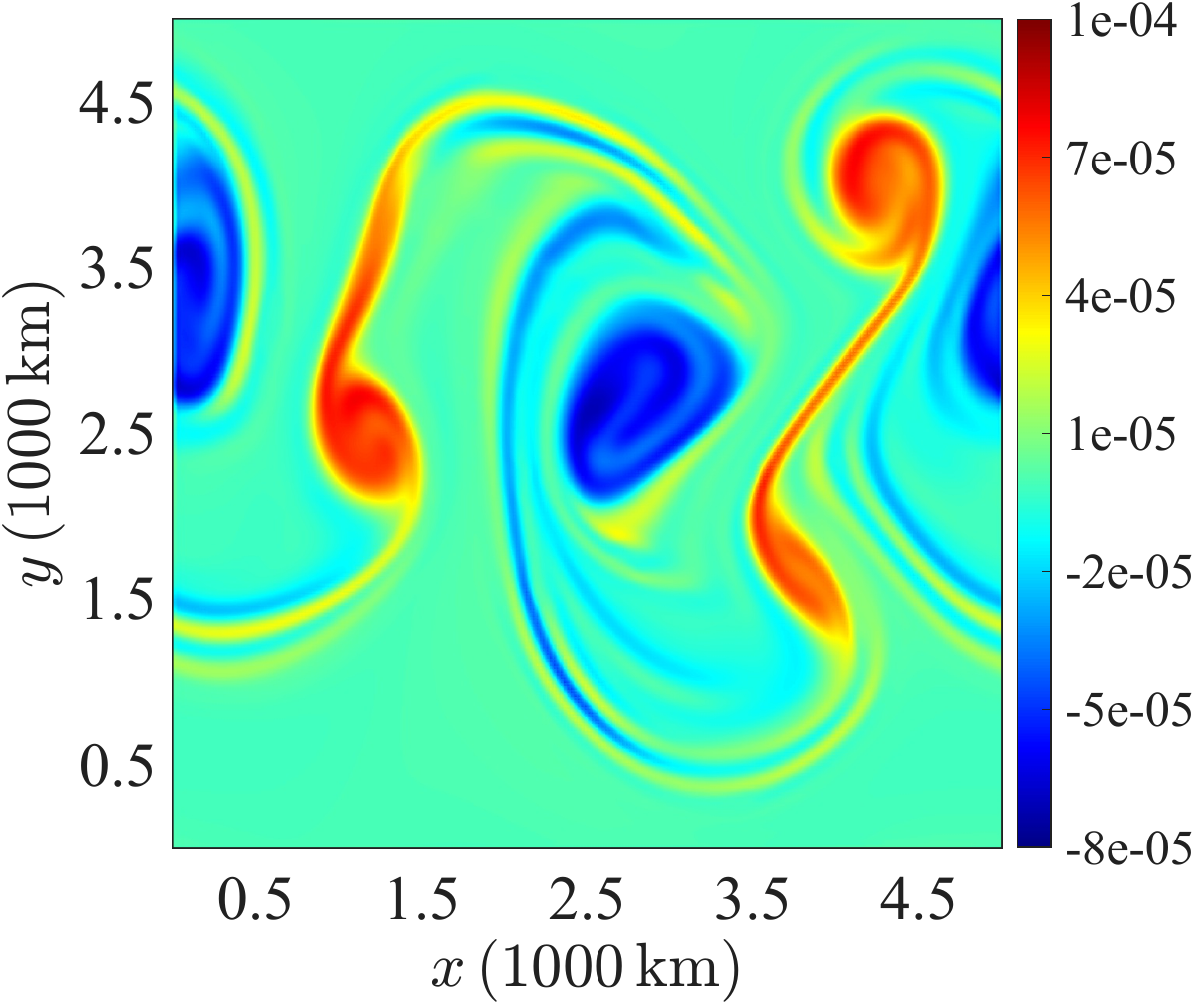}\\[2ex]
\hspace*{-0.25cm}\includegraphics[height=0.18\textheight]{ShearFlow/DFFV-Theta-600Cells-10d}\hspace{5mm}
\includegraphics[height=0.18\textheight]{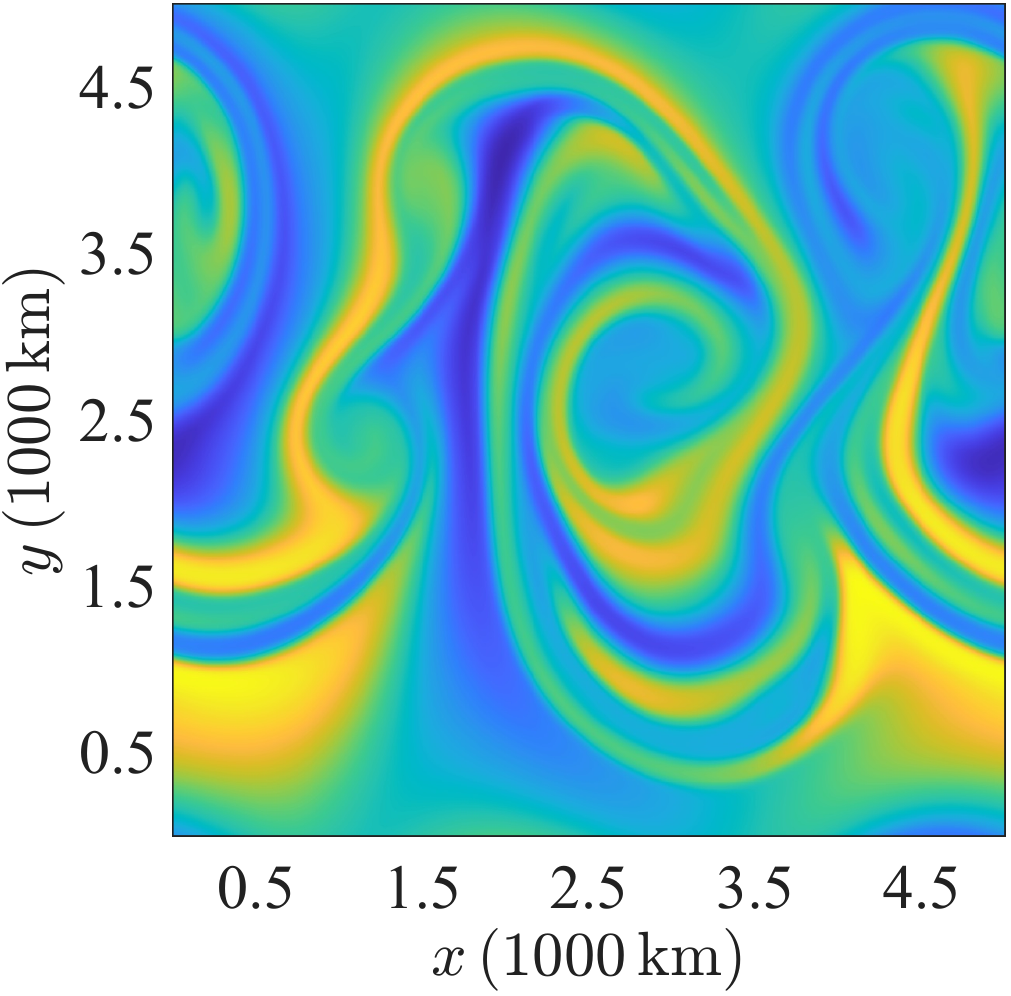}\hspace{5mm}
\includegraphics[height=0.18\textheight]{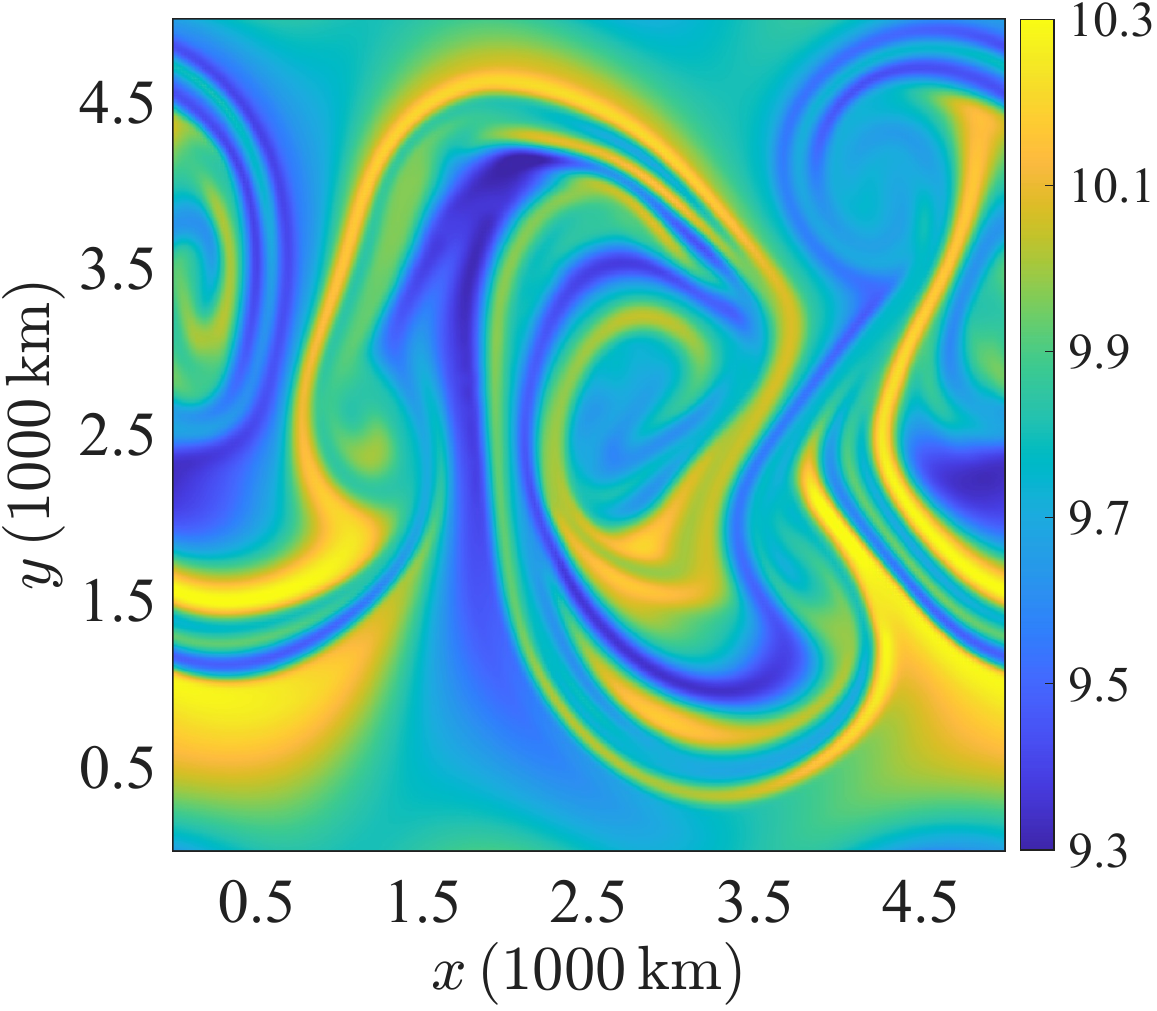}
\caption{\sf Example 5.4: Snapshots of the vorticity $\omega$ (top row) and buoyancy $\Theta$ (bottom row) computed by the AP DF-FV (left
column), Explicit (middle column), and fifth-order WENO (right column) methods on the meshes indicated in rows $1$, $4$, and $5$ in Table
\ref{table4}. On these meshes, all studied methods require approximately the same CPU time.\label{Fig8}}
\end{figure}
\end{example}
	
\begin{example}({\bf Anticyclonic Propagation in the $\beta$-Plane})
In the final example, we consider an initially symmetric vortex propagating westward in the $\beta$-plane with $f(y)=f_0+\beta y$ with
$f_0=6.1635\times10^{-5}\,{\rm s^{-1}}$ and $\beta=2.0746\times10^{-11}\,{\rm m^{-1}s^{-1}}$. The setup is based on the example studied in
\cite{LeRoux_2008,Zhang_2026} for the RSW model. The gravitational acceleration is $\g=9.81\,{\rm m/s^2}$. The mean depth of the fluid layer
is $H_0=163.1\,{\rm m}$. The initial data, prescribed in the computational domain $[-L_x,L_x]\times[-L_y,L_y]$ subject to free boundary
conditions, are
\begin{equation*}
\begin{aligned}
&h(x,y,0)=H_0+A\,{\rm e}^{-\frac{x^2+y^2}{D^2}},\quad&&u(x,y,0)=\frac{2A\g}{f_0+\beta y}\,\frac{y}{D^2}\,{\rm e}^{-\frac{x^2+y^2}{D^2}},\\
&\Theta(x,y,0)=\g\left(1-\frac{A}{H_0}\,{\rm e}^{-\frac{x^2+y^2}{D^2}}\right),\quad
&&v(x,y,0)=-\frac{2A\g}{f_0+\beta y}\,\frac{x}{D^2}\,{\rm e}^{-\frac{x^2+y^2}{D^2}},
\end{aligned}
\end{equation*}
where $A=0.95\,{\rm m}$, $D=130\,{\rm km}$, $L_x=1000\,{\rm km}$, and $L_y=600\,{\rm km}$. To nondimensionalize the problem, we take the
reference values $L_0=1000\,{\rm km}$, $\Theta_0=9.81\,{\rm m/s^2}$, and $V_0=1\,{\rm m/s}$. Using these scales, the corresponding
dimensionless parameters $\Ro$, $\bar\beta$, and $\Bu$ become
\begin{equation*}
\Ro=\eps=\frac{V_0}{L_0f_0}\approx0.016,\quad\bar\beta=20.746,\quad\Bu=\nu=\frac{\Theta_0H_0}{\big(L_0f_0\big)^2}\approx0.421,
\end{equation*}
indicating that the flow is in the TQG regime. We note that in the Northern Hemisphere ($f_0>0$), the secondary vorticity structure induced
by the $\beta$-effect causes the cyclonic vortex to follow a curved southwestward trajectory.
	
We compute the solution by the proposed AP DF-FV method on three uniform meshes with $400\times400$, $600\times600$, and $900\times900$
cells. The buoyancy field $\Theta$ obtained at times $t=20\,{\rm d}$ and $30\,{\rm d}$ are shown in Figure \ref{Fig9}. As one can observe,
the proposed AP DF-FV method robustly captures the characteristic features of the $\beta$-drift, including the secondary circulation, the
Rossby wave tail, and the southwestward migration of the vortex core across all mesh resolutions. Most notably, the mesh refinement study 
indicates strong convergence of the proposed AP DF-FV method. To further demonstrate this, we plot the 1-D slices of $\Theta$ along $x=0$ at
times $t=20\,{\rm d}$ and $30\,{\rm d}$ in Figure \ref{Fig10}. These results demonstrate the high accuracy, robustness, and mesh-consistent
convergence of the proposed AP DF-FV method in capturing complex geophysical vortex dynamics.
\begin{figure}[ht!]
\centering
\includegraphics[height=0.18\textheight]{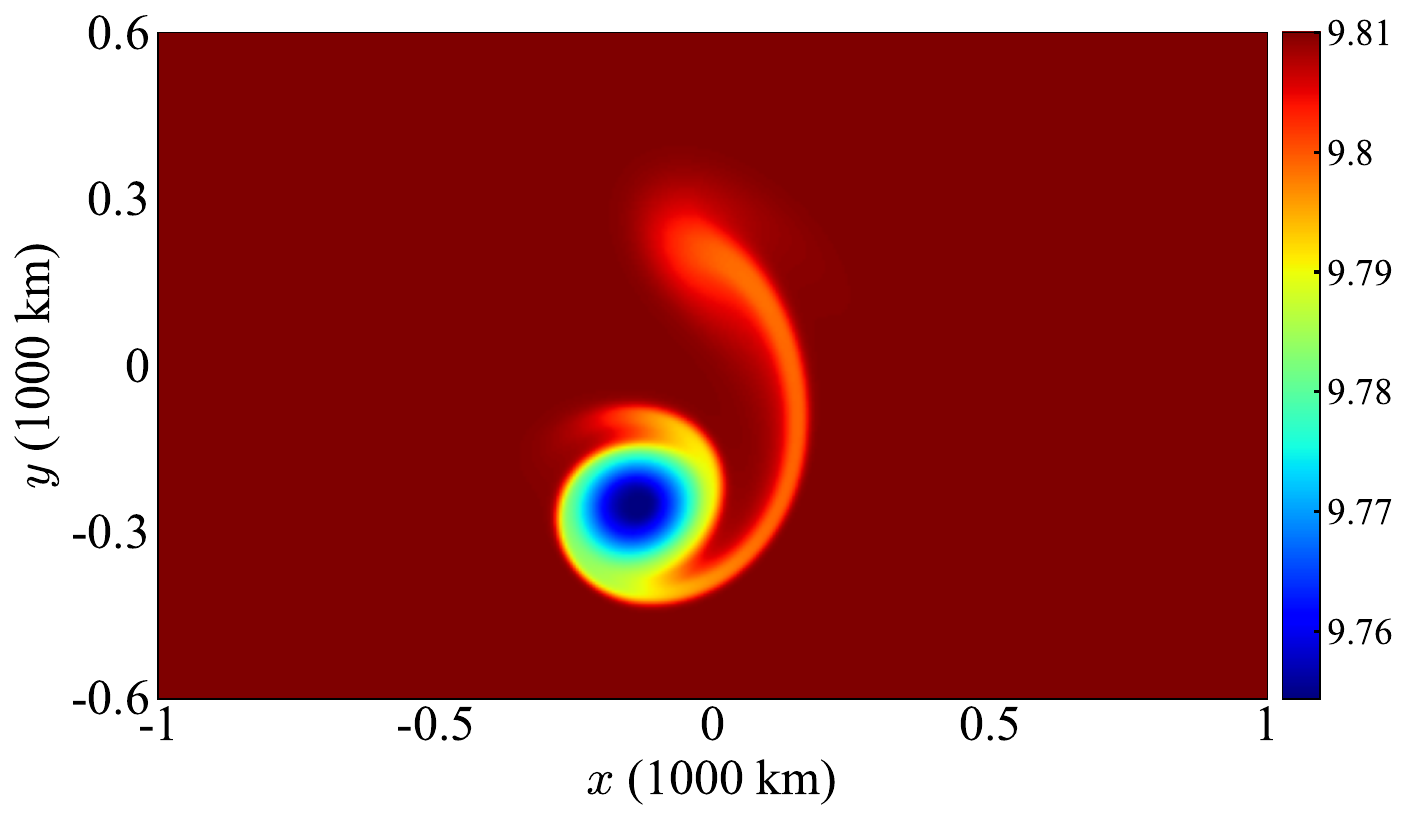}\hspace{5mm}
\includegraphics[height=0.18\textheight]{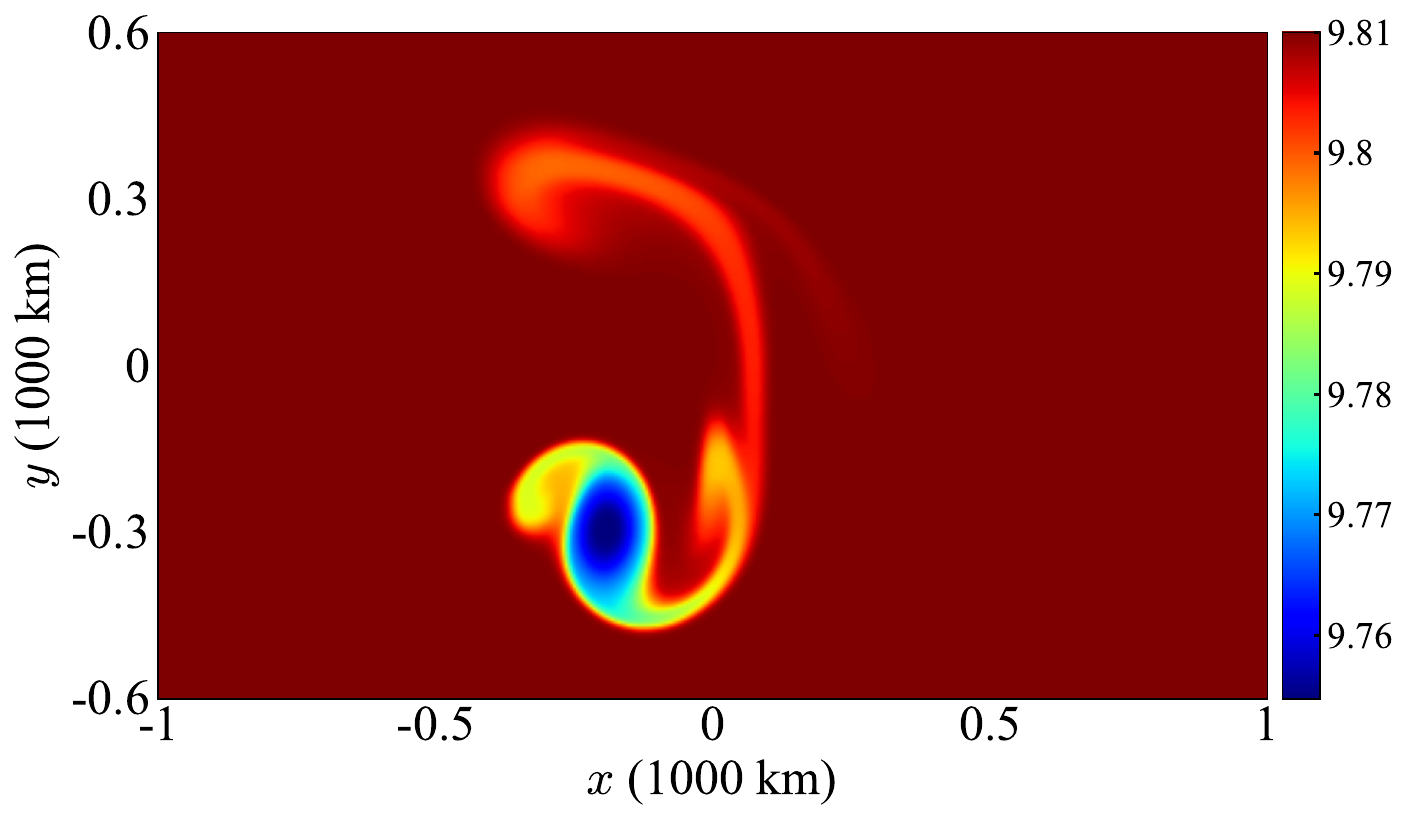}\\[1.2ex]
\includegraphics[height=0.18\textheight]{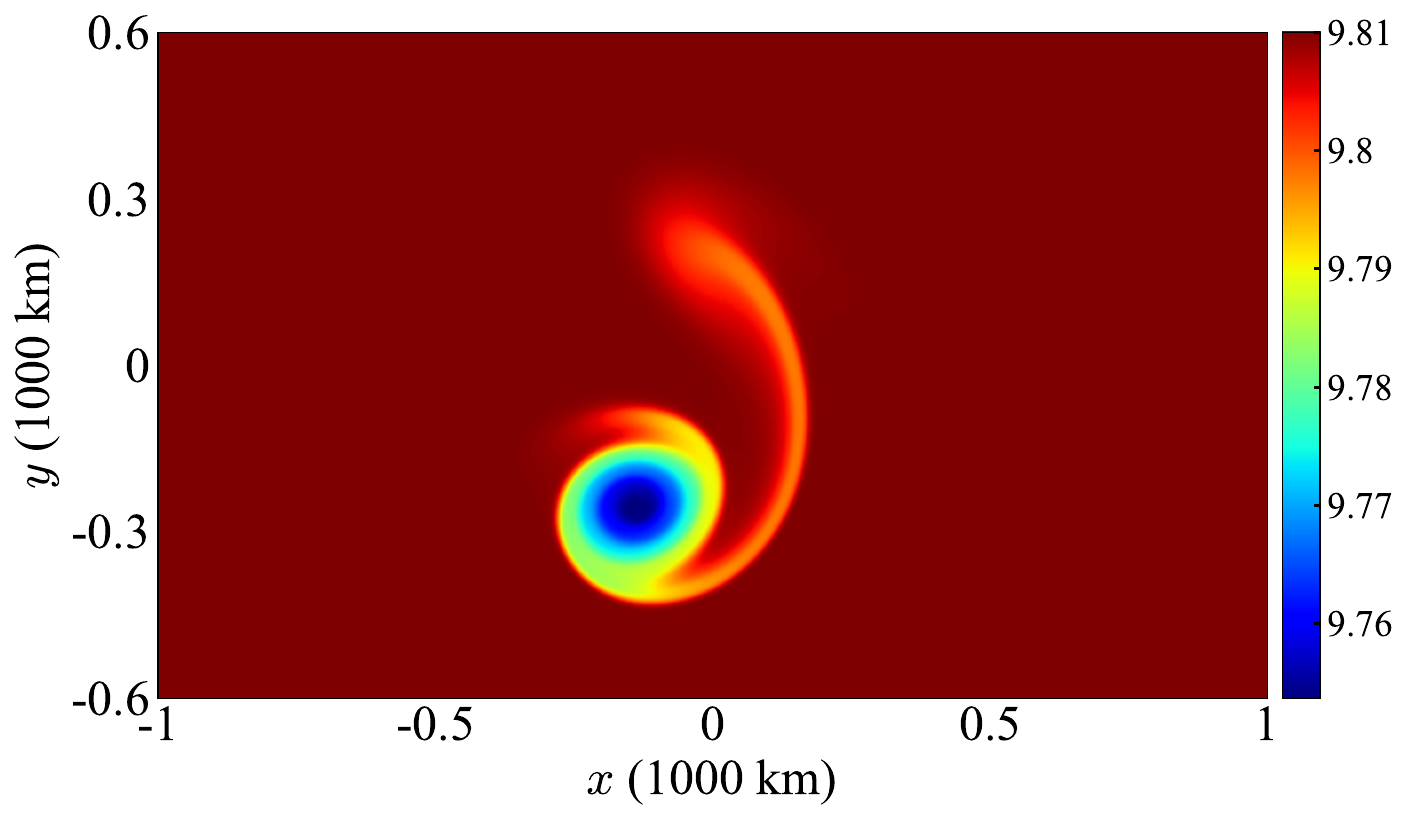}\hspace{5mm}
\includegraphics[height=0.18\textheight]{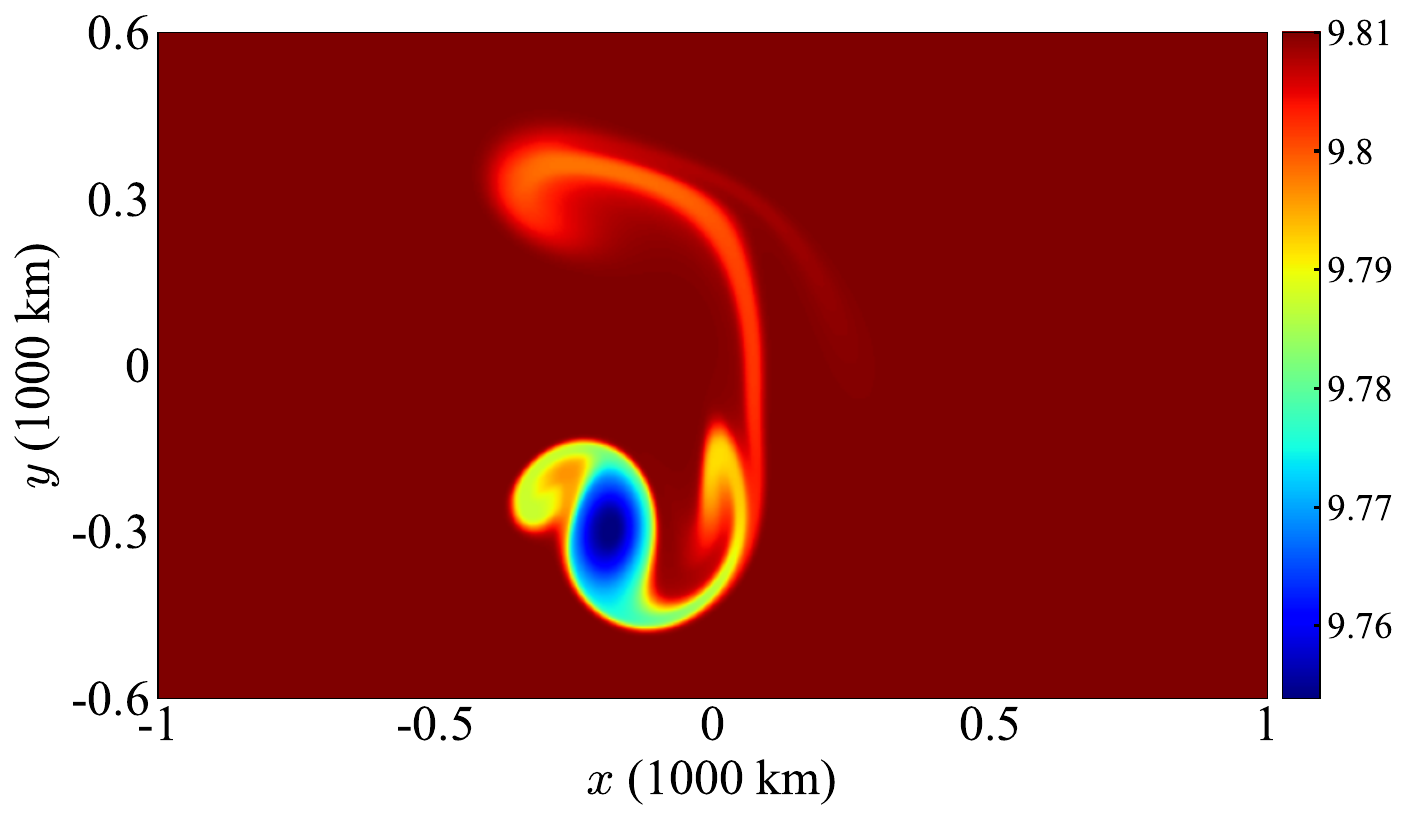}\\[1.2ex]
\includegraphics[height=0.18\textheight]{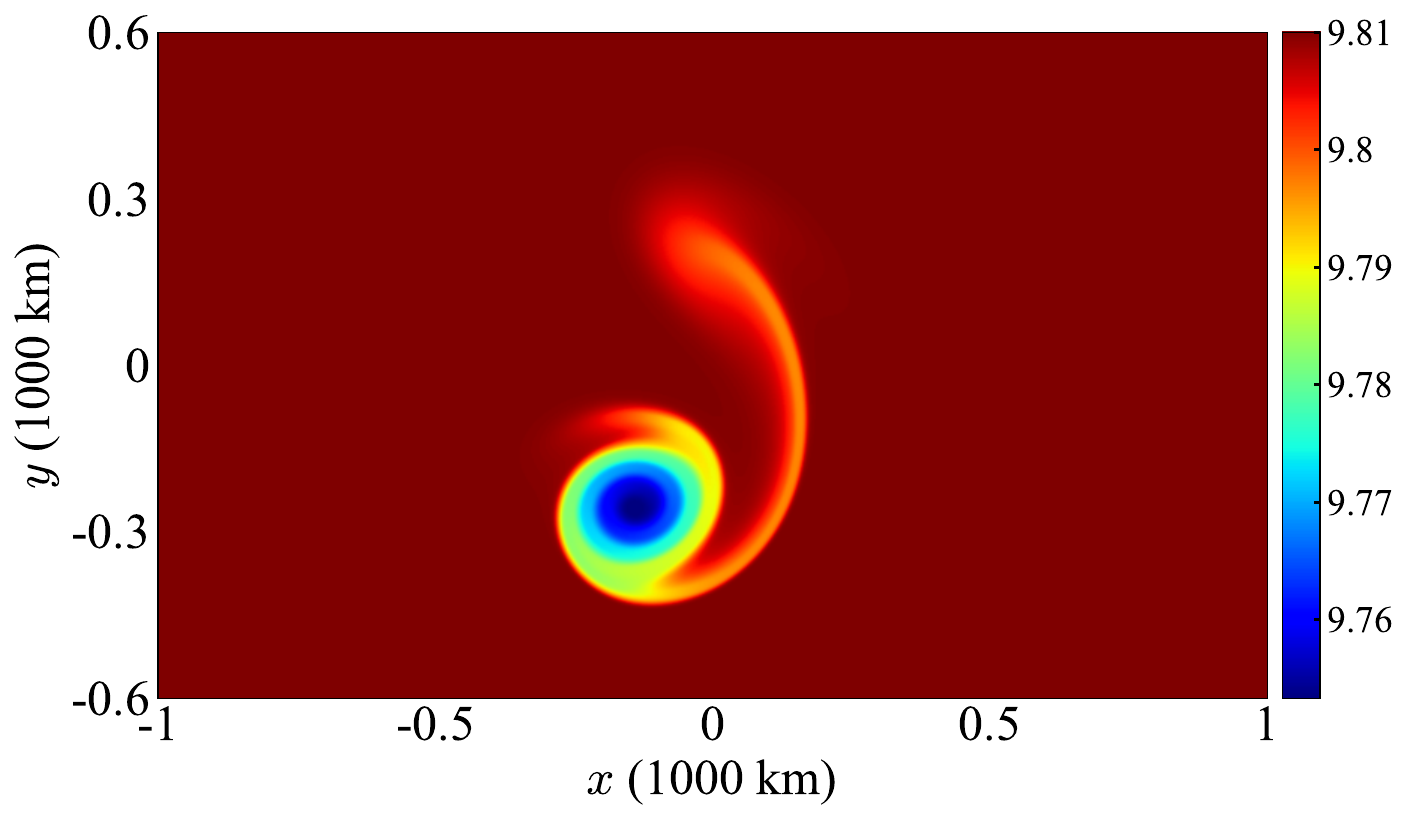}\hspace{5mm}
\includegraphics[height=0.18\textheight]{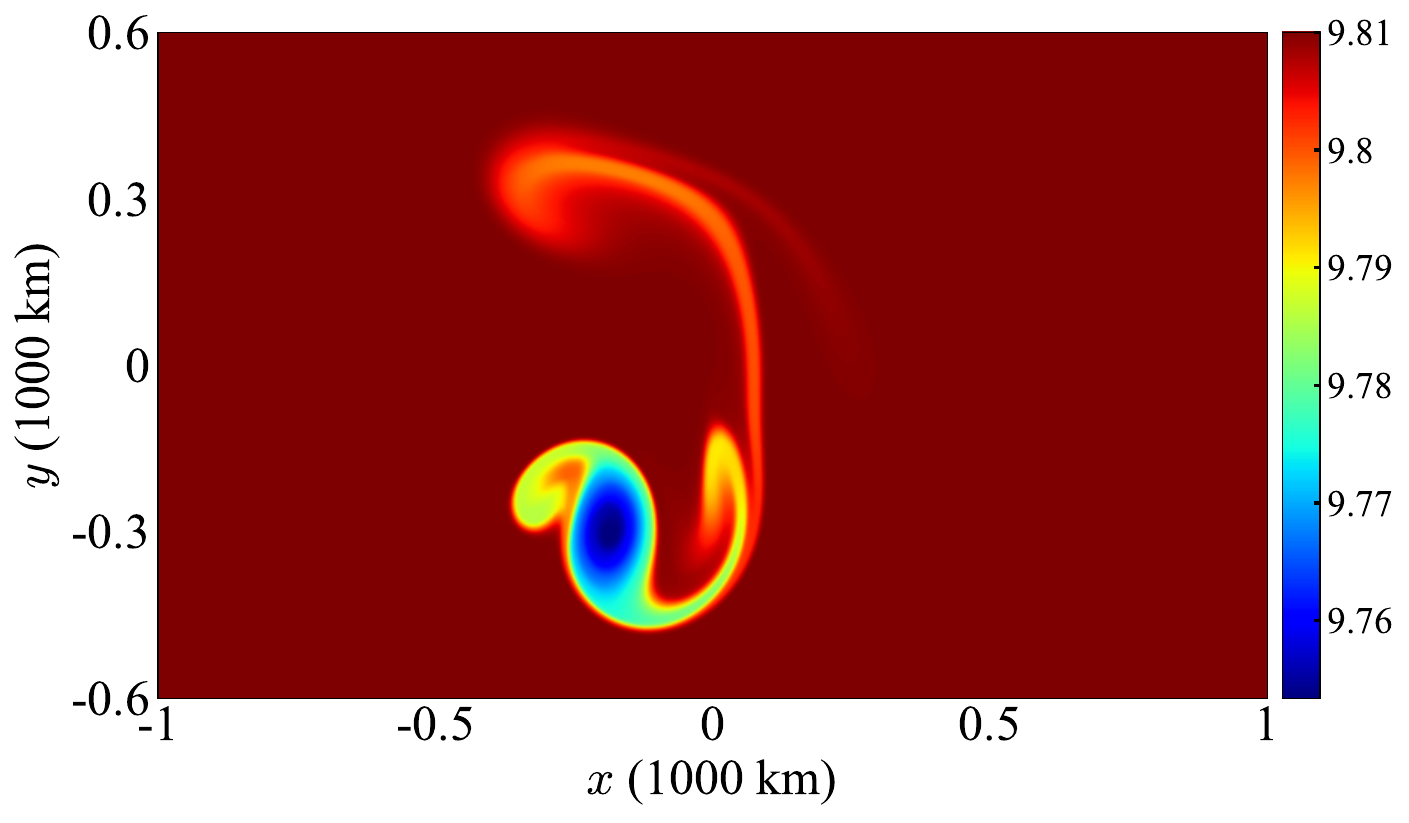}
\caption{\sf Example 5.5: Time snapshots of the buoyancy field $\Theta$ computed by the proposed AP DF-FV method on $400\times400$ (top
row), $600\times600$ (middle row), and $900\times900$ (bottom row) meshes at times $t=20\,{\rm d}$ (left column) and $30\,{\rm d}$ (right
column).\label{Fig9}}
\end{figure}
\begin{figure}[ht!]
\centering
\includegraphics[height=0.22\textheight]{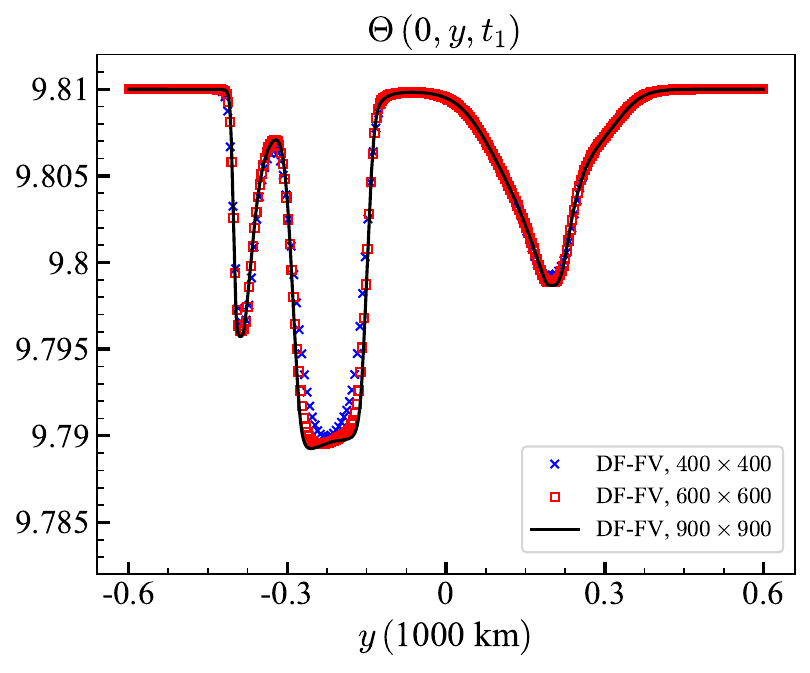}\hspace{1cm}
\includegraphics[height=0.22\textheight]{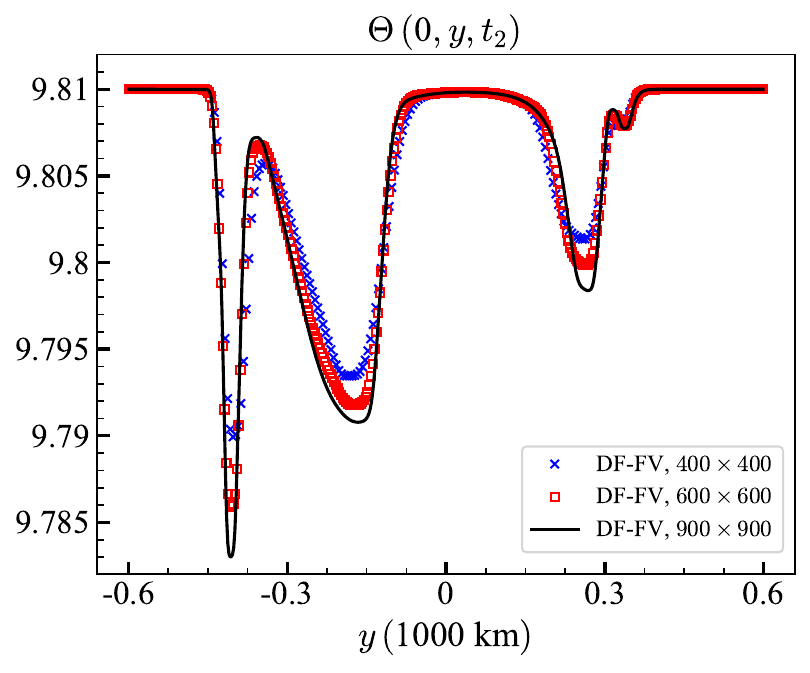}
\caption{\sf Example 5.5: 1-D slices of the $\Theta$ along $x=0$ at $t_1=20\,{\rm d}$ (left) and $t_2=30\,{\rm d}$ (right).\label{Fig10}}
\end{figure}
\end{example}

\section{Conclusions}\label{sec6}	 
This paper presents a new approach for solving the nondimensional TRSW equations using an AP DF-FV method that performs effectively across a
broad range of Rossby numbers. This method effectively combines the asymptotic consistency of the primitive formulation in low-Rossby-number
regimes with the shock-capturing robustness of the conservative formulation in high-Rossby-number regimes. To develop a formulation that is
both easily solvable and AP for the primitive system, the proposed AP DF-FV method augments the system with the nonstiff potential vorticity
equation. In addition, a post-processing is introduced to couple the solutions of the primitive and conservative systems, resulting in a
method that can accurately handle all-Rossby-number regimes.

The developed AP DF–FV method has been tested on several numerical examples illustrating that the proposed scheme is capable of accurately
resolving shock waves without spurious oscillations in high-Rossby-number regimes, achieving efficiency comparable to that of explicit
methods. At the same time, it effectively preserves the correct asymptotic limits and exhibits significantly improved computational
efficiency in low-Rossby-number regimes when compared with explicit schemes.

\begin{acknowledgment}
The work of A. Chertock was supported in part by NSF grant DMS-2208438. The work of A. Kurganov was supported in part by NSFC grant
W2431004. The work of L. Micalizzi was supported in part by the LeRoy B. Martin, Jr. Distinguished Professorship Foundation.
\end{acknowledgment}
	
\bibliographystyle{siamnodash}
\bibliography{TRSW}
	
\end{document}